\documentclass{article}

\usepackage[T1]{fontenc}
\usepackage[utf8]{inputenc}
\usepackage[english]{babel}
\usepackage[intlimits]{amsmath}
\usepackage{amsthm}
\usepackage{amssymb}
\usepackage{mathtools} 

\usepackage{tikz-cd}

\usepackage{lmodern}
\usepackage{bm} 
\usepackage{enumerate}

\usepackage[margin=1in]{geometry} 
\usepackage[hidelinks,pdfpagemode=UseNone,pdfstartview=FitH]{hyperref}
\usepackage{url}
\usepackage[autostyle]{csquotes}
\usepackage[safeinputenc, doi=false]{biblatex}
\theoremstyle{plain}
\newtheorem{theorem}{Theorem}[section]
\newtheorem{proposition}[theorem]{Proposition}
\newtheorem*{theorem*}{Theorem}
\newtheorem{corollary}[theorem]{Corollary}

\newenvironment{introcorollary}[1]
  {\par\addvspace{\topsep}\noindent
   \textbf{Corollary~\ref{#1}.}\itshape}
  {\par\addvspace{\topsep}}

\newenvironment{introtheorem}[1]
  {\par\addvspace{\topsep}\noindent
   \textbf{Theorem~\ref{#1}.}\itshape}
  {\par\addvspace{\topsep}}
  
\newtheorem{lemma}[theorem]{Lemma}

\theoremstyle{definition}

\newtheorem{remark}[theorem]{Remark}

\allowdisplaybreaks[1]

\newcommand*{\Rset}{\mathbb{R}}  
  
\newcommand*{\abs}[1]{\left\lvert#1\right\rvert}   % itseisarvo
\newcommand\twoheaddownarrow{\mathrel{\rotatebox[origin=c]{270}{$\twoheadrightarrow$}}}

\newcommand{\Vect}{\mathbf{Vect}}

\newcommand{\Mod}{\mathbf{Mod}}
\newcommand{\posetC}{\mathbf{P}}

\newcommand{\Sh}{\mathbf{Sh}}
 
\newcommand*{\Eph}{\mathbf{Eph}}

\newcommand{\Top}{\mathbf{Top}}
\newcommand{\SobC}{\mathbf{Sob}}

\DeclareMathOperator{\id}{id}
\DeclareMathOperator{\Hom}{Hom}

\DeclareMathOperator{\Fun}{Fun}
\DeclareMathOperator{\Open}{Open}
\DeclareMathOperator{\res}{res}

\DeclareMathOperator{\ImF}{Im}

\DeclareMathOperator{\Supp}{Supp}

\DeclareMathOperator{\supp}{supp}

\DeclareMathOperator{\Int}{Int}

\DeclareMathOperator{\Sob}{Sob}
\DeclareMathOperator{\InjSpec}{InjSpec}
\DeclareMathOperator{\Idl}{Idl}
\DeclareMathOperator{\Sk}{Sk}
\DeclareMathOperator{\Sp}{Sp}
\DeclareMathOperator{\Zg}{Zg}
\DeclareMathOperator{\ASupp}{ASupp}
\DeclareMathOperator{\ASpec}{ASpec}
\DeclareMathOperator{\sky}{sky}
\DeclareMathOperator{\fp}{fp}

\begin{document}

\title{Gabriel Spectrum of Persistence Categories}

\author{
Manu Harsu \\ manuh.research@gmail.com \and 
Eero Hyry \\ eero.hyry@tuni.fi}

\maketitle

%%%%%%%%%%%%%%%%%%%%%%%%%%%%%%%%%%%%%%%%%%%%%%%%%%%%%%%%

\begin{abstract}

\noindent

We determine the Gabriel spectrum of a category of sheaves of vector spaces in purely topological terms. For every topological space $X$, we prove that the Gabriel spectrum of $\Sh(X)$ is homeomorphic to $\Sk(\Sob(X))$, where $\Sob(X)$ denotes the sobrification of $X$ and $\Sk$ indicates passage to the Skula topology. The proof is based on a classification of indecomposable injective sheaves and a characterization of localizing subcategories in terms of Skula-open subsets. We show that the Gabriel spectrum is always Hausdorff, zero-dimensional, and totally disconnected, and that it is compact if and only if $X$ is Noetherian. This leads to computations of Gabriel spectra arising in persistence theory. In particular, the Gabriel spectrum of the category of persistence modules over $\Rset^n$ is homeomorphic to the space of ideals of $\Rset^n$ with a natural topology.

\end{abstract}

%\tableofcontents

%%%%%%%%%%%%%%%%%%%%%%%%%%%%%%%%%%%%%%%%%%%%%%%%%%%%%%%%%
\section{Introduction}

One of the central achievements of topological data analysis is the barcode representation of 
one-parameter persistence modules. The barcode theorem shows that pointwise finite-dimensional persistence modules indexed by the ordered real line decompose into interval modules, allowing algebraic information to be encoded by collections of intervals. This interval geometry lies at the heart of the success of persistent homology. 

In the multiparameter setting, however, interval decompositions no longer exist in general. Consequently, much recent work has focused on alternative structures for describing persistence modules, many of which are formulated in terms of rectangular regions of parameter space (see \cite{Botnan2023Introduction}, for example). The present article investigates a complementary direction. Rather than studying individual persistence modules, we study geometric structures associated with the ambient categories in which they reside.

A natural way to attach geometry to a Grothendieck category is through its Gabriel spectrum. Much as the prime spectrum encodes the geometry of a commutative ring, the Gabriel spectrum encodes the geometry of a Grothendieck category. If $\mathcal G$ is a Grothendieck category, then its Gabriel spectrum $\Sp(\mathcal G)$ is a topological space whose points are represented by indecomposable injective objects and whose topology reflects the localization theory of the category.

Let $P$ be a poset, and let $\posetC$ denote the thin category associated with $P$. Let $\Vect$ be the category of vector spaces over a field $k$. A persistence module over $P$ is a covariant functor  $\posetC\to\Vect$. Thus the category of persistence modules is the functor category $\Fun(\posetC, \Vect)$. It is well known that \[ \Fun(\posetC, \Vect)\simeq \Sh(P^a), \] where $P^a$ denotes $P$ endowed with its Alexandrov topology. 

Assume now that $P$ is a continuous poset in the sense of the domain theory. In \cite[Theorem 3.18]{EphModAndScottShOverContPoset}, we showed the categories of upper and lower semi-continuous persistence modules are both equivalent to the category of sheaves $\Sh(P^\sigma)$, where $P^\sigma$ denotes $P$ equipped with the Scott topology. Moreover, Theorem 4.17 of \cite{EphModAndScottShOverContPoset} identifies this category as the localization \[ \Sh(P^\sigma) \simeq \Sh(P^a)/\Eph, \] where $\Eph$ denotes the subcategory of ephemeral persistence modules. 

The realization of persistence categories as sheaf categories suggests investigating a more general question: 
\begin{quote}
For a topological space $X$, can the Gabriel spectrum of the category $\Sh(X)$ of sheaves of vector spaces on $X$ be described directly in terms of the topology of $X$? 
\end{quote}
The main result of this article provides a complete answer.

\begin{introcorollary}{Cor:GabrielIsSkulaSob}\emph{ (Main Theorem)}
For every topological space $X$, there is a natural homeomorphism \[ \Sp(\Sh(X))  \approx  \Sk(\Sob(X)), \] where $\Sob(X)$ denotes the sobrification of $X$ and 
$\Sk$ indicates passage to the Skula topology.
\end{introcorollary}
This result provides a purely topological description of the Gabriel spectrum. It involves two classical topological constructions that may be less familiar to readers from topological data analysis. 
The first is the sobrification $\Sob(X)$ of a topological space $X$. The points of $\Sob(X)$ are naturally identified with the irreducible closed subsets of $X$. From the viewpoint of sheaf theory, this is a natural construction because sheaves cannot distinguish a space from its sobrification. One may therefore think of $\Sob(X)$ as a canonical sober replacement of $X$ that preserves the sheaf theory of the space. The second construction is the Skula topology. This is obtained by refining the topology of a space so that every closed subset becomes open as well. Equivalently, it is the smallest topology containing both the open sets of the original space and their complements.

In order to proof the Main Theorem we first identify the points of the spectrum in Theorem \ref{Thm:IndInjAreSkysrc}. Let $X$ be a sober topological space. A sheaf $\mathcal{F}\in\Sh(X)$  is indecomposable injective if and only if $\mathcal{F}\cong\sky(x)$ for some point $x\in X$. Here $\sky(x)$ denotes the skyscraper sheaf at $x$ with value $k$.  
Since the points of $\Sob(X)$ are precisely the irreducible closed subsets of $X$, it follows that for an arbitrary topological space $X$ there is a natural bijection between indecomposable injective sheaves on $X$ and points of $\Sob(X)$. This result may be viewed as a sheaf-theoretic analogue of Höppner's classification of indecomposable injective persistence modules (see Remark \ref{Rmk:Hoppner}). 

The second ingredient of the proof is a description of the topology. The preceding theorem yields a natural bijection $\Sob(X)\to \Sp(\Sh(X))$. We then show that under this bijection the Gabriel topology coincides with the Skula topology. For this, we prove in Proposition~\ref{Prop:SkulaOpenCorrespondsSupports} that the Skula topology admits a direct categorical interpretation in terms of localization theory. Specifically, if $X$ is a topological space and $Z\subseteq X$, then $Z$ is Skula-open if and only if there exists a localizing subcategory $\mathcal L\subseteq \Sh(X)$ such that \[ Z=\Supp(\mathcal L) \coloneq \bigcup_{\mathcal F\in\mathcal L}\Supp(\mathcal F). \] 

The spectral spaces arising in this way enjoy strong topological properties. We prove in Theorem \ref{Thm:GabrielSpectrumTopProperties} that, for every topological space $X$, the Gabriel spectrum $\Sp(\Sh(X))$ is Hausdorff and admits a basis of clopen subsets. Consequently, it is totally disconnected and zero-dimensional in the sense of small inductive dimension. 
For topological spaces $X$ and $Y$ there is a natural homeomorphism \[ \Sp(\Sh(X\times Y)) \approx \Sp(\Sh(X))\times\Sp(\Sh(Y))\]
(Theorem \ref{Thm:ProductOfGabrielSpectrums}).
We also show that $\Sp(\Sh(X))$ is compact if and only if $X$ is Noetherian (Theorem \ref{Thm:GabrielSpecCompact}). Finally, if $X$ is Hausdorff, then $\Sp(\Sh(X))$ is discrete and has cardinality $\abs{X}$ (Theorem \ref{Thm:GabrielForHausdorff}).

We now return to persistence theory. Applying the Main Theorem to Scott spaces of continuous posets yields explicit descriptions of spectra of persistence categories. 

We first consider the Scott spectrum.

\begin{introtheorem}{Thm:GabrielSpecOfScottShOverR} 
Let $P=\Rset^n$ equipped with its coordinatewise order. Then 
there is an homeomorphism
\[ \Sp(\Sh((\Rset^n)^\sigma)) \approx (\Rset\cup\{\infty\})^n, \] where the right-hand side is endowed with the product Sorgenfrey topology. 
\end{introtheorem} 
The basic open subsets are the half-open rectangles $\left] x_1, y_1 \right] \times \dots \times \left] x_n, y_n \right]$. Thus the spectral topology is generated by the same class of rectangles that appears throughout multiparameter persistence theory. 

We also obtain an explicit description of the spectrum of ordinary persistence modules:

\begin{introtheorem}{Thm:GabrielSpecOfPerModOverR} 
Let $P=\Rset^n$. Then there is a homeomorphism \[ \Sp(\Sh(\Fun(\posetC, \Vect))) \approx \Idl(\Rset^n), \] where  $\Idl(\Rset^n)$ denotes the set of ideals of $\Rset^n$, endowed with the topology generated by the rectangles \[ \left]F_1,F_1'\right]\times\cdots\times \left]F_n,F_n'\right], \] where $F_i,F_i'\in\Idl(\Rset)$ for $i=1,\dots,n$. 
\end{introtheorem} 

Since every ideal of $\Rset^n$ is uniquely of the form $F_1\times\cdots\times F_n$, where each $F_i$ is an ideal of $\Rset$, the spectrum admits a particularly explicit description. Indeed, the ideals of $\Rset$ are precisely the intervals $]-\infty,x[$ and $]-\infty,x]$ for $x\in \Rset$ and $\Rset$ itself.

The relationship between ordinary persistence modules, Scott sheaves, and ephemeral modules is reflected at the level of spectra. We show in Remark~\ref{Rmk:Embedding} that, for a continuous poset $P$, the spectrum $\Sp(\Eph)$ is an open subspace of $\Sp(\Fun(\posetC,\Vect))$, while the complementary closed subspace is naturally homeomorphic to $\Sp(\Sh(P^\sigma))$. Thus the Scott spectrum is obtained from the spectrum of ordinary persistence modules by removing the open subset corresponding to ephemeral information.

Finally, we investigate the relationship between spectra and localization theory. For $P=\Rset\cup\{\infty\}$, we establish in Corollary \ref{Cor:LocalizingSubCatAndSkulaOpenR} natural bijections between localizing subcategories of $\Sh(P^\sigma)$, Skula-open subsets of $P^\sigma$, and Gabriel-open subsets of $\Sp(\Sh(P^\sigma))$. 
Consequently, the lattice of localizing subcategories of $\Sh(P^\sigma)$ can be recovered entirely from the topology of its Gabriel spectrum.

There are several notions of spectra associated with Grothendieck categories. For an arbitrary topological space $X$, we show in Theorem~\ref{Thm:AtomSpecHomeoGabriel} that the Gabriel spectrum of $\Sh(X)$ is naturally homeomorphic to the atom spectrum. Since it is easy to see that $\Sh(\Rset^\sigma)$ is neither locally coherent nor even locally finitely presented (see Proposition \ref{Prop:Easy}), we work throughout with the Gabriel spectrum, which does not require such assumptions.

For ordinary persistence modules, the situation is different. Lerch showed in \cite{LerchSpectrum} that the category of persistence modules $\Fun(\Rset,\Vect)$ is locally coherent. Endowing its injective spectrum with the Ziegler topology, he proved that the resulting space is homeomorphic to the space of ideals $\Idl(\Rset)$ equipped with the order topology. In particular, the Ziegler topology is strictly coarser than the Gabriel topology. For simplicity, we refer to this space as the Ziegler spectrum.

Since $\Sh(\Rset^\sigma)$ is not locally coherent (see Proposition \ref{Prop:Easy}), there is no genuine Ziegler spectrum associated with Scott sheaves on $\Rset^\sigma$. However, by \cite[Theorem~3.18]{EphModAndScottShOverContPoset}, the category $\Sh(\Rset^\sigma)$ is equivalent to the category of lower semi-continuous persistence modules $\Fun_c(\Rset,\Vect)$. We show in Corollary~\ref{Cor:OpenIndZieglerHomeoGabriel} that the subspace \[ \Zg(\Fun(\Rset,\Vect)) \cap \Fun_c(\Rset,\Vect) \] of the Ziegler spectrum is homeomorphic to the Gabriel spectrum $\Sp(\Sh(\Rset^\sigma))$.  Thus the Gabriel spectrum of Scott sheaves can be recovered as a natural subspace of the Ziegler spectrum of ordinary persistence modules.

%%%%%%%%%%%%%%%%%%%%%%%%%%%%%%%%%%%%%%%%%%%%%%%%%%%%%%%%%%%%%%%55
\section{Preliminaries on Spectra}

%%%%%%%%%%%%%%%%%%%%%%%%%%%%%%%%%%%%%%%%%%%%%%%%%%%%%%%%%%%%
\subsection{Injective Spectrum}

Throughout this section, let $\mathcal{G}$ be a Grothendieck category. 
Recall that a full subcategory $\mathcal{C}$ of $\mathcal{G}$ is called a \emph{Serre subcategory}, if for every short exact sequence $0 \to M' \to M \to M'' \to 0$, one has $M \in \mathcal{C}$ if and only if $M', M'' \in \mathcal{C}$. 
For every Serre subcategory $\mathcal{C}$, one can form the quotient category $\mathcal{G} / \mathcal{C}$. 
A Serre subcategory is \emph{localizing}, if the quotient functor $\mathcal{G} \to \mathcal{G} / \mathcal{C}$ admits a right adjoint, often called a \emph{section}. 
Equivalently, $\mathcal{C}$ is closed under arbitrary direct sums. 
A localizing subcategory is the torsion class of a hereditary torsion theory. 
In particular, there is an associated a torsion functor 
$t_{\mathcal{L}} \colon \mathcal{G} \to \mathcal{L}$. 
For a general reference, see \cite{popescu}. 

The collection of isomorphism classes of the indecomposable injective objects of $\mathcal{G}$ forms a set, since every indecomposable injective is isomorphic
to the injective envelope of a quotient of a generator of $\mathcal{G}$. 
This set is called the \emph{injective spectrum} of $\mathcal{G}$ and is denoted by $\InjSpec(\mathcal{G})$.

%%%%%%%%%%%%%%%%%%%%%%%%%%%%%%%%%%%%%%%%%%%%%%%%%%%%%%%%%%
\subsection{Gabriel Spectrum}

The first topology on the injective spectrum that we consider is the Gabriel topology.  
This topology has appeared, for example, in \cite{HerzogGabrielSpectrum} and \cite{hrbek2025mutationgabrielspectrum}. 
Before introducing the Gabriel spectrum, we need to recall that the localizing subcategories of $\mathcal{G}$ form a complete lattice. 
The collection of localizing subcategories of $\mathcal{G}$ is a set rather than a proper class. 
Indeed, a localizing subcategory is determined by the quotients of a fixed generator of $\mathcal{G}$ that it contains.
It is obvious that an arbitrary intersection of localizing subcategories is again a localizing subcategory. 
If $(\mathcal{L}_{\alpha})_{\alpha \in A}$ is a family of localizing subcategory of $\mathcal{G}$, then the torsion functor associated with their intersection is given by \[t_{\cap \mathcal{L}_{\alpha}}(M) = \bigcap_{\alpha \in A} t_{\mathcal{L}_{\alpha}}(M)\] for every $M \in \mathcal{G}$. 
Thus, the set of localizing subcategories of $\mathcal{G}$ form a poset that is ordered by inclusion, and all meets exist. 
Consequently, this poset is also a complete lattice. 
The join of localizing subcategories $\mathcal{L}_{\alpha}$ ($\alpha \in A$) is the smallest localizing subcategory containing every $\mathcal{L}_{\alpha}$, and is denoted by $\bigvee_{\alpha \in A} \mathcal{L}_{\alpha}$. 
By \cite[Lemma 2.2]{krause2024booleanspectrumgrothendieckcategory}, the corresponding torsion functor satisfies 
\[
  t_{\lor \mathcal{L}_{\alpha}}(M) = \sum_{\alpha \in A} t_{\mathcal{L}_{\alpha}}(M) 
\]
for every object $M \in \mathcal{G}$, and $t_{\lor \mathcal{L}_{\alpha}}(M) = 0$ if and only if $t_{\mathcal{L}_{\alpha}}(M) = 0$ for all $\alpha \in A$.

For a localizing subcategory $\mathcal{L}$ of $\mathcal{G}$, we write 
\[
  \mathcal{O}(\mathcal{L}) 
  = \{ E \in \InjSpec(\mathcal{G}) \mid \Hom(M, E) \not= 0 \text{ for some } M \in \mathcal{L} \} 
  = \{ E \in \InjSpec(\mathcal{G}) \mid t_{\mathcal{L}}(E) \not= 0 \}. 
\]
Obviously, $\mathcal{O}(\mathcal{G}) = \InjSpec(\mathcal{G})$ and $\mathcal{O}(0) = \emptyset$. 
Moreover, it follows immediately from the descriptions of the torsion functors associated with joins and meets that 
\[
\mathcal{O} \left(\bigvee_{\alpha \in A} \mathcal{L}_{\alpha} \right) 
= \bigcup_{\alpha \in A} \mathcal{O}(\mathcal{L}_{\alpha}) 
\quad \text{and} \quad 
\mathcal{O} \left(\bigcap_{\alpha \in A} \mathcal{L}_{\alpha} \right) 
\subseteq \bigcap_{\alpha \in A} \mathcal{O}(\mathcal{L}_{\alpha})
\]
for every family of localizing subcategories ($\mathcal{L}_{\alpha})_{\alpha \in A}$. 
For the finite intersections, the converse inclusion is also holds: $\mathcal{O}(\mathcal{L}_1) \cap \mathcal{O}(\mathcal{L}_2) = \mathcal{O}(\mathcal{L}_1 \cap \mathcal{L}_2)$ for localizing subcategories $\mathcal{L}_1$ and $\mathcal{L}_2$ of $\mathcal{G}$. 
This is because, if $t_{\mathcal{L}_1}(E) \not= 0$ and $t_{\mathcal{L}_2}(E) \not= 0$ for $E \in \InjSpec(\mathcal{G})$, then both are essential subobjects of $E$, so their intersection $t_{\mathcal{L}_1}(E) \cap t_{\mathcal{L}_2}(E)$ must also be non-zero. 
For more details, see \cite[Lemma 4.8]{PAPPACENA2002559}. 

Consequently, the collection $\{ \mathcal{O}(\mathcal{L}) \mid \mathcal{L} \text{ is a localizing subcategory of } \mathcal{G} \}$ forms a topology on $\InjSpec(\mathcal{G})$. 
It is called the \emph{Gabriel topology} of $\InjSpec(\mathcal{G})$, and the sets $\mathcal{O}(\mathcal{L})$ (where $\mathcal{L}$ is a localizing subcategory of $\mathcal{G}$) are called the \emph{basic Gabriel open sets}. 
The injective spectrum equipped with this topology is called the \emph{Gabriel spectrum} of $\mathcal{G}$.
To distinguish between a topological space and its underlying set, we write $\Sp(\mathcal{G})$ for the Gabriel spectrum of $\mathcal{G}$.

%%%%%%%%%%%%%%%%%%%%%%%%%%%%%%%%%%%%%%%%%%%%%%%%%%%%%%%%%%%
\subsection{Ziegler Spectrum}

Next, we endow the injective spectrum with the so-called Ziegler topology. 
For simplicity, we refer to the resulting topological space as the \emph{Ziegler spectrum}, although in the literature the term “Ziegler spectrum” usually refers to the pure-injective spectrum endowed  with the Ziegler topology. 
Note that this construction is defined only for locally coherent Grothendieck categories. 
For completeness, we recall the relevant terminology; for further details, see \cite{Herzog1997, Krause1997}.
An object $M \in \mathcal{G}$ is called \emph{finitely presented}, if the functor $\Hom(M, -)$ commutes with direct limits. 
The full subcategory of finitely presented objects is denoted by $\fp(\mathcal{G})$. 
If the isomorphism classes of $\fp(\mathcal{G})$ form a set and every object of $\mathcal{G}$ is a direct limit of finitely presented objects, then $\mathcal{G}$ is called a \emph{locally finitely presented category}. 
A locally finitely presented category $\mathcal{G}$ is \emph{locally coherent} provided that $\fp(\mathcal{G})$ is also Abelian. 

For rest of this subsection, we suppose that $\mathcal{G}$ is a locally coherent Grothendieck category. 
We follow \cite[Chapter 12]{KrauseHomTheoryRep}. 
For a class $\mathcal{C} \subseteq \fp(\mathcal{G})$ and for a subset $U \subseteq \InjSpec(\mathcal{G})$, define 
\begin{align*}
    \mathcal{C}^{\perp} &= \{ E \in \InjSpec(\mathcal{G}) \mid \Hom(C, E) = 0 \text{ for all } C \in \mathcal{C} \}, \\
    {}^{\perp} U &= \{ M \in \fp(\mathcal{G}) \mid \Hom(M, E) = 0 \text{ for all } E \in U \}. 
\end{align*}
The assignment $U \mapsto \overline{U} = ({}^{\perp}U)^{\perp}$, where $U \subseteq \InjSpec(\mathcal{G})$, defines a closure operator in the sense of Kuratowski. 
In particular, the subsets $U \subseteq \InjSpec(\mathcal{G})$ satisfying $U = \overline{U}$ form the closed subsets of a topology on $\InjSpec(\mathcal{G})$ (\cite[Lemma 12.1.12]{KrauseHomTheoryRep}). 
This topology is called the \emph{Ziegler topology}.

%%%%%%%%%%%%%%%%%%%%%%%%%%%%%%%%%%%%%%%%%%%%%%%%%%%%%%%%%%%
\subsection{Atom Spectrum}

In this subsection, we study another topological space called the atom spectrum. 
We begin by recalling some necessary terminology; see \cite{KandaAtomSpec1} and \cite{KandaAtomSpec3} for further background. 
Although many of the results discussed below hold more generally for Abelian categories, we restrict our attention to Grothendieck categories in order to avoid set-theoretic issues.

Let $\mathcal{G}$ be a Grothendieck category. A non-zero object $H \in \mathcal{G}$ is called \emph{monoform} if for every non-zero subobject $N$ of $H$, the objects $H$ and $H/N$ have no common non-zero subobject. Note that every non-zero subobject of a monoform object is again monoform \cite[Proposition 2.2]{KandaAtomSpec1}, and every monoform object is \emph{uniform} 
(or \emph{coirreducible}) meaning that any two non-zero subobjects have a non-zero intersection 
\cite[Proposition 2.6]{KandaAtomSpec1}. 
Two monoform objects are said to be \emph{atom-equivalent} if they admit a common non-zero subobject. 
Atom-equivalence defines an equivalence relation on the class of monoform objects \cite[Proposition 2.8]{KandaAtomSpec1}. 
We denote the equivalence class of a monoform object $H$ by $\overline{H}$ and call such an equivalence class an \emph{atom} of $\mathcal{G}$. 
The \emph{atom spectrum} $\ASpec(\mathcal{G})$ is the collection of all atoms of $\mathcal{G}$.
Since $\mathcal{G}$ is a Grothendieck category, the atom spectrum $\ASpec(\mathcal{G})$ is a set rather than a proper class \cite[Proposition 2.7]{KandaAtomSpec3}. 
The \emph{atom support} of an object $M \in \mathcal{G}$ is defined by
\[
  \ASupp(M) = \{ \overline{H} \mid H \text{ is a monoform subquotient of } M \}. 
\]
The collection $\{ \ASupp(M) \mid M \in \mathcal{G} \}$ forms a topology on $\ASpec(\mathcal{G})$ \cite[Proposition 3.2]{KandaAtomSpec3}. 
The atom spectrum will always be endowed with this topology. 

Next, we explain the relationship between the atom spectrum and the Gabriel spectrum. 
We denote the injective envelope of an object $M \in \mathcal{G}$ by $E(M)$. 
Two monoform objects $H, H' \in \mathcal{G}$ are atom-equivalent if and only if $E(H) \cong E(H')$ \cite[Lemma 5.8]{KandaAtomSpec1}. 
Furthermore, an object $M\in\mathcal G$ is uniform if and only if its injective envelope $E(M)$ is indecomposable \cite[p.~296--297]{popescu}. Consequently, the assignment $\overline H \mapsto E(H)$ induces a well-defined map $\overline E\colon \ASpec(\mathcal G)\to \InjSpec(\mathcal G)$. This map is always injective. Atom supports also admit a description in terms of injective envelopes. For every object $M\in\mathcal G$, \[ \ASupp(M) = \{\overline H\in\ASpec(\mathcal G)\mid \Hom(M,E(H))\neq 0\}. \]
See \cite[Lemma~3.2]{RezaAtomSpectrum} for details.

%%%%%%%%%%%%%%%%%%%%%%%%%%%%%%%%%%%%%%%%%%%%
\section{Sheaves}

%%%%%%%%%%%%%%%%%%%%%%%%%%%%%%%%%%%%%%%%%%%%%%%%%%
\subsection{Basic Notation}

For a general reference on sheaves, see \cite{SheavesOnManifolds}.
Let $X$ be a topological space and let $k$ be a field. 
We use the notation $\Sh(X)$ for the category of sheaves on $X$ with values in the category of $k$-vector spaces $\Vect$. 
It is well-known to be a Grothendieck category. 
Let $\mathcal{F}$ be a sheaf on $X$. 
The \emph{support} of $\mathcal{F}$ is the set $\Supp(\mathcal{F}) = \{ x \in X \mid \mathcal{F}_x \not= 0 \}$. 
If $U \subseteq X$ is an open subset, then the \emph{support} of a section $s \in \mathcal{F}(U)$ is  $\supp(s) = \{ x \in U \mid s_x \not= 0 \}$. 
Recall that the support of a section is a closed in $U$. 
In particular, it is locally closed subset of $X$ (i.e.\ an intersection of an open subset and a closed subset). 

Let $Z \subseteq X$ be a locally closed subset of a topological space $X$. 
Then every sheaf $\mathcal{F}$ on $Z$ admits an \emph{extension by zero}, which is a sheaf on $X$. 
If $i \colon Z \to X$ denotes the inclusion, we write this sheaf as $i_!\mathcal{F}$.  
Its restriction back to $Z$ (i.e.\ the sheaf $i^*i_! \mathcal{F}$) is $\mathcal{F}$ itself. 
Conversely, if $\mathcal{F}$ is a sheaf on $X$, then we define $\mathcal{F}_Z = i_!i^*\mathcal{F}$. 
If $Z' \subseteq X$ is another locally closed subset, then $(\mathcal{F}_Z)_{Z'} = \mathcal{F}_{Z \cap Z'}$. 
Moreover, if $Z$ is open, $\mathcal{F}_Z$ is a subsheaf of $\mathcal{F}$. 
If $k_X$ is the constant sheaf of $X$ (with the value $k$), we will denote by $k_Z$ the sheaf $(k_X)_Z$, for short. 
The subsheaves of $k_X$ are in one-to-one correspondence with the open subsets of $X$. 
It follows that the subsheaves of $k_Z$ are in one-to-one correspondence with the open subsets of $Z$. 
For further details, see \cite[Proposition 2.3.6]{SheavesOnManifolds}. 

Next, we prove a simple lemma. 
Recall that a topological space $X$ is called \emph{reducible} if it can be written as an union of two proper closed subsets, otherwise it is \emph{irreducible} (or \emph{hyperconnected}). 
Equivalently, $X$ is irreducible if and only if any two non-empty open subsets have non-empty intersection.

\begin{lemma}\label{Lemma:ConstantSheafProperties}
Let $X$ be a topological space, and let $Z \subseteq X$ be a non-empty locally closed set. 
Then the following are equivalent: 
\begin{enumerate}[(i)]
    \item $Z$ is irreducible; 
    \item $k_Z$ is uniform; 
    \item $k_Z$ is monoform. 
\end{enumerate}
\end{lemma}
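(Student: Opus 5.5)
We prove (iii)$\Rightarrow$(ii)$\Rightarrow$(i)$\Rightarrow$(iii). The implication (iii)$\Rightarrow$(ii) is immediate from the general fact recalled above that every monoform object is uniform \cite[Proposition 2.6]{KandaAtomSpec1}. Throughout, the key tool is the correspondence from \cite[Proposition 2.3.6]{SheavesOnManifolds}: subsheaves of $k_Z$ correspond to open subsets $V\subseteq Z$, via $V\mapsto k_V$. Under this correspondence intersections of subsheaves correspond to intersections of open sets, so $k_V\cap k_W = k_{V\cap W}$, and $k_V=0$ if and only if $V=\emptyset$. Moreover $k_Z/k_V \cong k_{Z\setminus V}$ by the usual exact sequence $0\to k_V\to k_Z\to k_{Z\setminus V}\to 0$, where $Z\setminus V$ is closed in $Z$ and hence locally closed in $X$.

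For (ii)$\Rightarrow$(i), we argue by contraposition. Suppose $Z$ is reducible. Then there are non-empty open subsets $V,W\subseteq Z$ with $V\cap W=\emptyset$. The subsheaves $k_V$ and $k_W$ are then non-zero, while $k_V\cap k_W=k_\emptyset=0$. Hence $k_Z$ is not uniform. Read in reverse, the same argument shows that irreducibility of $Z$ gives uniformity of $k_Z$.

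For (i)$\Rightarrow$(iii), let $Z$ be irreducible and let $N=k_V\neq 0$ be a subsheaf, so that $V\subseteq Z$ is a non-empty open set. Suppose $k_Z$ and $k_Z/N\cong k_{Z\setminus V}$ have a common non-zero subobject $S$. As a subsheaf of $k_Z$, $S$ is of the form $k_W$ with $W\neq\emptyset$ open in $Z$, and its support is $W$. On the other hand, $S$ embeds in $k_{Z\setminus V}$, so its support is contained in $Z\setminus V$. Therefore $W\cap V=\emptyset$ with both sets non-empty and open in $Z$, contradicting the irreducibility of $Z$. Hence $k_Z$ is monoform.

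The main obstacle is this last step. The subobject $S$ is only abstractly isomorphic to a subsheaf of each of the two objects, so we must not rely on the lattice correspondence inside $k_{Z\setminus V}$. The plan is to compare supports instead, using only that stalks are preserved under isomorphism and that a monomorphism of sheaves is injective on stalks. This yields the needed inclusion $W=\Supp(S)\subseteq Z\setminus V$ without any further identification.
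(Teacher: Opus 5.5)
Your proof is correct and follows essentially the same route as the paper: (i)$\Leftrightarrow$(ii) via the correspondence between subsheaves of $k_Z$ and open subsets of $Z$, and (iii)$\Rightarrow$(ii) because monoform implies uniform. For (i)$\Rightarrow$(iii), both arguments observe that a common non-zero subobject of $k_Z$ and $k_Z/k_V\cong k_{Z\setminus V}$ would yield a non-empty open subset of $Z$ disjoint from $V$; your support comparison simply makes explicit the identification that the paper compresses into ``in other words, $V\cap Z = W\cap(Z\setminus U)$''.
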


\begin{proof}
(i) $\Leftrightarrow$ (ii): 
Simply, 
\begin{align*}
    k_Z \text{ is uniform} 
    &\iff \text{ all non-zero subobjects of $k_Z$ have a non-zero intersection} \\
    &\iff \text{ all non-empty open subsets of $Z$ have a non-empty intersection (in $Z$)} \\
    &\iff Z \text{ is irreducible}. 
\end{align*}

(i) $\Rightarrow$ (iii): 
Assume that $Z$ is irreducible.
To show that $k_Z$ is monoform, let $U \subseteq X$ be an open subset such that $U \cap Z \not= \emptyset$ and suppose that $k_Z$ and $k_{Z} / k_{U \cap Z} = k_{Z \setminus U}$ admit a common subobject.
In other words, there exist open subsets $V,W \subseteq X$ such that $V \cap Z = W \cap (Z \setminus U)$. 
Suppose that $V \cap Z \neq \emptyset$. Since $Z$ is irreducible and both $U \cap Z$ and $V \cap Z$ are non-empty open subsets of $Z$, we obtain $U \cap V \cap Z \not= \emptyset$. 
However,
\[
  V \cap Z \cap U = (W \cap (Z \setminus U)) \cap U = \emptyset, 
\]
which is a contradiction. 
Therefore, $V \cap Z = \emptyset$, and hence, the only common subobject of $k_Z$ and $k_Z/k_{U \cap Z}$ is zero. 
Thus, $k_Z$ is monoform. 

(iii) $\Rightarrow$ (ii) Every monoform object is also uniform. 
\end{proof}

Let $X$ be a topological space, and let $\mathcal{F}$ be a sheaf on $X$. 
Let $S = \{ s_i \in \mathcal{F}(U_i) \mid i \in I \}$ be some set of sections of $\mathcal{F}$, where $U_i \subseteq X$ is open for all $i \in I$. 
The \emph{subsheaf generated by $S$} is the smallest subsheaf $\mathcal{F}'$ of $\mathcal{F}$ such that $s_i \in \mathcal{F}'(U_i)$ for all $i \in I$. 
This subsheaf can also be defined as the image 
\[
  \ImF \left(\bigoplus_{i \in I} k_{U_i} \to \mathcal{F} \right). 
\]
If $S$ consist of exactly one section $s \in \mathcal{F}(X)$, then we denote by $\langle s \rangle$ the subsheaf generated by $s$. 
For the support, we have $\Supp(\langle s \rangle) = \supp(s)$. 
It is also worth noting that $\langle s \rangle \cong k_{\supp(s)}$, since $k$ is a field.

\begin{lemma}\label{Lemma:SectionInduceMonomorf}
Let $X$ be a topological space and let $\mathcal{F}$ be a sheaf on $X$. 
\begin{enumerate}[(i)]
  \item If $U \subseteq X$ is open and $s \in \mathcal{F}(U)$ is a non-zero section, then there exist a non-zero monomorphism $k_{\supp(s)} \to \mathcal{F}$ whose image contains $s$; 

  \item If $x \in \Supp(\mathcal{F})$, then there exist a locally closed set $Z \subseteq X$ containing $x$ and a non-zero monomoprhism $k_Z \to \mathcal{F}$. 
\end{enumerate}
\end{lemma}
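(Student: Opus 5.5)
For (i), I will take the extension by zero of the restricted section; for (ii), I will pick a germ at $x$ and apply (i).

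\textbf{Part (i).} Let $Z = \supp(s)$, which is closed in $U$ and hence locally closed in $X$. Write $j\colon U \to X$ for the inclusion. Since $k$ is a field, the section $s$ determines a morphism $k_U \to \mathcal{F}|_U$ on $U$, given by $1 \mapsto s$. Its image is the subsheaf of $\mathcal{F}|_U$ generated by $s$. By the remark preceding the lemma, applied on the space $U$, this image is isomorphic to $(k_U)_Z = k_Z|_U$. Concretely, at a point $y \in U$ the stalk map $k \to \mathcal{F}_y$ sends $1$ to $s_y$. This map is injective exactly when $y \in Z$ and is zero otherwise. So the morphism factors as $k_U \twoheadrightarrow (k_U)_Z \hookrightarrow \mathcal{F}|_U$, and the second map is a monomorphism because it is injective on stalks.

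Next I extend by zero. Applying $j_!$ gives a monomorphism $j_!((k_U)_Z) \to j_!(\mathcal{F}|_U)$, since $j_!$ is exact. Because $U$ is open, $j_!(\mathcal{F}|_U) = \mathcal{F}_U$ is a subsheaf of $\mathcal{F}$. Also $j_!((k_U)_Z) = (k_X)_{U \cap Z} = k_Z$, using $(\mathcal{F}_Z)_{Z'} = \mathcal{F}_{Z\cap Z'}$ and $Z \subseteq U$. Composing yields a monomorphism $k_Z \to \mathcal{F}$. On $U$ the section $1 \in k_Z(U)$, the image of $1 \in k_U(U)$, maps to $s$, so $s$ lies in the image. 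The morphism is non-zero because $s \neq 0$: a non-zero section has a non-zero germ somewhere, so $Z \neq \emptyset$.

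\textbf{Part (ii).} If $x \in \Supp(\mathcal{F})$, then $\mathcal{F}_x \neq 0$. Choose a non-zero germ, represented by some $s \in \mathcal{F}(U)$ with $U$ an open neighbourhood of $x$ and $s_x \neq 0$. Then $x \in \supp(s)$, and part (i) gives a non-zero monomorphism $k_{\supp(s)} \to \mathcal{F}$. We take $Z = \supp(s)$.

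\textbf{Main obstacle.} The only delicate point is the identification of the image of $k_U \to \mathcal{F}|_U$ with $(k_U)_Z$, that is, checking that the kernel is exactly $k_{U\setminus Z}$. Since $U \setminus Z$ is open, this is a stalkwise computation. The rest is bookkeeping with $j_!$.
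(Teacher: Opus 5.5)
Your proof is correct and follows the same route as the paper. The paper observes in one line that the subsheaf generated by $s$ is isomorphic to $k_{\supp(s)}$, and for (ii) it applies (i) to a section representing a non-zero germ at $x$; your stalkwise identification of the image on $U$ and the extension by zero along $j_!$ simply spell out the details of that identification.
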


\begin{proof}
(i) The section $s$ generates a subsheaf, which is isomorphic with $k_{\supp(s)}$. 
Note that $k_{\supp(s)}$ is well-defined, because $\supp(s)$ is a locally closed set of $X$. 

(ii) If $x \in \Supp(\mathcal{F})$, then there exist an open subset $U \subseteq X$ containing $x$ and a non-zero section $s \in \mathcal{F}(U)$ such that $s_x \not= 0$. 
Now, apply (i) for the section $s$. 
\end{proof}

%%%%%%%%%%%%%%%%%%%%%%%%%%%%%%%%%%%%%%%%%%%%%%%%%%%%%5
\subsection{Sheaves on a Sober Space}

Let $X$ be a topological space. 
If a closed irreducible subset $Z \subseteq X$ is the closure of a unique point, then that point is called the \emph{generic point} of $Z$. 
A topological space is called \emph{sober}, if every closed irreducible subset has a unique 
generic point. 
Every Hausdorff space is sober, since its only irreducible subsets are singletons, and every sober space is $T_0$. 

The \emph{sobrification} $\Sob(X)$ of $X$ is the set of all irreducible closed subsets of $X$, endowed with the so-called \emph{lower Vietoris topology}, whose open subsets are the sets 
\[
  \diamond U = \{ F \subseteq X \mid F \text{ is closed, irreducible, and } U \cap F \not= \emptyset \} 
\]
for each open subset $U \subseteq X$. 
The sobrification $\Sob(X)$, together with the map 
\[
  \eta \colon X \to \Sob(X), \quad \eta(x) = \overline{\{ x \}}, 
\]
satisfies the following universal property: 
For every continuous map $f \colon X \to Y$ to a sober topological space $Y$, there exists a unique continuous map $f^s \colon \Sob(X) \to Y$ such that $f = f^s \circ \eta$, 
\begin{center}
\begin{tikzcd}
  X \arrow{rr}{f} \arrow{rd}[swap]{\eta} & & Y \\
   & \Sob(X) \arrow[ru, dashed, swap]{}{f^s}
\end{tikzcd}
\end{center}
This universal property determines the sobrification uniquely up to unique isomorphism and makes it functorial. Consequently, the sobrification functor is left adjoint to the inclusion of the full subcategory of sober topological spaces into the category of topological spaces. In particular, there is a natural bijection 
\[
  \Hom_{\Top}(X,Y) = \Hom_{\SobC}(\Sob(X), Y)
\]
for every topological space $X$ and every sober topological space $Y$. 
For a reference on sobrification, see \cite[Chapter 8]{goubault-larrecq_2013}.

\begin{remark}[Sobrification preserves products]\label{Rmk:SobPresProd}
Observe that, although the sobrification functor is a left adjoint, it preserves products. 
More precisely, if $(X_i)_{i\in I}$ is a family of topological spaces, then
\[
  \Sob \left( \prod_{i \in I} X_i \right) = \prod_{i \in I} \Sob(X_i). 
\]
For more details, see \cite[Theorem 1.4]{sobrificationHoffmannRemainder}. 
\end{remark}

\begin{remark}\label{Rmk:SheavesOnSobEqv}
Recall that the category of sheaves is determined by the frame of open subsets $\Open(X)$ rather than by the topological space $X$ itself. 
More precisely, if $X_1$ and $X_2$ are topological spaces and $f \colon \Open(X_1) \to \Open(X_2)$ is an order-isomorphism, then it induces an equivalence of categories $\Sh(X_1) \simeq \Sh(X_2)$. 
In particular, this applies to a topological space $X$ and its sobrification: 
\[
  \Sh(X) \simeq \Sh(\Sob(X)). 
\]
Consequently, we may always restrict our attention to sober topological spaces. 
\end{remark}

%%%%%%%%%%%%%%%%%%%%%%%%%%%%%%%%%%%%%%%%%%%%%%%%
\subsection{Indecomposable Injective Sheaves}\label{Subsec:InjScottSh}

Let $X$ be a topological space. 
Recall that a sheaf $\mathcal{F}$ on $X$ is \emph{flabby} if the restriction morphisms $\res_{V,U}$ are surjective for all open subsets $U \subseteq V$. 
If $k$ is a field, then a sheaf $\mathcal{F}$ on $X$ is injective if and only if it is flabby 
(see \cite[Exercise II.10]{SheavesOnManifolds}). 
We now give an important example of an injective sheaf. 
Let $x \in X$. 
The \emph{skyscraper sheaf at $x$ with value $k$} (denoted by $\sky(x)$) is defined by setting 
\[
  \sky(x)(U) = 
  \begin{cases}
      k, &\text{if } x \in U; \\
      0, &\text{otherwise}
  \end{cases}
\]
for all open subsets $U \subseteq X$. 
It is well known that the support of $\sky(x)$ is $\overline{ \{ x \}}$, and that $\Hom(\mathcal{F}, \sky(x)) = \Hom(\mathcal{F}_x, k)$ for every sheaf $\mathcal{F}$ on $X$. 
Since $k$ is injective as a $k$-vector space, it follows that $\sky(x)$ is an injective sheaf on $X$. 
Moreover, $\sky(x)$ is clearly indecomposable. 
More generally:

\begin{lemma}\label{Lemma:IndicatorSheafFlabbyForClosedIrred}
Let $X$ be a topological space, and let $Z \subseteq X$ be a closed and irreducible subset. 
Then $k_Z$ is flabby and indecomposable. 
\end{lemma}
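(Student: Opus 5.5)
We prove that $k_Z$ is flabby by computing its sections explicitly, and that it is indecomposable by invoking Lemma~\ref{Lemma:ConstantSheafProperties}.

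\textbf{Sections of $k_Z$.} Since $Z$ is closed, $k_Z = i_*k$ restricted to $Z$, that is, $k_Z(U) = \Gamma(U\cap Z, k_Z|_Z)$ for every open $U \subseteq X$. This is the set of locally constant $k$-valued functions on $U \cap Z$.

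We claim that for $U \cap Z \neq \emptyset$ we have $k_Z(U) = k$, given by the constant functions, and that $k_Z(U)=0$ when $U\cap Z=\emptyset$. The key observation is that $U\cap Z$ is a non-empty open subset of the irreducible space $Z$. Hence $U\cap Z$ is itself irreducible, and in particular connected. A locally constant function on a connected space is constant, which proves the claim.

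\textbf{Flabbiness.} Let $U \subseteq V$ be open. If $U\cap Z=\emptyset$, the target of $\res_{V,U}$ is zero and there is nothing to check. Otherwise $V\cap Z\neq\emptyset$ as well, and by the claim $\res_{V,U}$ is the identity map $k\to k$, which is surjective. So $k_Z$ is flabby.

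\textbf{Indecomposability.} We use Lemma~\ref{Lemma:ConstantSheafProperties}. A closed set is locally closed, and $Z$ is non-empty and irreducible, so $k_Z$ is uniform. A uniform object is indecomposable: a decomposition $k_Z \cong A\oplus B$ with $A,B\neq 0$ would give two non-zero subobjects with zero intersection, which uniformity forbids.

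Alternatively, one can argue directly. We have $\operatorname{End}(k_Z) \cong k_Z(X) = k$ by the claim, since $X \cap Z = Z \neq \emptyset$. This ring has no non-trivial idempotents, so $k_Z$ is indecomposable.

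The only point needing care is the identification $k_Z(U)\cong\Gamma(U\cap Z,k)$ for closed $Z$, which is the standard formula $i_! = i_*$ for closed embeddings. Once this is granted, the irreducibility argument is immediate. Since $k$ is a field, flabby is equivalent to injective, so we also obtain that $k_Z$ is an indecomposable injective sheaf.
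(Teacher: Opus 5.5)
Your proposal is correct and follows essentially the same route as the paper. Flabbiness comes from the connectedness of non-empty open subsets of the irreducible set $Z$ together with $i_!=i_*$ for the closed inclusion; you simply make the resulting sections $k_Z(U)\in\{k,0\}$ explicit. Indecomposability comes from uniformity via Lemma~\ref{Lemma:ConstantSheafProperties}, and your alternative argument through $\operatorname{End}(k_Z)\cong k_Z(X)=k$ is also valid.
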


\begin{proof}
Firstly, $k_Z$ is uniform by Lemma \ref{Lemma:ConstantSheafProperties}, and hence, indecomposable. 
Secondly, the constant sheaf $k_Z$ on $Z$ is flabby, since every non-empty open subset of $Z$ is connected. 
The extension by zero along the closed inclusion $i \colon Z \to X$ coincides with the direct image functor. 
In particular, it preserves flabbiness. 
Hence, $k_Z$ is also flabby as a sheaf on $X$. 
\end{proof}

\begin{proposition}\label{Prop:SheafHomSuppSky}
Let $X$ be a topological space, and let $\mathcal{F}$ be a sheaf on $X$. 
Then $x \in \Supp(\mathcal{F})$ if and only if $\Hom(\mathcal{F}, \sky(x)) \not= 0$. 
\end{proposition}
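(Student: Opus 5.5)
The plan is to reduce the statement to a computation with stalks via the adjunction already recorded before Lemma~\ref{Lemma:IndicatorSheafFlabbyForClosedIrred}. That adjunction is the natural isomorphism
\[
  \Hom(\mathcal{F}, \sky(x)) \cong \Hom_k(\mathcal{F}_x, k),
\]
which comes from $\sky(x) = (i_x)_* k$, where $i_x \colon \{x\} \to X$ is the inclusion, together with $i_x^{-1}\mathcal{F} = \mathcal{F}_x$. I will take this isomorphism as given; if it needs checking, it is the standard adjunction $i_x^{-1} \dashv (i_x)_*$. Once it is in hand, the proposition reduces to a statement about vector spaces.

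The key step is that, because $k$ is a field, a $k$-vector space $V$ satisfies $\Hom_k(V,k) \neq 0$ if and only if $V \neq 0$.
\begin{itemize}
  \item If $V \neq 0$, choose a nonzero vector $v$, extend $\{v\}$ to a basis (Zorn's lemma), and take the coordinate functional at $v$. This is a nonzero linear map $V \to k$.
  \item If $V = 0$, the only linear map $V \to k$ is zero.
\end{itemize}
Applying this with $V = \mathcal{F}_x$ gives
\[
  \Hom(\mathcal{F}, \sky(x)) \neq 0 \iff \mathcal{F}_x \neq 0 \iff x \in \Supp(\mathcal{F}),
\]
where the last equivalence is the definition of support.

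For a more hands-on route, one can make both directions explicit.
\begin{itemize}
  \item Given a nonzero $\varphi \colon \mathcal{F}_x \to k$, define $\mathcal{F}(U) \to \sky(x)(U)$ by $s \mapsto \varphi(s_x)$ when $x \in U$, and $0$ otherwise. This is compatible with restrictions because germs are.
  \item Conversely, a morphism $\mathcal{F} \to \sky(x)$ vanishing on $\mathcal{F}_x$ vanishes on every section over every $U \ni x$, since each such section factors through its germ at $x$. Over $U \not\ni x$ the target is already zero.
\end{itemize}

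I expect no real obstacle. The only point needing care is that the sections of $\sky(x)$ over every open set containing $x$ are $k$, with identity restrictions. This is what makes the compatible system $\{\mathcal{F}(U) \to k\}_{U \ni x}$ exactly a map out of the colimit $\mathcal{F}_x$, which justifies the displayed isomorphism.
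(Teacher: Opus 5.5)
Your proposal is correct and matches the paper's argument. The paper also builds a nonzero sheaf map from a nonzero functional $\mathcal{F}_x \to k$, and for the converse uses the commutative square through $\sky(x)_x$ to show that a nonzero $\varphi_U$ factors through $\mathcal{F}_x$, which is your hands-on route; your primary route just packages both directions into the isomorphism $\Hom(\mathcal{F},\sky(x)) \cong \Hom_k(\mathcal{F}_x,k)$ that the paper quotes as well known just before the proposition.
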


\begin{proof}
If $x \in \Supp(\mathcal{F})$, then $\mathcal{F}_x \not= 0$, so there exist a non-zero $k$-linear map $\mathcal{F}_x \to k$. 
This induces a non-zero morphism of sheaves $\mathcal{F} \to \sky(x)$. 
Suppose then that $\Hom(\mathcal{F}, \sky(x)) \not= 0$. 
Then there exists a non-zero morphism $\varphi \colon \mathcal{F} \to \sky(x)$ and an open subset $U \subseteq X$ such that $\varphi_U \not= 0$. 
We have a commutative diagram 
\[
\begin{tikzcd}
  \mathcal{F}(U) \arrow{r}{\varphi_U} \arrow{d} & \sky(x)(U) \arrow{d} \\
  \mathcal{F}_x \arrow{r}{\varphi_x} & \sky(x)_x
\end{tikzcd}
\]
From this we can see that the non-zero morphism $\mathcal{F}(U) \to \sky(x)_x$ factors through $\mathcal{F}_x$. 
Thus, $\mathcal{F}_x \not= 0$.
\end{proof}

Next, we fully characterize the indecomposable injectives sheaves.

\begin{proposition}\label{Prop:IndInjSuppIrr}
Let $X$ be a topological space, and let $\mathcal{F}$ be a sheaf on $X$. 
If $\mathcal{F}$ is uniform, then $\Supp(\mathcal{F})$ is irreducible. 
In particular, if $\mathcal{F}$ is indecomposable injective sheaf on $X$, 
then $\Supp(\mathcal{F})$ is irreducible. 
\end{proposition}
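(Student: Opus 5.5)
We argue by contraposition: assuming $\Supp(\mathcal{F})$ is reducible, we will produce two non-zero subsheaves of $\mathcal{F}$ with zero intersection. This contradicts uniformity. The second assertion then follows immediately, because an indecomposable injective object is uniform: every non-zero subobject is essential, since an indecomposable injective is the injective envelope of each of its non-zero subobjects. Note also that $\mathcal{F} \neq 0$, so $\Supp(\mathcal{F})$ is non-empty.

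Suppose $S = \Supp(\mathcal{F})$ is reducible. By the open-set characterization of irreducibility, there are open subsets $U_1, U_2 \subseteq X$ with
\[
U_1 \cap S \neq \emptyset, \qquad U_2 \cap S \neq \emptyset, \qquad U_1 \cap U_2 \cap S = \emptyset .
\]
Since $U_i$ is open, the sheaf $\mathcal{F}_{U_i}$ is a subsheaf of $\mathcal{F}$. Its stalk at $x$ is $\mathcal{F}_x$ for $x \in U_i$ and $0$ otherwise, so it has support $U_i \cap S$, which is non-empty. Hence $\mathcal{F}_{U_1}$ and $\mathcal{F}_{U_2}$ are both non-zero.

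Next we compute their intersection. Stalks are exact, so intersections of subsheaves are computed stalkwise, giving
\[
(\mathcal{F}_{U_1} \cap \mathcal{F}_{U_2})_x = (\mathcal{F}_{U_1})_x \cap (\mathcal{F}_{U_2})_x .
\]
This vanishes unless $x \in U_1 \cap U_2$, and for such $x$ it equals $\mathcal{F}_x$. But $U_1 \cap U_2 \cap S = \emptyset$, so $\mathcal{F}_x = 0$ there as well. Thus the intersection has all stalks zero, hence is zero. This contradicts the uniformity of $\mathcal{F}$.

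The only step needing care is this stalkwise computation. It rests on two facts: a sheaf with all stalks zero is zero, and taking stalks commutes with finite intersections of subobjects, because the intersection is a kernel and stalks are exact. Both are standard, so no real obstacle arises.
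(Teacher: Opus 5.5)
Your proof is correct and follows the paper's argument: both use the non-zero subsheaves $\mathcal{F}_{U_1}$ and $\mathcal{F}_{U_2}$, whose intersection ($\mathcal{F}_{U_1\cap U_2}$, which you compute stalkwise) must be non-zero by uniformity, and both deduce the second claim from the fact that indecomposable injectives are uniform. The only difference is cosmetic: you argue by contraposition, while the paper argues directly.
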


\begin{proof}
Let $U_1, U_2 \subseteq X$ be open subsets such that $\Supp(\mathcal{F}) \cap U_1$ and $\Supp(\mathcal{F}) \cap U_2$ are non-empty. 
In particular, there exist points $x_1 \in \Supp(\mathcal{F}) \cap U_1$ and $x_2 \in \Supp(\mathcal{F}) \cap U_2$. 
Consider the subsheaves $\mathcal{F}_{U_1}$ and $\mathcal{F}_{U_2}$ of $\mathcal{F}$. 
These are non-zero, since $(\mathcal{F}_{U_i})_{x_i} = \mathcal{F}_{x_i} \not= 0$ for $i = 1,2$. 
As non-zero subsheaves of the uniform sheaf $\mathcal{F}$, their intersection must also be non-zero. 
However, their intersection is precisely $\mathcal{F}_{U_1 \cap U_2}$. 
Thus, $\mathcal{F}_y = (\mathcal{F}_{U_1 \cap U_2})_y \not= 0$ for some $y \in U_1 \cap U_2$ proving the first claim. 

Finally, every indecomposable injective object is uniform; see, for example, \cite[p.~296]{popescu}. This proves the second claim.
\end{proof}

\begin{proposition}\label{Pro:IndInjShIsGeneratedByAGlobalSec}
Let $X$ be a topological space, and let $\mathcal{F}$ be an indecomposable injective sheaf on $X$. 
Then $\mathcal{F}$ is generated by a global section i.e.\ 
there exists a non-zero global section $s \in \mathcal{F}(X)$ such that $\mathcal{F} = \langle s \rangle$. 
\end{proposition}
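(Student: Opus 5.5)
Since $\mathcal{F}$ is indecomposable injective, it is uniform and it is the injective envelope of each of its non-zero subobjects. The plan is to find one non-zero subsheaf $k_Z \subseteq \mathcal{F}$ that is itself injective. An injective non-zero subobject of an indecomposable injective is a direct summand, so it must equal $\mathcal{F}$. Moreover, it will be generated by a global section.

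\textbf{Step 1.} Choose a non-zero section $s \in \mathcal{F}(U)$ for some open $U$. By Lemma~\ref{Lemma:SectionInduceMonomorf}(i), this gives a monomorphism $k_{\supp(s)} \to \mathcal{F}$. Since $\mathcal{F}$ is flabby, we may extend $s$ to a global section $\tilde s \in \mathcal{F}(X)$. Then $\langle \tilde s\rangle \cong k_{W}$ with $W = \supp(\tilde s)$, which is closed in $X$ because $\tilde s$ is global. Moreover $\langle \tilde s\rangle$ is a non-zero subsheaf of the uniform sheaf $\mathcal{F}$, so it is uniform. Lemma~\ref{Lemma:ConstantSheafProperties} then shows that $W$ is irreducible.

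\textbf{Step 2.} By Lemma~\ref{Lemma:IndicatorSheafFlabbyForClosedIrred}, $k_W$ is flabby, hence injective since $k$ is a field. Therefore $\langle \tilde s\rangle$ is an injective subobject of $\mathcal{F}$, and so it is a direct summand. Because $\mathcal{F}$ is indecomposable and $\langle\tilde s\rangle \neq 0$, we get $\mathcal{F} = \langle \tilde s\rangle$. This is the required statement.

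The only delicate point is Step 1: we need the global extension of $s$ to remain non-zero, and we need its support to be closed. The first holds automatically, since $\tilde s$ restricts to $s \neq 0$. The second holds because the support of any section is closed in its domain, and here the domain is $X$. No genericity argument is needed; the rest is formal.
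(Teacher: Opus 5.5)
Your proposal is correct and follows essentially the same route as the paper. Both arguments take a non-zero global section $s$, note that $\langle s\rangle \cong k_{\supp(s)}$ has closed support, deduce irreducibility from uniformity via Lemma~\ref{Lemma:ConstantSheafProperties}, obtain injectivity from Lemma~\ref{Lemma:IndicatorSheafFlabbyForClosedIrred}, and conclude by indecomposability. The only differences are that you explicitly justify the existence of a non-zero global section by extending a local one via flabbiness, which the paper simply asserts, and that your monomorphism remark from Lemma~\ref{Lemma:SectionInduceMonomorf} is not needed.
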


\begin{proof}
Take any non-zero global section $s \in \mathcal{F}(X)$ (such a section exists since $\mathcal{F}$ is non-zero and flabby). This section generates a subsheaf $\langle s \rangle$ of $\mathcal{F}$. The support of this subsheaf is $\supp(s)$ and, in particular, is closed.
Observe that $\langle s \rangle$ is uniform, as a subobject of the uniform sheaf $\mathcal{F}$. Hence, by Lemma \ref{Lemma:ConstantSheafProperties}, the support $\supp(s)$ is irreducible. It follows from Lemma \ref{Lemma:IndicatorSheafFlabbyForClosedIrred} that $\langle s \rangle$ is flabby, and therefore injective.
Consequently, the exact sequence $0 \to \langle s \rangle \to \mathcal{F} \to \mathcal{F} / \langle s \rangle \to 0$ splits. 
Since $\mathcal{F}$ is indecomposable, we must have either $\langle s \rangle = 0$ or $\langle s \rangle = \mathcal{F}$. As $s \neq 0$, the first possibility is excluded. Therefore, $ \mathcal{F} = \langle s \rangle$.
\end{proof}

\begin{proposition}\label{Prop:IndInjSuppClosed}
Let $X$ be a topological space, and let $\mathcal{F}$ be an indecomposable injective sheaf on $X$. 
Then the support of $\mathcal{F}$ is closed. 
\end{proposition}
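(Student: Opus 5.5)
The plan is to read this off Proposition \ref{Pro:IndInjShIsGeneratedByAGlobalSec}. That result gives a non-zero global section $s \in \mathcal{F}(X)$ with $\mathcal{F} = \langle s \rangle$. As recalled in the subsection on basic notation, the support of a subsheaf generated by a single section satisfies $\Supp(\langle s \rangle) = \supp(s)$. Hence
\[
  \Supp(\mathcal{F}) = \Supp(\langle s \rangle) = \supp(s).
\]

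Next I use that the support of a section $s \in \mathcal{F}(U)$ is closed in $U$. The quick argument is this: the set of points where the germ vanishes is open, because $s_x = 0$ means $s$ restricts to zero on some neighbourhood of $x$, and then all germs at points of that neighbourhood vanish too. Here $U = X$, since $s$ is a global section, so $\supp(s)$ is closed in $X$. Therefore $\Supp(\mathcal{F})$ is closed.

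The main point to check is that the generating section really is global, and not merely defined on some open $U$. If it were only defined on $U$, its support would be just locally closed in $X$. Proposition \ref{Pro:IndInjShIsGeneratedByAGlobalSec} supplies a global section precisely because an injective sheaf is flabby, so non-zero global sections exist. With that in hand, the argument is immediate. Combined with Proposition \ref{Prop:IndInjSuppIrr}, it also shows that $\Supp(\mathcal{F})$ is a closed irreducible set, which is what later feeds into the identification with skyscraper sheaves at generic points.
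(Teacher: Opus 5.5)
Your proof is correct and is essentially the paper's own proof: Proposition~\ref{Pro:IndInjShIsGeneratedByAGlobalSec} gives $\mathcal{F} = \langle s \rangle$ for a non-zero global section $s \in \mathcal{F}(X)$, so $\Supp(\mathcal{F}) = \supp(s)$ is closed in $X$. Your extra argument that the set of points where the germ of $s$ vanishes is open only spells out the standard fact that the paper recalls without proof.
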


\begin{proof}
By Proposition \ref{Pro:IndInjShIsGeneratedByAGlobalSec}, $\Supp(\mathcal{F}) = \supp(s)$ for some non-zero global section $s \in \mathcal{F}(X)$. 
Thus, $\Supp(\mathcal{F})$ is closed. 
\end{proof}

We are now ready to prove

\begin{theorem}\label{Thm:IndInjAreSkysrc}
Let $X$ be a sober topological space, and let $\mathcal{F}$ be a sheaf on $X$. 
Then $\mathcal{F}$ is indecomposable injective if and only if $\mathcal{F} \cong \sky(x)$ for some $x \in X$. 
\end{theorem}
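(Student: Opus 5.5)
The direction $\Leftarrow$ is already established: as recalled before Lemma~\ref{Lemma:IndicatorSheafFlabbyForClosedIrred}, $\sky(x)$ is injective because $\Hom(-,\sky(x)) = \Hom((-)_x, k)$ is exact, and it is indecomposable. So the plan concentrates on $\Rightarrow$.

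Let $\mathcal{F}$ be indecomposable injective. By Proposition~\ref{Pro:IndInjShIsGeneratedByAGlobalSec}, $\mathcal{F} = \langle s\rangle \cong k_{\supp(s)}$ for a non-zero global section $s$. Put $Z = \supp(s) = \Supp(\mathcal{F})$. By Proposition~\ref{Prop:IndInjSuppClosed} $Z$ is closed, and by Proposition~\ref{Prop:IndInjSuppIrr} $Z$ is irreducible. Since $X$ is sober, $Z = \overline{\{x\}}$ for a unique generic point $x$. Thus $\mathcal{F} \cong k_{\overline{\{x\}}}$, and it remains to show that $k_{\overline{\{x\}}} \cong \sky(x)$.

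For the last step, compute sections directly. For open $U$, the sheaf $k_Z = i_* k$ (with $i\colon Z\to X$ the closed inclusion) has $k_Z(U) = $ locally constant functions on $U\cap Z$. Two cases arise.
\begin{itemize}
\item If $x\notin U$, then $U\cap \overline{\{x\}} = \emptyset$, since any open set meeting the closure of $\{x\}$ contains $x$; hence $k_Z(U) = 0$.
\item If $x\in U$, then $U\cap Z$ is a non-empty open subset of the irreducible space $Z$, so it is connected. Hence $k_Z(U) = k$.
\end{itemize}
The restriction maps are the identity on $k$ whenever both sets contain $x$, and zero otherwise. This matches $\sky(x)$ exactly, giving a natural isomorphism.

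Alternatively, one can obtain this isomorphism from a map. Since $(k_Z)_x = k$, Proposition~\ref{Prop:SheafHomSuppSky} yields a non-zero morphism $k_Z \to \sky(x)$, and one checks it is an isomorphism on sections by the case split above.

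The only slightly delicate point is justifying that $k_Z(U) = k$, that is, that $U \cap Z$ is connected and the value is not a product of copies of $k$. This follows from irreducibility of $Z$: any non-empty open subset of an irreducible space is irreducible, hence connected. The argument is the same one used in the proof of Lemma~\ref{Lemma:IndicatorSheafFlabbyForClosedIrred}, so I expect no real obstacle.
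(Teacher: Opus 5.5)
Your proposal is correct and follows essentially the same route as the paper: write $\mathcal{F}=\langle s\rangle\cong k_{\supp(s)}$ via Proposition~\ref{Pro:IndInjShIsGeneratedByAGlobalSec}, observe that the support is closed and irreducible, use sobriety to get $\supp(s)=\overline{\{x\}}$, and conclude $\mathcal{F}\cong k_{\overline{\{x\}}}\cong\sky(x)$, with the converse resting on the injectivity and indecomposability of $\sky(x)$. Your one addition is the explicit section-by-section check that $k_{\overline{\{x\}}}\cong\sky(x)$, via the case split $x\in U$ or $x\notin U$; the paper uses this identification without spelling it out.
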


\begin{proof}
By Proposition \ref{Pro:IndInjShIsGeneratedByAGlobalSec}, there exists a non-zero global section $s \in \mathcal{F}(X)$ such that $\mathcal{F} = \langle s \rangle$. 
The support of $\mathcal{F}$ is irreducible and closed by Propositions \ref{Prop:IndInjSuppIrr} and \ref{Prop:IndInjSuppClosed}. 
Since $X$ is sober, $\Supp(\mathcal{F}) = \supp(s) = \overline{ \{ x \}}$ for some $x \in X$. 
Hence, we obtain an isomorphism  
\[
  \mathcal{F} = \langle s \rangle \cong k_{\supp(s)} = k_{\overline{ \{ x \} } } = \sky(x)
\]
for some $x \in X$. 

The converse was shown in Lemma \ref{Lemma:IndicatorSheafFlabbyForClosedIrred}. 
\end{proof}

\begin{corollary}\label{Cor:IndInjAreIrredClosed}
Let $X$ be a topological space. 
There is a one-to-one correspondence with the indecomposable injective sheaves on $X$ and the points of the sobrification of $X$. 
\end{corollary}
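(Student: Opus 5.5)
The plan is to reduce to the sober case via Remark \ref{Rmk:SheavesOnSobEqv} and then apply Theorem \ref{Thm:IndInjAreSkysrc}. The order isomorphism $\Open(X) \to \Open(\Sob(X))$, $U \mapsto \diamond U$, induces an equivalence $\Sh(X) \simeq \Sh(\Sob(X))$. An equivalence of categories preserves injectivity and indecomposability, and it preserves and reflects isomorphism. It therefore induces a bijection between isomorphism classes of indecomposable injective sheaves on $X$ and those on $\Sob(X)$. Since $\Sob(X)$ is sober, Theorem \ref{Thm:IndInjAreSkysrc} says the latter classes are exactly those of the skyscraper sheaves $\sky(F)$, where $F$ ranges over points of $\Sob(X)$.

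It then remains to check that $F \mapsto [\sky(F)]$ is injective. The support of $\sky(F)$ is the closure $\overline{\{F\}}$ in $\Sob(X)$. Support is an isomorphism invariant, and $\Sob(X)$ is $T_0$ (being sober), so distinct points have distinct closures. Hence $\sky(F) \cong \sky(F')$ forces $F = F'$. Surjectivity is exactly the content of Theorem \ref{Thm:IndInjAreSkysrc}.

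For a more intrinsic description, I would also state the correspondence directly on $X$. An irreducible closed set $Z \subseteq X$ corresponds to the class of $k_Z$. This sheaf is indecomposable and flabby, hence injective, by Lemma \ref{Lemma:IndicatorSheafFlabbyForClosedIrred}, and its support is $Z$. Conversely, an indecomposable injective $\mathcal{F}$ corresponds to $\Supp(\mathcal{F})$, which is closed and irreducible by Propositions \ref{Prop:IndInjSuppIrr} and \ref{Prop:IndInjSuppClosed}. By Proposition \ref{Pro:IndInjShIsGeneratedByAGlobalSec}, $\mathcal{F} = \langle s \rangle \cong k_{\supp(s)} = k_{\Supp(\mathcal{F})}$, so the two maps are mutually inverse. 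This direct argument avoids the sober reduction entirely.

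The only mildly delicate point is recognizing that, in this direct version, the support determines the isomorphism class. That is exactly what Proposition \ref{Pro:IndInjShIsGeneratedByAGlobalSec} provides, via the isomorphism $\langle s \rangle \cong k_{\supp(s)}$. Given that, I expect no real obstacle.
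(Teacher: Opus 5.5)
Your proof is correct. The first two paragraphs are the paper's argument: the paper reduces to $\Sob(X)$ via Remark~\ref{Rmk:SheavesOnSobEqv} and applies Theorem~\ref{Thm:IndInjAreSkysrc}. You also make explicit why $F\mapsto[\sky(F)]$ is injective, using $\Supp(\sky(F))=\overline{\{F\}}$ and the $T_0$ property. The paper's proof of the corollary leaves this point implicit and only records it later as Proposition~\ref{Prop:SkyInjBij}.

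Your third paragraph is a genuinely different route, and it is also valid. It is the proof of Theorem~\ref{Thm:IndInjAreSkysrc} run without the sobriety hypothesis, with $k_Z$ in place of $\sky(x)$. Since support is an isomorphism invariant and $\Supp(k_Z)=Z$, the maps $Z\mapsto[k_Z]$ and $[\mathcal F]\mapsto\Supp(\mathcal F)$ are indeed mutually inverse.

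\begin{itemize}
\item Your direct route gives an intrinsic description of the correspondence on $X$ itself. It needs neither the frame isomorphism $U\mapsto\diamond U$ nor the equivalence $\Sh(X)\simeq\Sh(\Sob(X))$.
\item The paper's route is shorter once the theorem is available. It also produces directly the skyscraper description that the paper later uses to compute the Gabriel topology.
\end{itemize}

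The two bijections agree. Under the equivalence, $\sky(Z)$ on $\Sob(X)$ corresponds to $U\mapsto\sky(Z)(\diamond U)$, which equals $k$ exactly when $U\cap Z\neq\emptyset$. This is $k_Z$, because a non-empty $U\cap Z$ is irreducible and hence connected.
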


\begin{proof}
By definition, $\Sob(X)$ is sober topological space, so by the previous theorem, there is a one-to-one correspondence with the points of $\Sob(X)$  
and indecoposable injective sheaves on $\Sob(X)$. 
But recall from Remark \ref{Rmk:SheavesOnSobEqv} that there is an equivalence of categories, $\Sh(X) \simeq \Sh(\Sob(X))$. 
Thus, the claim follows. 
\end{proof}

\begin{remark}\label{Rmk:Hoppner}
Corollary \ref{Cor:IndInjAreIrredClosed} may be viewed as a partial generalization of Höppner's classification of indecomposable injective persistence modules \cite[Proposition 1.1]{Hoppner}.
Let $P$ be a poset. 
Höppner showed that the indecomposable injective persistence modules over $P$ are in one-to-one correspondence with the upward-directed down-sets $D \subseteq P$.
This connection can be made more explicit by expressing these order-theoretic properties in topological terms. 
Recall that the category of persistence modules over $P$ is equivalent to $\Sh(P^a)$, and that a subset $D \subseteq P$ is upward-directed (resp.\ a down-set) if and only if it is irreducible (resp.\ closed) with respect to the Alexandrov topology.
\end{remark}

%%%%%%%%%%%%%%%%%%%%%%%%%%%%%%%%%%%%%%%%%%%%%%%%%
\subsection{Localizing Subcategories of Sheaves via Support Theory}\label{Subsec:SupportTheory}

For an abstract treatment of support theory, see \cite[Section 2]{supportTheoryYu}.
Let $X$ be a topological space, and let $\mathcal{F}$ be a sheaf on $X$. 
Recall that we write $\Supp(\mathcal{F})$ for the support of $\mathcal{F}$. 
It satisfies the following properties: 
\begin{enumerate}[(i)]
    \item $\Supp(\mathcal{F}) = \emptyset$ if and only if $\mathcal{F} = 0$; 
    \item $\Supp(\mathcal{F}) = \Supp(\mathcal{F}') \cup \Supp(\mathcal{F}'')$ for all short exact sequences $0 \to \mathcal{F}' \to \mathcal{F} \to \mathcal{F}'' \to 0$; 
    \item $\Supp \left( \bigoplus_{i \in I} \mathcal{F}_i \right) = \bigcup_{i \in I} \Supp(\mathcal{F}_i)$ for all sheaves $\mathcal{F}_i$ ($i \in I$). 
\end{enumerate}
In terms of \cite{supportTheoryYu}, $\Supp$ is a \emph{strong non-degenerate support}. 
The support induces two natural maps: 
\begin{itemize}
    \item For a localizing subcategory $\mathcal{L}$ of $\Sh(X)$, 
    \[
      \Supp(\mathcal{L}) 
      = \bigcup_{\mathcal{F} \in \mathcal{L}} \Supp(\mathcal{F}) 
      = \{ x \in X \mid \mathcal{F}_x \not= 0 \text{ for some } \mathcal{F} \in \mathcal{L} \};
    \]
    \item For a subset $Z \subseteq X$, we write $\Supp^{-1}(Z) = \{ \mathcal{F} \in \Sh(X) \mid \Supp(\mathcal{F}) \subseteq Z \}$. 
\end{itemize}
Note that the properties (i) -- (iii) imply that $\Supp^{-1}(Z)$ is a localizing subcategory of $\Sh(X)$ for all subsets $Z \subseteq X$. 
These two functions define a Galois connection 
\[
  \left\{ \text{Localizing  subcategories of $\Sh(X)$} \right\} 
  \xrightleftharpoons[\Supp^{-1}]{\Supp}  
  \left\{ \text{Subsets of $X$} \right\} 
\]
(see \cite[Remark 2.4]{supportTheoryYu}). 
In particular, we have $\mathcal{L} \subseteq (\Supp^{-1} \circ \Supp)(\mathcal{L})$ for all localizing subcategories $\mathcal{L}$ of $\Sh(X)$, and $(\Supp \circ \Supp^{-1})(Z) \subseteq Z$ for all subsets $Z \subseteq X$. 

The first step in classifying the localizing subcategories of $\Sh(X)$ is to force the support map to be surjective by restricting its codomain. 
In \cite{supportTheoryYu}, the notion of a supp-subset was introduced for this purpose. 
We, on the other hand, do not need this terminology, as we will see that the collection of the so-called Skula-open subsets will be the right choice for the codomain. 

Let $X$ be a topological space. 
Then the \emph{Skula topology} on $X$ is the topology generated by the open and closed sets of $X$. 
Equivalently, it is the topology on $X$ with a basis of locally closed sets of the original topology. 
We denote by $\Sk(X)$ the set $X$ equipped with the Skula topology. 
The open subsets of $\Sk(X)$ are called the \emph{Skula-open} subsets of $X$.

\begin{proposition}\label{Prop:OneSideInverseForSuppSets}
Let $X$ be a topological space, and let $Z \subseteq X$ be Skula-open. 
Then 
\[
  Z = (\Supp \circ \Supp^{-1})(Z). 
\]
\end{proposition}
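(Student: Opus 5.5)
The inclusion $(\Supp \circ \Supp^{-1})(Z) \subseteq Z$ holds for every subset $Z$ by the Galois connection, so only the reverse inclusion needs proof. Fix $x \in Z$. We will exhibit a sheaf $\mathcal{F}$ with $x \in \Supp(\mathcal{F})$ and $\Supp(\mathcal{F}) \subseteq Z$; then $\mathcal{F} \in \Supp^{-1}(Z)$ and $x \in \Supp(\Supp^{-1}(Z))$.

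Since the locally closed subsets of $X$ form a basis of the Skula topology, there is a locally closed set $W$ with $x \in W \subseteq Z$. We take $\mathcal{F} = k_W$, the extension by zero of the constant sheaf on $W$ along the inclusion $W \to X$. Extension by zero is defined for locally closed subsets, as recalled in the subsection on basic notation.

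It remains to compute the stalks of $k_W$. For $y \in W$ we have $(k_W)_y = k \neq 0$, and for $y \notin W$ we have $(k_W)_y = 0$. Hence $\Supp(k_W) = W$. This contains $x$ and is contained in $Z$, which completes the argument.

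The only step needing any care is this stalk computation, and it is the standard fact that $i_!$ has stalks $(i_!\mathcal{G})_y = \mathcal{G}_y$ for $y \in W$ and $0$ otherwise. We expect no real obstacle here.
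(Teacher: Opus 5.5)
Your proposal is correct and follows essentially the same route as the paper: you take the inclusion $\Supp(\Supp^{-1}(Z)) \subseteq Z$ from the Galois connection, and for each $x \in Z$ you pick a locally closed $W$ with $x \in W \subseteq Z$. The extension by zero $k_W$ then has support $W$, so it lies in $\Supp^{-1}(Z)$ and witnesses $x \in \Supp(\Supp^{-1}(Z))$.
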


\begin{proof}
First, recall that $(\Supp \circ \Supp^{-1})(Z) \subseteq Z$ for every subset $Z \subseteq X$. 
Let $x \in Z$. 
Since $Z$ is Skula-open, there exists a locally closed subset $Z_x \subseteq Z$ containing $x$. Consider the constant sheaf $k_{Z_x}$ on $Z_x$ and its extension by zero $i_!k_{Z_x}$ to $X$, where $i \colon Z_x \to X$ is the inclusion. 
By definition, $\Supp(i_! k_{Z_x}) = Z_x \subseteq Z$, so $i_! k_{Z_x} \in \Supp^{-1}(Z)$. 
Moreover, since $(i_! k_{Z_x})_x \not= 0$, we conclude that $x \in (\Supp \circ \Supp^{-1})(Z)$. 
\end{proof}

\begin{proposition}\label{Prop:SkulaOpenCorrespondsSupports}
Let $X$ be a topological space, and let $Z \subseteq X$. 
Then the following are equivalent: 
\begin{enumerate}[(i)]
    \item $Z$ is Skula-open; 
    \item $Z = \Supp(\mathcal{F})$ for some sheaf $\mathcal{F}$ on $X$; 
    \item $Z = \Supp(\mathcal{L})$ for some localizing subcategory $\mathcal{L}$ of $\Sh(X)$. 
\end{enumerate}
\end{proposition}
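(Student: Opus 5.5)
I will prove the cycle (i) $\Rightarrow$ (ii) $\Rightarrow$ (iii) $\Rightarrow$ (i).

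For (i) $\Rightarrow$ (ii), let $Z$ be Skula-open. As in the proof of Proposition \ref{Prop:OneSideInverseForSuppSets}, every $x \in Z$ lies in a locally closed set $Z_x \subseteq Z$. Put
\[
  \mathcal{F} = \bigoplus_{x \in Z} k_{Z_x}.
\]
By property (iii) of the support, $\Supp(\mathcal{F}) = \bigcup_{x \in Z} Z_x = Z$.

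For (ii) $\Rightarrow$ (iii), let $\mathcal{F}$ be a sheaf with $Z = \Supp(\mathcal{F})$, and take $\mathcal{L} = \Supp^{-1}(Z)$. This is a localizing subcategory by properties (i)--(iii) of the support. It contains $\mathcal{F}$, so $Z \subseteq \Supp(\mathcal{L})$. The reverse inclusion $\Supp(\mathcal{L}) \subseteq Z$ is the Galois-connection inequality $\Supp \circ \Supp^{-1} \subseteq \id$. Hence $Z = \Supp(\mathcal{L})$.

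For (iii) $\Rightarrow$ (i), write $\Supp(\mathcal{L}) = \bigcup_{\mathcal{F} \in \mathcal{L}} \Supp(\mathcal{F})$. Skula-open sets are closed under unions, so it suffices to show that $\Supp(\mathcal{F})$ is Skula-open for every sheaf $\mathcal{F}$. This is the main step. Fix $x \in \Supp(\mathcal{F})$. By Lemma \ref{Lemma:SectionInduceMonomorf}(ii) there are an open $U \ni x$ and a section $s \in \mathcal{F}(U)$ with $s_x \neq 0$. Its support $\supp(s)$ is closed in $U$, hence locally closed in $X$, and it contains $x$. Moreover $\supp(s) \subseteq \Supp(\mathcal{F})$, because $s_y \neq 0$ forces $\mathcal{F}_y \neq 0$. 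Therefore
\[
  \Supp(\mathcal{F}) = \bigcup_{U \text{ open},\, s \in \mathcal{F}(U)} \supp(s)
\]
is a union of locally closed sets. Since locally closed sets form a basis of the Skula topology, $\Supp(\mathcal{F})$ is Skula-open, and so is $\Supp(\mathcal{L})$.

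The argument uses only the definition of the Skula topology, Lemma \ref{Lemma:SectionInduceMonomorf}, and the support axioms. The only point needing care is that supports of sections are locally closed, and this is exactly what makes the Skula topology the right codomain for $\Supp$.
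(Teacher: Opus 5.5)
Your proof is correct and uses the same ingredients as the paper: supports of sections are locally closed, the sheaves $k_{Z_x}$ realize Skula-open sets, and $\Supp^{-1}(Z)$ is localizing. These are arranged in the cycle (i)$\Rightarrow$(ii)$\Rightarrow$(iii)$\Rightarrow$(i) instead of the paper's (ii)$\Rightarrow$(i), (iii)$\Rightarrow$(ii), (i)$\Rightarrow$(iii), and the only real difference is that the paper passes from $\mathcal{L}$ to a single sheaf by summing witnesses inside $\mathcal{L}$, whereas you go through Skula-openness and build $\bigoplus_{x\in Z} k_{Z_x}$ directly.
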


\begin{proof}
(ii) $\Rightarrow$ (i): 
Suppose that $Z = \Supp(\mathcal{F})$ for some sheaf $\mathcal{F}$ on $X$. 
We know that 
\[
  \Supp(\mathcal{F}) = \bigcup_{U}\bigcup_{s \in \mathcal{F}(U)} \supp(s).
\]
Therefore, $Z$ is Skula-open as a union of locally closed subsets.

(iii) $\Rightarrow$ (ii): 
Assume that $Z = \Supp(\mathcal{L})$ for some localizing subcategory $\mathcal{L}$ of $\Sh(X)$. For each $x \in Z$, choose a sheaf $\mathcal{F}(x) \in \mathcal{L}$ such that $\mathcal{F}(x)_x \not= 0$. 
Consider the direct sum $\mathcal{F} = \bigoplus_{x \in Z} \mathcal{F}(x)$. 
By construction, $\Supp(\mathcal{L}) \subseteq \bigcup_{x \in Z} \Supp(\mathcal{F}(x)) = \Supp(\mathcal{F})$. 
Conversely, as a localizing subcategory, $\mathcal{L}$ is closed under direct sums, so $\mathcal{F} \in \mathcal{L}$. 
In particular, $\Supp(\mathcal{F}) \subseteq \Supp(\mathcal{L})$. 
Thus, $\Supp(\mathcal{L}) = \Supp(\mathcal{F})$. 

(i) $\Rightarrow$ (iii): 
Finally, suppose that $Z$ is Skula-open. 
By Proposition \ref{Prop:OneSideInverseForSuppSets}, $Z = \Supp(\Supp^{-1}(Z))$. 
Hence, we may take $\mathcal{L} = \Supp^{-1}(Z)$.
\end{proof}

\begin{remark}\label{Rmk:SkulaOpenByLocSubCat}
Let $X$ be a topological space. 
We now have a Galois connection, where $\Supp$ is surjective: 
\[
  \{ \text{Localizing subcategories of } \Sh(X) \} \xrightleftharpoons[\Supp^{-1}]{\Supp} \{ \text{Skula-open subsets of } X \}. 
\]
However, it is not always a bijection. 
Consider, for example, the space $X = \Rset$ with the Alexandrov topology. 
Then the full subcategory of ephemeral modules $\Eph$ is a (bi)localizing subcategory of $\Sh(\Rset)$ (see Section \ref{Sec:ApplicationsToTDA}). 
The sheaf supported by a singleton $k_x$ is in $\Eph$ for all $x \in \Rset$. 
Thus, $\Supp(\Eph) = \Rset$. 
Obviously, the whole $\Sh(\Rset)$ is a localizing subcategory of itself, and $\Supp(\Sh(\Rset)) = \Rset$. 
Therefore, $\Supp$ cannot be an injection in the case of $X = \Rset$. 
For more details, see \cite{EphModAndScottShOverContPoset}. 
\end{remark}

%%%%%%%%%%%%%%%%%%%%%%%%%%%%%%%%%%%%%%%%%%%%%%%%%%%%%%%%%%%%
\section{Spectrums of Sheaves}

Let $X$ be a topological space. 
For brevity, we write $\InjSpec(X)$, $\Sp(X)$ and $\ASpec(X)$ for the injective spectrum, Gabriel spectrum and atom spectrum of the category of sheaves $\Sh(X)$, respectively,

%%%%%%%%%%%%%%%%%%%%%%%%%%%%%%%%%%%%%%%%%%%%%%%%%%%%
\subsection{Maps Between Spectra}

Let $X$ be a topological space. 
We denote by $\sky \colon X \to \InjSpec(X)$ the function, which maps a point to the corresponding skyscraper sheaf. 
In other words, $x \mapsto \sky(x)$ for all $x \in X$.

\begin{proposition}\label{Prop:SkyContMap}
Let $X$ be a topological space. 
Then 
\begin{enumerate}[(i)]
    \item 
    $\sky^{-1}(\mathcal{O}(\mathcal{L})) = \Supp(\mathcal{L})$ for all localizing subcategories $\mathcal{L}$ of $\Sh(X)$; 

    \item 
    $\sky^{-1}(\mathcal{O}(\Supp^{-1}(Z))) = Z$ for all Skula-open subsets $Z \subseteq X$; 

    \item 
    $\sky \colon \Sk(X) \to \Sp(X)$ is continuous. 
\end{enumerate}
\end{proposition}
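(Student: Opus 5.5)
Part (i) is the substantive step; (ii) and (iii) follow from it and the results of the previous subsection. For (i), unwind the definitions. By definition,
\[
  x \in \sky^{-1}(\mathcal{O}(\mathcal{L})) \iff \sky(x) \in \mathcal{O}(\mathcal{L}) \iff \Hom(\mathcal{F}, \sky(x)) \neq 0 \text{ for some } \mathcal{F} \in \mathcal{L}.
\]
Proposition \ref{Prop:SheafHomSuppSky} states that $\Hom(\mathcal{F}, \sky(x)) \neq 0$ holds exactly when $x \in \Supp(\mathcal{F})$. The last condition therefore says that $x \in \Supp(\mathcal{F})$ for some $\mathcal{F} \in \mathcal{L}$, which is precisely the definition of $x \in \Supp(\mathcal{L})$.

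One subtlety needs checking. Membership in $\mathcal{O}(\mathcal{L})$ is defined for elements of $\InjSpec(X)$, that is, for isomorphism classes of indecomposable injectives. We must confirm that $\sky(x)$ is indeed indecomposable injective for an arbitrary $X$, not only a sober one. This holds by Lemma \ref{Lemma:IndicatorSheafFlabbyForClosedIrred} applied to the closed irreducible set $\overline{\{x\}}$: there is an isomorphism $\sky(x) \cong k_{\overline{\{x\}}}$, since both are flabby and have value $k$ on every open set meeting $\overline{\{x\}}$, equivalently every open set containing $x$. Alternatively, we can cite the remarks before the lemma, which note that $\sky(x)$ is injective and indecomposable. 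Hence the map $\sky$ is well defined, and the membership test is legitimate.

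For (ii), let $Z$ be Skula-open. We apply (i) with $\mathcal{L} = \Supp^{-1}(Z)$, which is a localizing subcategory by properties (i)--(iii) of the support. This gives
\[
  \sky^{-1}(\mathcal{O}(\Supp^{-1}(Z))) = \Supp(\Supp^{-1}(Z)).
\]
By Proposition \ref{Prop:OneSideInverseForSuppSets}, the right-hand side equals $Z$.

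For (iii), the open sets of $\Sp(X)$ are exactly the sets $\mathcal{O}(\mathcal{L})$, so it suffices to check preimages of these. By (i), the preimage of $\mathcal{O}(\mathcal{L})$ is $\Supp(\mathcal{L})$. By Proposition \ref{Prop:SkulaOpenCorrespondsSupports}, (iii) $\Rightarrow$ (i), this set is Skula-open. Hence $\sky \colon \Sk(X) \to \Sp(X)$ is continuous.

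None of these steps is a real obstacle. The only point requiring care is the well-definedness of $\sky$ into $\InjSpec(X)$ for non-sober $X$, and the second paragraph handles it.
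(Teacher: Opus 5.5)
Your proposal is correct and follows the paper's proof essentially step for step. Part (i) is the same chain of equivalences through Proposition~\ref{Prop:SheafHomSuppSky}, part (ii) applies (i) to $\Supp^{-1}(Z)$ together with Proposition~\ref{Prop:OneSideInverseForSuppSets}, and part (iii) combines (i) with Proposition~\ref{Prop:SkulaOpenCorrespondsSupports}. Your extra check that $\sky(x)\in\InjSpec(X)$ for non-sober $X$ is a harmless addition; the paper takes it from its earlier remark that $\sky(x)$ is injective and indecomposable. Note that $\sky(x)\cong k_{\overline{\{x\}}}$ holds because both have value $k$ with identity restrictions on the opens containing $x$ and are $0$ elsewhere; flabbiness plays no role in that isomorphism.
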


\begin{proof}
(i) 
By Proposition \ref{Prop:SheafHomSuppSky}, 
\begin{align*}
    x \in \sky^{-1}(\mathcal{O}(\mathcal{L})) 
    &\iff \sky(x) \in \mathcal{O}(\mathcal{L}) \\
    &\iff \Hom(\mathcal{F}, \sky(x)) \not= 0 \text{ for some } \mathcal{F} \in \mathcal{L} \\
    &\iff x \in \Supp(\mathcal{F}) \text{ for some } \mathcal{F} \in \mathcal{L} \\
    &\iff x \in \Supp(\mathcal{L}). 
\end{align*}

(ii) 
This is a consequence of (i) and Proposition \ref{Prop:OneSideInverseForSuppSets}:
\[
  \sky^{-1}(\mathcal{O}(\Supp^{-1}(Z))) = \Supp(\Supp^{-1}(Z)) = Z. 
\]

(iii) 
This follows from (i) and Proposition \ref{Prop:SkulaOpenCorrespondsSupports}. 
\end{proof}

Recall that there is a well-defined map $\overline{E} \colon \ASpec(X) \to \InjSpec(X)$ such that $\overline{E}(\overline{H}) = E(H)$ for all monoform objects $H$ of $\Sh(X)$. 
By Lemma \ref{Lemma:ConstantSheafProperties}, $\sky(x)$ is monoform, and by Lemma \ref{Lemma:IndicatorSheafFlabbyForClosedIrred}, it is indecomposable injective. 
Thus, we have an atom $\overline{\sky(x)} \in \ASpec(X)$, and $\overline{E}(\overline{\sky(x)}) = E(\sky(x)) = \sky(x)$ for all $x \in X$.

\begin{proposition}\label{Prop:SuppAndASuppConnection}
Let $X$ be a topological space, and let $\mathcal{F}$ be a sheaf on $X$. 
Then the following are equivalent: 
\begin{enumerate}[(i)]
    \item $x \in \Supp(\mathcal{F})$; 

    \item $\sky(x) \in \mathcal{O}(\Supp^{-1}(\Supp(\mathcal{F})))$; 

    \item $\overline{\sky(x)} \in \ASupp(\mathcal{F})$. 
\end{enumerate}
\end{proposition}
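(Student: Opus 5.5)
The plan is to show that (i) is equivalent to (ii) and that (i) is equivalent to (iii), each time translating through Proposition \ref{Prop:SheafHomSuppSky}. For the first equivalence, Proposition \ref{Prop:SkulaOpenCorrespondsSupports} tells us that $Z \coloneq \Supp(\mathcal{F})$ is Skula-open. Proposition \ref{Prop:SkyContMap}(ii) then gives $\sky^{-1}(\mathcal{O}(\Supp^{-1}(Z))) = Z$, and this equality says exactly that $x \in Z$ if and only if $\sky(x) \in \mathcal{O}(\Supp^{-1}(Z))$. So (i) and (ii) are equivalent with no further work.

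For (i) $\Leftrightarrow$ (iii), I will use the description of atom supports via injective envelopes recalled in the preliminaries:
\[
  \ASupp(\mathcal{F}) = \{ \overline{H} \in \ASpec(X) \mid \Hom(\mathcal{F}, E(H)) \neq 0 \}.
\]
Apply this with $H = \sky(x)$. This is legitimate because $\sky(x) = k_{\overline{\{x\}}}$ is monoform by Lemma \ref{Lemma:ConstantSheafProperties}, since $\overline{\{x\}}$ is closed and irreducible. It is also injective by Lemma \ref{Lemma:IndicatorSheafFlabbyForClosedIrred}, so $E(\sky(x)) = \sky(x)$. Hence $\overline{\sky(x)} \in \ASupp(\mathcal{F})$ if and only if $\Hom(\mathcal{F}, \sky(x)) \neq 0$. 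By Proposition \ref{Prop:SheafHomSuppSky}, that holds if and only if $x \in \Supp(\mathcal{F})$.

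One subtlety needs care: membership $\overline{\sky(x)} \in \ASupp(\mathcal{F})$ is a statement about the atom, not about a particular representative. The envelope description handles this, because atom-equivalent monoform objects have isomorphic injective envelopes (Kanda), so the condition $\Hom(\mathcal{F}, E(H)) \neq 0$ does not depend on the choice of representative $H$. Apart from that, every step is a direct citation, so I do not anticipate any real obstacle.
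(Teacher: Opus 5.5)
Your proposal is correct and follows essentially the same route as the paper: (i)$\Leftrightarrow$(ii) via Proposition~\ref{Prop:SkyContMap}(ii), and (i)$\Leftrightarrow$(iii) via the envelope description of $\ASupp$ combined with Proposition~\ref{Prop:SheafHomSuppSky}. You also make explicit that $\Supp(\mathcal{F})$ is Skula-open (Proposition~\ref{Prop:SkulaOpenCorrespondsSupports}) and that $E(\sky(x)) = \sky(x)$. The paper leaves both of these implicit, and they are exactly what is needed to justify each step.
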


\begin{proof}
(i) $\Leftrightarrow$ (ii) : 
By Proposition \ref{Prop:SkyContMap} (ii), 
\[
    \sky(x) \in \mathcal{O}(\Supp^{-1}(\Supp(\mathcal{F}))) 
    \iff x \in \sky^{-1}(\mathcal{O}(\Supp^{-1}(\Supp(\mathcal{F})))) = \Supp(\mathcal{F})
\]

(i) $\Leftrightarrow$ (iii) : 
Recall that $\ASupp(M) = \{ \overline{H} \in \ASpec(\mathcal{G}) \mid \Hom(M, E(H)) \not= 0 \}$ for all objects $M \in \mathcal{G}$. 
Thus, the claim follows from Proposition \ref{Prop:SheafHomSuppSky}: 
\[
  x \in \Supp(\mathcal{F}) 
  \iff \Hom(\mathcal{F}, \sky(x)) \not= 0 
  \iff \overline{\sky(x)} \in \ASupp(\mathcal{F}). 
\]
\end{proof}

%%%%%%%%%%%%%%%%%%%%%%%%%%%%%%%%%%%%%%%%%%%%%%%%%%%%%%%%%%
\subsection{Homeomorphism of the Gabriel Spectrum and the Atom Spectrum}

We first treat the case of sober spaces, beginning with an identification of the underlying sets of the atom spectrum and the injective spectrum.

\begin{proposition}\label{Prop:AtomToGabrielBijectionSober}
Let $X$ be a sober topological space. 
Then the map $\overline{E} \colon \ASpec(X) \to \InjSpec(X)$ is a bijection. 
\end{proposition}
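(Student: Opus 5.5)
Since the paper already records that $\overline{E}$ is always injective, the plan reduces to proving surjectivity; injectivity can be quoted from the general theory (atom-equivalent monoform objects have isomorphic injective envelopes and conversely, \cite[Lemma 5.8]{KandaAtomSpec1}).

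For surjectivity, let $\mathcal{F}$ be an indecomposable injective sheaf on $X$, i.e.\ a point of $\InjSpec(X)$. Because $X$ is sober, Theorem~\ref{Thm:IndInjAreSkysrc} gives $\mathcal{F} \cong \sky(x)$ for some $x \in X$.

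Next we need $\sky(x)$ to be monoform. We have $\sky(x) = k_{\overline{\{x\}}}$, and $\overline{\{x\}}$ is a closed, hence locally closed, irreducible subset. So Lemma~\ref{Lemma:ConstantSheafProperties} shows that $\sky(x)$ is monoform. This gives an atom $\overline{\sky(x)} \in \ASpec(X)$.

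As observed just before Proposition~\ref{Prop:SuppAndASuppConnection}, $\sky(x)$ is injective by Lemma~\ref{Lemma:IndicatorSheafFlabbyForClosedIrred}, so it is its own injective envelope. Hence
\[
  \overline{E}\bigl(\overline{\sky(x)}\bigr) = E(\sky(x)) = \sky(x) \cong \mathcal{F},
\]
and $\overline{E}$ is surjective.

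There is essentially no obstacle here. The one point needing care is that the identification $E(\sky(x)) = \sky(x)$ is taken up to isomorphism. This is fine, since points of $\InjSpec(X)$ are isomorphism classes.

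Combining surjectivity with the general injectivity of $\overline{E}$, the map $\overline{E}$ is a bijection. Composing with $\sky$, this also shows that every atom of $\Sh(X)$ is of the form $\overline{\sky(x)}$.
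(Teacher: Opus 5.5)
Your proof is correct and follows essentially the same route as the paper: injectivity is quoted from the general theory, and surjectivity comes from Theorem~\ref{Thm:IndInjAreSkysrc} together with the fact that $\sky(x)$ is monoform and injective, so $\overline{E}(\overline{\sky(x)}) = \sky(x)$. Your citation of Lemma~\ref{Lemma:ConstantSheafProperties} for monoformness is the more precise reference, since the paper cites Lemma~\ref{Lemma:IndicatorSheafFlabbyForClosedIrred}, which only gives flabbiness and indecomposability.
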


\begin{proof}
The map is always an injection. 
By Theorem \ref{Thm:IndInjAreSkysrc}, $\InjSpec(X)$ consists of the skyscraper sheaves $\sky(x)$, where $x \in X$. 
The skyscraper sheaf $\sky(x)$ is monoform (Lemma \ref{Lemma:IndicatorSheafFlabbyForClosedIrred}) and injective, so $\overline{E}(\overline{\sky(x)}) = E(\sky(x)) = \sky(x)$ proving the surjectivity. 
\end{proof}

\begin{proposition}\label{Prop:AtomToGabrielContSober}
Let $X$ be a sober topological space, and let $\mathcal{L}$ be a localizing subcategory of $\Sh(X)$. Then $\overline{E}^{-1}(\mathcal O(\mathcal L))$ is atom-open. More precisely,
$\overline{E}^{-1}(\mathcal{O}(\mathcal{L})) = \ASupp(\mathcal{F})$ for some $\mathcal{F} \in \mathcal{L}$. 
In other words, $\overline{E} \colon \ASpec(X) \to \Sp(X)$ is a continuous map. 
\end{proposition}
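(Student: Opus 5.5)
We need to show that $\overline{E}^{-1}(\mathcal{O}(\mathcal{L})) = \ASupp(\mathcal{F})$ for a suitable $\mathcal{F} \in \mathcal{L}$. The plan is to translate both sides into statements about points of $X$ via skyscraper sheaves, and then use the direct-sum construction from the proof of Proposition \ref{Prop:SkulaOpenCorrespondsSupports}.

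First, since $X$ is sober, Proposition \ref{Prop:AtomToGabrielBijectionSober} and Theorem \ref{Thm:IndInjAreSkysrc} show that every atom of $\Sh(X)$ is of the form $\overline{\sky(x)}$ with $x \in X$, and that $\overline{E}(\overline{\sky(x)}) = \sky(x)$. Hence
\[
  \overline{E}^{-1}(\mathcal{O}(\mathcal{L})) = \{ \overline{\sky(x)} \mid \sky(x) \in \mathcal{O}(\mathcal{L}) \} = \{ \overline{\sky(x)} \mid x \in \Supp(\mathcal{L}) \},
\]
where the second equality is Proposition \ref{Prop:SkyContMap}(i), i.e.\ $\sky^{-1}(\mathcal{O}(\mathcal{L})) = \Supp(\mathcal{L})$.

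Next, as in the proof of (iii) $\Rightarrow$ (ii) of Proposition \ref{Prop:SkulaOpenCorrespondsSupports}, for each $x \in \Supp(\mathcal{L})$ we choose $\mathcal{F}(x) \in \mathcal{L}$ with $\mathcal{F}(x)_x \neq 0$. Put $\mathcal{F} = \bigoplus_{x \in \Supp(\mathcal{L})} \mathcal{F}(x)$. Because $\mathcal{L}$ is closed under direct sums, $\mathcal{F} \in \mathcal{L}$. Moreover $\Supp(\mathcal{F}) = \Supp(\mathcal{L})$: the support of a direct sum is the union of the supports, each $\mathcal{F}(x)$ contributes $x$, and every summand lies in $\mathcal{L}$.

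Finally, we describe $\ASupp(\mathcal{F})$. Every atom is some $\overline{\sky(x)}$, and by Proposition \ref{Prop:SuppAndASuppConnection}, $\overline{\sky(x)} \in \ASupp(\mathcal{F})$ if and only if $x \in \Supp(\mathcal{F}) = \Supp(\mathcal{L})$. Comparing with the first step gives $\overline{E}^{-1}(\mathcal{O}(\mathcal{L})) = \ASupp(\mathcal{F})$. Since the basic Gabriel open sets $\mathcal{O}(\mathcal{L})$ constitute the whole Gabriel topology, and each $\ASupp(\mathcal{F})$ is open in the atom spectrum, $\overline{E}$ is continuous.

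The only subtle point is that the description of $\ASupp$ must range over all atoms. This is exactly where sobriety is used, through the surjectivity in Proposition \ref{Prop:AtomToGabrielBijectionSober} together with the injectivity of $\overline{E}$, so no atoms outside the $\overline{\sky(x)}$ can occur. Everything else is bookkeeping with earlier results.
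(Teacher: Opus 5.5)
Your proof is correct and follows the paper's argument. Both reduce to the chain $\overline{\sky(x)} \in \overline{E}^{-1}(\mathcal{O}(\mathcal{L})) \iff x \in \Supp(\mathcal{L}) = \Supp(\mathcal{F}) \iff \overline{\sky(x)} \in \ASupp(\mathcal{F})$ via Propositions~\ref{Prop:SkyContMap}(i) and~\ref{Prop:SuppAndASuppConnection}. The only difference is that you rebuild the direct sum $\mathcal{F}=\bigoplus_x \mathcal{F}(x)$ explicitly, which shows that $\mathcal{F}$ can be taken in $\mathcal{L}$ as the statement requires; the paper instead cites Proposition~\ref{Prop:SkulaOpenCorrespondsSupports}, whose proof of (iii)$\Rightarrow$(ii) is exactly this construction.
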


\begin{proof}
By Proposition \ref{Prop:SkulaOpenCorrespondsSupports},  
$\Supp(\mathcal{L}) = \Supp(\mathcal{F})$ for some sheaf $\mathcal{F}$ on $X$. 
Now, by Propositions \ref{Prop:SkyContMap} (i) and \ref{Prop:SuppAndASuppConnection}, 
\begin{align*}
  \overline{\sky(x)} \in \overline{E}^{-1}(\mathcal{O}(\mathcal{L})) 
  &\iff \sky(x) \in \mathcal{O}(\mathcal{L}) \\
  &\iff x \in \Supp(\mathcal{L}) \\
  &\iff x \in \Supp(\mathcal{F}) \\
  &\iff \overline{\sky(x)} \in \ASupp(\mathcal{F}). 
\end{align*}
\end{proof}

It remains to show that the map is open.

\begin{proposition}\label{Prop:EIsOpenMapCaseSober}
Let $X$ be a sober topological space, and let $\mathcal{F}$ be a sheaf on $X$. 
Then $\overline{E}(\ASupp(\mathcal{F})) = \mathcal{O}(\Supp^{-1}(\Supp(\mathcal{F})))$. 
In particular, $\overline{E} \colon \ASpec(X) \to \Sp(X)$ is an open map. 
\end{proposition}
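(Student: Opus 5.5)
The identity will follow by comparing both sides pointwise through the bijection of Proposition~\ref{Prop:AtomToGabrielBijectionSober}. Since $X$ is sober, Theorem~\ref{Thm:IndInjAreSkysrc} says every element of $\InjSpec(X)$ is of the form $\sky(x)$ for some $x\in X$. By Proposition~\ref{Prop:AtomToGabrielBijectionSober}, every atom of $\Sh(X)$ is then of the form $\overline{\sky(x)}$, because $\overline{E}$ is a bijection with $\overline{E}(\overline{\sky(x)})=\sky(x)$. Hence both sets in the claimed equality consist of skyscraper sheaves, and it suffices to test membership of $\sky(x)$ on each side.

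For the left-hand side, injectivity of $\overline{E}$ together with $\overline{E}(\overline{\sky(x)})=\sky(x)$ gives
\[
\sky(x)\in\overline{E}(\ASupp(\mathcal F)) \iff \overline{\sky(x)}\in\ASupp(\mathcal F).
\]
By Proposition~\ref{Prop:SuppAndASuppConnection}, (iii)$\Leftrightarrow$(i)$\Leftrightarrow$(ii), this holds if and only if $x\in\Supp(\mathcal F)$, and also if and only if $\sky(x)\in\mathcal O(\Supp^{-1}(\Supp(\mathcal F)))$. Since every point of $\InjSpec(X)$ is some $\sky(x)$, this yields the equality
\[
\overline{E}(\ASupp(\mathcal F))=\mathcal O(\Supp^{-1}(\Supp(\mathcal F))).
\]

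For openness, recall that the open subsets of $\ASpec(X)$ are exactly the sets $\ASupp(\mathcal F)$ for $\mathcal F\in\Sh(X)$ \cite[Proposition 3.2]{KandaAtomSpec3}. The equality shows that each such set is mapped to a basic Gabriel open set, since $\Supp^{-1}(\Supp(\mathcal F))$ is a localizing subcategory. Therefore $\overline{E}$ is open.

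There is no real obstacle. The one point to check is that the image computation uses injectivity of $\overline{E}$. Without it, membership of $\sky(x)$ in the image would only say that some atom mapping to $\sky(x)$ lies in $\ASupp(\mathcal F)$, not that $\overline{\sky(x)}$ does. Injectivity is supplied by Proposition~\ref{Prop:AtomToGabrielBijectionSober}.
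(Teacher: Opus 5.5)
Your proof is correct and takes the same approach as the paper. The paper reads off $\overline{E}^{-1}(\mathcal{O}(\Supp^{-1}(\Supp(\mathcal{F})))) = \ASupp(\mathcal{F})$ from Proposition~\ref{Prop:SuppAndASuppConnection} and uses bijectivity of $\overline{E}$ (Proposition~\ref{Prop:AtomToGabrielBijectionSober}) to turn this preimage into the image; your pointwise membership check and your explicit openness remark just spell that out.
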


\begin{proof}
By Proposition \ref{Prop:SuppAndASuppConnection}, $\overline{E}^{-1}(\mathcal{O}(\Supp^{-1}(\Supp(\mathcal{F})))) = \ASupp(\mathcal{F})$. 
Since $\overline{E}$ is a bijection by Proposition \ref{Prop:AtomToGabrielBijectionSober}, we also have $\overline{E}(\ASupp(\mathcal{F})) = \mathcal{O}(\Supp^{-1}(\Supp(\mathcal{F})))$. 
\end{proof}

We now consider arbitrary topological spaces.

\begin{theorem}\label{Thm:AtomSpecHomeoGabriel}
Let $X$ be a topological space. 
Then the atom spectrum and the Gabriel spectrum are homeomorphic: 
\[
  \ASpec(X) \approx \Sp(X). 
\]
\end{theorem}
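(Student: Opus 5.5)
The plan is to reduce to the sober case, which is already settled by Propositions \ref{Prop:AtomToGabrielBijectionSober}, \ref{Prop:AtomToGabrielContSober} and \ref{Prop:EIsOpenMapCaseSober}. Together these say that for sober $X$ the map $\overline{E} \colon \ASpec(X) \to \Sp(X)$ is a continuous, open bijection, hence a homeomorphism.

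For an arbitrary space $X$, Remark \ref{Rmk:SheavesOnSobEqv} gives an equivalence of categories $\Phi \colon \Sh(X) \simeq \Sh(\Sob(X))$. First I will check that both spectra are invariant under equivalences of Grothendieck categories. An equivalence preserves injectivity, indecomposability, the subobject lattices, and therefore monoformity and atom-equivalence. It also commutes with injective envelopes, since essential monomorphisms are defined lattice-theoretically. Hence $\Phi$ induces bijections $\ASpec(X) \to \ASpec(\Sob(X))$ and $\InjSpec(X) \to \InjSpec(\Sob(X))$. These bijections commute with the respective maps $\overline{E}$, because $E(\Phi H) \cong \Phi E(H)$.

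Next I will check the topologies. The equivalence sends localizing subcategories bijectively to localizing subcategories, since Serre-ness and closure under direct sums are categorical. It also preserves the condition $\Hom(M,E) \neq 0$. Therefore $\mathcal{O}(\mathcal{L})$ corresponds to $\mathcal{O}(\Phi\mathcal{L})$, and the induced bijection on Gabriel spectra is a homeomorphism. Likewise, monoform subquotients correspond, so $\ASupp(M)$ corresponds to $\ASupp(\Phi M)$, and the induced bijection on atom spectra is a homeomorphism.

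The result is a commutative square in which three sides are homeomorphisms: the two horizontal maps induced by $\Phi$, and $\overline{E}$ for $\Sob(X)$. Hence $\overline{E} \colon \ASpec(X) \to \Sp(X)$ is itself a homeomorphism, which gives the natural homeomorphism of the statement.

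The only point needing care is this invariance step, in particular that the map $\overline{E}$ is compatible with the equivalence. Everything used in that step is intrinsic to the abelian category structure, so I expect it to be routine. I will state it briefly, noting that every definition involved (monoform, atom, $\ASupp$, localizing subcategory, $\mathcal{O}(\mathcal{L})$) refers only to subobjects, quotients, direct sums and $\Hom$.
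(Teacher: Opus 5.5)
Your proposal is correct and matches the paper's proof: the sober case via Propositions~\ref{Prop:AtomToGabrielBijectionSober}, \ref{Prop:AtomToGabrielContSober} and \ref{Prop:EIsOpenMapCaseSober}, then transport along the equivalence $\Sh(X)\simeq\Sh(\Sob(X))$. Your invariance check (both topologies preserved, and $\overline{E}$ compatible with the equivalence) spells out what the paper leaves implicit in its chain $\ASpec(X)\approx\ASpec(\Sob(X))\approx\Sp(\Sob(X))\approx\Sp(X)$, and it gives the slightly stronger conclusion that $\overline{E}$ itself is the homeomorphism for arbitrary $X$.
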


\begin{proof}
Suppose first that $X$ is sober. 
By Proposition~\ref{Prop:AtomToGabrielBijectionSober}, the map 
$\overline{E} \colon \ASpec(X) \to \Sp(X)$ is a bijection. 
Moreover, it is continuous by Proposition~\ref{Prop:AtomToGabrielContSober} and open by Proposition~\ref{Prop:EIsOpenMapCaseSober}. Hence \(\overline{E}\) is a homeomorphism.

Now let $X$ be arbitrary. By Remark \ref{Rmk:SheavesOnSobEqv}, there is an equivalence of  categories $\Sh(X) \simeq \Sh(\Sob(X))$. Therefore
\[
  \ASpec(X) \approx \ASpec(\Sob(X)) \approx \Sp(\Sob(X)) \approx \Sp(X),
\]
where the middle homeomorphism follows from the sober case.
\end{proof}

%%%%%%%%%%%%%%%%%%%%%%%%%%%%%%%%%%%%%%%%%%%%%%%%%%%%%%%
\subsection{Gabriel Spectrum of Sheaves}

We are now going to prove the Main Theorem (Corollary \ref{Cor:GabrielIsSkulaSob}). We observe first the following:

\begin{proposition}\label{Prop:SkyInjBij}
Let $X$ be a topological space. 
\begin{enumerate}[(i)]
    \item 
    If $X$ is $T_0$, then $\sky \colon X \to \InjSpec(X)$ an injection; 

    \item 
    If $X$ is sober, then $\sky \colon X \to \InjSpec(X)$ a bijection. 
\end{enumerate}
\end{proposition}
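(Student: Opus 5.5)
For (i), we will show that $\sky(x)\cong\sky(y)$ forces $x=y$ whenever $X$ is $T_0$. The invariant to use is the support. As recalled before Lemma~\ref{Lemma:IndicatorSheafFlabbyForClosedIrred}, $\Supp(\sky(x))=\overline{\{x\}}$. Isomorphic sheaves have isomorphic stalks, so an isomorphism $\sky(x)\cong\sky(y)$ gives $\overline{\{x\}}=\overline{\{y\}}$. In a $T_0$ space two points with the same closure coincide, because otherwise there would be an open set containing exactly one of them, and then that point would not lie in the closure of the other. Hence $x=y$ and $\sky$ is injective.

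An alternative route avoids the support computation. By Proposition~\ref{Prop:SheafHomSuppSky}, $\Hom(\mathcal{F},\sky(x))\neq 0$ holds exactly when $x\in\Supp(\mathcal{F})$. Test this with $\mathcal{F}=k_U$ for each open $U\subseteq X$. This shows that $\sky(x)$ determines the set of open neighbourhoods of $x$, and in a $T_0$ space that set determines $x$.

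For (ii), every sober space is $T_0$, as recalled at the start of the subsection on sober spaces, so injectivity follows from (i). Surjectivity is Theorem~\ref{Thm:IndInjAreSkysrc}: every indecomposable injective sheaf on a sober space is isomorphic to $\sky(x)$ for some $x\in X$, so every class in $\InjSpec(X)$ lies in the image of $\sky$. We also need $\sky$ to land in $\InjSpec(X)$ at all. This holds because $\sky(x)=k_{\overline{\{x\}}}$ with $\overline{\{x\}}$ closed and irreducible, so $\sky(x)$ is indecomposable injective by Lemma~\ref{Lemma:IndicatorSheafFlabbyForClosedIrred}.

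There is no serious obstacle here. The only point needing care is the stalk computation $\sky(x)_y\neq 0$ if and only if $y\in\overline{\{x\}}$, which underlies $\Supp(\sky(x))=\overline{\{x\}}$. It holds because every open neighbourhood of $y$ contains $x$ precisely when $y$ lies in the closure of $x$.
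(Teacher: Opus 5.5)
Your proof is correct and follows the paper's argument. Part (i) rests on $\Supp(\sky(x))=\overline{\{x\}}$ together with the fact that point closures separate points in a $T_0$ space, and part (ii) combines (i) with Theorem~\ref{Thm:IndInjAreSkysrc}; you additionally make explicit the isomorphism-invariance of supports and that $\sky$ lands in $\InjSpec(X)$, which the paper leaves implicit.
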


\begin{proof}
(i) 
Recall that in a $T_0$-space, we have an equivalence: 
\[
  \overline{\{ x \}} = \overline{\{ y \} } \iff x = y. 
\]
Thus, $\sky(x) = \sky(y)$ if and only if $x = y$. 

(ii) 
Every sober space is $T_0$, so the injectivity follows from (i). 
The surjectivity follows from Theorem \ref{Thm:IndInjAreSkysrc}. 
\end{proof}

\begin{theorem}\label{Thm:ForSobSpaceGabrielIsSkula}
Let $X$ be a sober topological space. 
Then there exist a homeomorphism for the Gabriel spectrum: 
\[
   \Sp(X) \approx \Sk(X). 
\]
\end{theorem}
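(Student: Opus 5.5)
The homeomorphism will be the map $\sky \colon \Sk(X) \to \Sp(X)$, $x \mapsto \sky(x)$. Since $X$ is sober, Proposition \ref{Prop:SkyInjBij} (ii) shows that $\sky$ is a bijection onto $\InjSpec(X)$. Proposition \ref{Prop:SkyContMap} (iii) shows that it is continuous from the Skula topology to the Gabriel topology. It therefore only remains to prove that $\sky$ is open, or equivalently that $\sky^{-1}$ is continuous.

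For openness we use that $\sky$ is bijective, so images and preimages correspond: $\sky(Z)=W$ exactly when $Z=\sky^{-1}(W)$. Let $Z \subseteq X$ be Skula-open and set $\mathcal{L}=\Supp^{-1}(Z)$, which is a localizing subcategory by the remarks in Section \ref{Subsec:SupportTheory}. Proposition \ref{Prop:SkyContMap} (ii) gives $\sky^{-1}(\mathcal{O}(\Supp^{-1}(Z))) = Z$. Applying bijectivity,
\[
  \sky(Z) = \mathcal{O}(\Supp^{-1}(Z)),
\]
which is a basic Gabriel open set. Hence $\sky$ maps Skula-open sets to Gabriel-open sets, and so it is a homeomorphism.

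The same computation also gives the topology directly. Every Gabriel open set has the form $\mathcal{O}(\mathcal{L})$, and its preimage $\Supp(\mathcal{L})$ is Skula-open by Proposition \ref{Prop:SkulaOpenCorrespondsSupports}. Conversely, every Skula-open set is the preimage of a Gabriel open set. Under the bijection $\sky$, the two topologies therefore have exactly the same open sets. There is no real obstacle here. The only point needing care is that openness relies on surjectivity of $\sky$, which is exactly the sobriety input supplied by Theorem \ref{Thm:IndInjAreSkysrc}.
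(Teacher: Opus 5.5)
Your proof is correct and matches the paper's argument. The paper also takes the continuous bijection $\sky\colon \Sk(X)\to\Sp(X)$ from Propositions \ref{Prop:SkyInjBij} (ii) and \ref{Prop:SkyContMap}, and proves openness via $\sky(\Supp(\mathcal{L}))=\mathcal{O}(\mathcal{L})$, which is the identity you derive as $\sky(Z)=\mathcal{O}(\Supp^{-1}(Z))$ using part (ii) rather than part (i).
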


\begin{proof}
Since the map $\sky \colon \Sk(X) \to \Sp(X)$ is a bijection by Proposition \ref{Prop:SkyInjBij} (ii), it follows from Proposition \ref{Prop:SkyContMap} that $\sky(\Supp(\mathcal{L})) = \mathcal{O}(\mathcal{L})$ for all localizing subcategories $\mathcal{L}$ of $\Sh(X)$. 
In other words, $\sky$ is an open map, in addition being a continuous bijection. 
Thus, it is a homeomorphism. 
\end{proof}

\begin{corollary}\label{Cor:GabrielIsSkulaSob}
Let $X$ be a topological space. 
Then there exists a homeomorphism for the Gabriel spectrum: 
\[
   \Sp(X) \approx \Sk(\Sob(X)). 
\]
\end{corollary}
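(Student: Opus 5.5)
The plan is to reduce to the sober case, exactly as in the proof of Theorem~\ref{Thm:AtomSpecHomeoGabriel}. First I would check that the Gabriel spectrum is an invariant of the category up to equivalence. Let $\Phi \colon \mathcal{G} \to \mathcal{G}'$ be an equivalence of Grothendieck categories. It sends indecomposable injectives to indecomposable injectives, so it induces a bijection $\InjSpec(\mathcal{G}) \to \InjSpec(\mathcal{G}')$. It also sends localizing subcategories to localizing subcategories, since being a Serre subcategory and being closed under direct sums are both preserved by equivalences. Finally, $\Hom(M,E) \neq 0$ if and only if $\Hom(\Phi M, \Phi E) \neq 0$. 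Together these give $\Phi(\mathcal{O}(\mathcal{L})) = \mathcal{O}(\Phi(\mathcal{L}))$, where $\Phi(\mathcal{L})$ means the essential image. Hence the induced bijection carries basic Gabriel open sets onto basic Gabriel open sets, in both directions, and is a homeomorphism $\Sp(\mathcal{G}) \approx \Sp(\mathcal{G}')$.

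Second, I would apply this to the equivalence $\Sh(X) \simeq \Sh(\Sob(X))$ of Remark~\ref{Rmk:SheavesOnSobEqv}, which gives $\Sp(X) \approx \Sp(\Sob(X))$. The sobrification $\Sob(X)$ is sober by construction, so Theorem~\ref{Thm:ForSobSpaceGabrielIsSkula} gives $\Sp(\Sob(X)) \approx \Sk(\Sob(X))$. Composing the two homeomorphisms yields the claim.

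To make the homeomorphism explicit and natural, I would describe the composite map $\Sk(\Sob(X)) \to \Sp(X)$. It sends an irreducible closed set $F \subseteq X$ to the indecomposable injective $k_F$, which matches Corollary~\ref{Cor:IndInjAreIrredClosed} and Lemma~\ref{Lemma:IndicatorSheafFlabbyForClosedIrred}. For this I need to check that, under the frame isomorphism $\Open(X) \cong \Open(\Sob(X))$ given by $U \mapsto \diamond U$, the skyscraper sheaf $\sky(F)$ on $\Sob(X)$ corresponds to $k_F$ on $X$. Indeed, both assign $k$ to $U$ when $U \cap F \neq \emptyset$ and $0$ otherwise, with identity restrictions: on a nonempty open subset of the irreducible set $F$, the constant sheaf has value $k$ because such a subset is connected.

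The only step needing any care is the first one, the invariance of the Gabriel topology under equivalence, and it is formal. The small points are that an equivalence preserves injective envelopes and indecomposability, and that it preserves arbitrary coproducts, so that localizing subcategories go to localizing subcategories.
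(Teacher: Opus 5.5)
Your proposal is correct and follows the paper's proof: you transport along the equivalence $\Sh(X)\simeq\Sh(\Sob(X))$ of Remark~\ref{Rmk:SheavesOnSobEqv} and then apply Theorem~\ref{Thm:ForSobSpaceGabrielIsSkula} to the sober space $\Sob(X)$. Your check that the Gabriel spectrum is invariant under equivalences, and your explicit identification of the point $F\in\Sob(X)$ with $k_F$, supply details that the paper leaves implicit.
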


\begin{proof}
Note that there is an equivalence of sheaf categories $\Sh(X) \simeq \Sh(\Sob(X))$ (see Remark \ref{Rmk:SheavesOnSobEqv}). 
Thus, 
\[
  \Sp(X) 
  \approx \Sp(\Sob(X)) 
  \approx \Sk(\Sob(X)). 
\]
\end{proof}

%%%%%%%%%%%%%%%%%%%%%%%%%%%%%%%%%%%%%%%%%%%%%%%%%%%%%%%%%%%%%%%%%%%%%%%%
\subsection{Topological Properties of the Gabriel Spectrum}

In consequence of Corollary \ref{Cor:GabrielIsSkulaSob}, we can recognize plenty of topological properties of the Gabriel spectrum.

\begin{theorem}\label{Thm:GabrielSpectrumTopProperties}
Let $X$ be a topological space. 
Then the Gabriel spectrum $\Sp(X)$ 
\begin{enumerate}[(i)]
    \item is Hausdorff; 
    \item admits a basis of clopen subsets; 
    \item is zero-dimensional with respect to the small inductive dimension; 
    \item is totally disconnected. 
\end{enumerate}
\end{theorem}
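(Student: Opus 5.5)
By Corollary \ref{Cor:GabrielIsSkulaSob}, $\Sp(X) \approx \Sk(\Sob(X))$, and all four properties are invariant under homeomorphism. So it suffices to prove them for $\Sk(Y)$ where $Y = \Sob(X)$ is sober; in fact we will only use that $Y$ is $T_0$. The plan is to show first that the Skula topology on a $T_0$ space has a basis of clopen sets. Hausdorffness then follows from the $T_0$ property, and (iii) and (iv) are formal consequences.

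\emph{Step 1 (clopen basis).} Every open set $U$ of $Y$ is Skula-open. So is its complement $Y \setminus U$, which is closed in $Y$. Hence $U$ is clopen in $\Sk(Y)$, and likewise every closed set $C$ of $Y$ is clopen in $\Sk(Y)$. A locally closed set $U \cap C$ is then a finite intersection of clopen sets, hence clopen. By definition the locally closed sets of $Y$ form a basis of $\Sk(Y)$, which gives (ii).

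\emph{Step 2 (Hausdorff).} Let $x \neq y$ in $Y$. Since $Y$ is $T_0$ (every sober space is), there is an open $U \subseteq Y$ containing exactly one of them, say $x \in U$ and $y \notin U$. Then $U$ and $Y \setminus U$ are disjoint Skula-open sets separating $x$ and $y$, which gives (i). This is the only step that uses anything about $Y$ beyond the definition of the Skula topology. The point to be careful about is precisely that we pass through $\Sob(X)$: for a non-$T_0$ space $X$, the space $\Sk(X)$ itself need not be Hausdorff.

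\emph{Step 3 (consequences).} A nonempty space with a basis of clopen sets has small inductive dimension $0$, since clopen sets have empty boundary; the empty case is trivial. This gives (iii). For (iv), let $A$ be a subset containing two distinct points $x \neq y$. By (i) and (ii), choose a basic clopen $B$ with $x \in B$ and $y \notin B$, using the Hausdorff property to pick a neighbourhood of $x$ avoiding $y$ and then a clopen basic set inside it. Then $A = (A \cap B) \sqcup (A \setminus B)$ is a separation of $A$, so $A$ is disconnected. Hence $\Sp(X)$ is totally disconnected. No step poses a real obstacle; the only subtlety is invoking the $T_0$ property of $\Sob(X)$ in Step 2.
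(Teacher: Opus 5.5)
Your proof is correct and follows the paper's route: reduce via Corollary~\ref{Cor:GabrielIsSkulaSob} to $\Sk(\Sob(X))$, use that $\Sob(X)$ is $T_0$ to get Hausdorffness, and use that locally closed sets are Skula-clopen to get a clopen basis, with (iii) and (iv) following formally. The differences are minor: you prove inline the two facts the paper only cites (from Goubault-Larrecq and Hoffmann), and for (iv) you rightly use (i) together with (ii), whereas the paper derives (iv) from (ii) alone, which strictly also needs a separation axiom.
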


\begin{proof}
(i) 
It is known that the Skula topology of a $T_0$-space is Hausdorff \cite[Exercise 9.7.16]{goubault-larrecq_2013}. 
Since the sobrification $\Sob(X)$ is always a $T_0$-space, it follows that $\Sk(\Sob(X))$ is Hausdorff. 
Corollary~\ref{Cor:GabrielIsSkulaSob} therefore implies that $\Sp(X)$ is Hausdorff.

(ii)
The Skula topology admits a basis of clopen subsets \cite[p.~146]{sobrificationHoffmannRemainder}. Hence the claim follows from Corollary~\ref{Cor:GabrielIsSkulaSob}.

(iii) and (iv) 
Both statements are immediate consequences of (ii).
\end{proof}

\begin{theorem}\label{Thm:ProductOfGabrielSpectrums}
Let $X$ and $Y$ be topological spaces. 
Then $\Sp(X \times Y) \approx \Sp(X) \times \Sp(Y)$. 
\end{theorem}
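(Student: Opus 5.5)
The plan is to reduce everything to topology using Corollary~\ref{Cor:GabrielIsSkulaSob} and Remark~\ref{Rmk:SobPresProd}. By the Main Theorem,
\[
  \Sp(X \times Y) \approx \Sk(\Sob(X \times Y)),
  \qquad
  \Sp(X) \times \Sp(Y) \approx \Sk(\Sob(X)) \times \Sk(\Sob(Y)).
\]
By Remark~\ref{Rmk:SobPresProd}, $\Sob(X \times Y) \approx \Sob(X) \times \Sob(Y)$. Note that this product is sober, as a product of sober spaces. Hence it suffices to prove that for arbitrary topological spaces $A$ and $B$ (we only need sober ones) the Skula topology commutes with binary products, i.e.\ the identity map
\[
  \Sk(A \times B) \to \Sk(A) \times \Sk(B)
\]
is a homeomorphism.

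For the identity map $\Sk(A)\times\Sk(B)\to\Sk(A\times B)$ to be continuous, I would check that a basic open set of the product Skula topology is open in $\Sk(A\times B)$. Such a basic set is $L_A \times L_B$, with $L_A = U_A \cap C_A$ locally closed in $A$ and $L_B = U_B\cap C_B$ locally closed in $B$. Then
\[
  L_A \times L_B = (U_A \times U_B) \cap (C_A \times C_B),
\]
which is locally closed in $A \times B$ because a product of closed sets is closed. Hence it is Skula-open.

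For the reverse direction, the Skula topology on $A\times B$ is generated by the open and the closed subsets of $A \times B$. Every open subset of $A\times B$ is a union of products $U_A\times U_B$, and these are open in $\Sk(A)\times\Sk(B)$. For a closed subset $C\subseteq A\times B$, its complement is a union of open rectangles $U_A \times U_B$. Each rectangle is clopen in $\Sk(A)\times\Sk(B)$, since $U_A$ and $U_B$ are clopen in their Skula topologies, so finite products of them are clopen. The complement is therefore a union of open sets, which is not enough to conclude that $C$ is open.

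This closed-set case is the main obstacle. The fix I would try is to show directly that a closed $C$ is a neighborhood of each of its points $(a,b)$. Take the locally closed sets $\overline{\{a\}} \subseteq A$ and $\overline{\{b\}} \subseteq B$; these are closed, hence Skula-open. Their product $\overline{\{a\}} \times \overline{\{b\}} = \overline{\{(a,b)\}}$ is contained in $C$, because $C$ is closed and contains $(a,b)$. It is also open in $\Sk(A)\times\Sk(B)$ as a product of Skula-open sets. So $C$ is open in the product Skula topology, which gives continuity of the reverse map.

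Finally, I will check that the resulting homeomorphism is compatible with the identifications via $\sky$ used in Theorem~\ref{Thm:ForSobSpaceGabrielIsSkula}. This is only bookkeeping. The statement then follows by composing the homeomorphisms.
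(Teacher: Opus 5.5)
Your proposal is correct and follows the paper's proof. Both arguments use Corollary~\ref{Cor:GabrielIsSkulaSob} and Remark~\ref{Rmk:SobPresProd} to reduce the statement to $\Sk(A\times B)=\Sk(A)\times\Sk(B)$; the paper cites this from Hoffmann \cite[Lemma 1.2]{sobrificationHoffmannRemainder}, whereas you prove it directly, and your key step is valid: a closed $C$ contains the product-Skula-open set $\overline{\{a\}}\times\overline{\{b\}}$ around each of its points.

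The only slip is cosmetic. Checking that the basic sets $L_A\times L_B$ are Skula-open in $A\times B$ proves continuity of $\Sk(A\times B)\to\Sk(A)\times\Sk(B)$, not of the map you named, but your ``reverse direction'' then proves the other inclusion, so both are covered.
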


\begin{proof}
The Skula topology respects the product topology: $\Sk(X \times Y) = \Sk(X) \times \Sk(Y)$ (\cite[Lemma 1.2]{sobrificationHoffmannRemainder}). 
Now, by Remark \ref{Rmk:SobPresProd} and Corollary \ref{Cor:GabrielIsSkulaSob}, 
\[
  \Sp(X \times Y) 
  \approx \Sk(\Sob(X \times Y)) 
  = \Sk(\Sob(X)) \times \Sk(\Sob(Y)) 
  \approx \Sp(X) \times  \Sp(Y). 
\]
\end{proof}

\begin{theorem}\label{Thm:GabrielSpecCompact}
Let $X$ be a topological space. 
Then the Gabriel spectrum $\Sp(X)$ is compact if and only if $X$ is a Noetherian space. 
\end{theorem}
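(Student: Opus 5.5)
By Corollary~\ref{Cor:GabrielIsSkulaSob}, $\Sp(X)\approx\Sk(\Sob(X))$, so it suffices to show that $\Sk(\Sob(X))$ is compact if and only if $X$ is Noetherian. The plan is to reduce this to the classical statement that, for a $T_0$-space $Y$, the Skula topology $\Sk(Y)$ is compact if and only if $Y$ is Noetherian. We then transfer Noetherianity between $X$ and $\Sob(X)$. The transfer is immediate: a space is Noetherian iff its frame of open sets satisfies the ascending chain condition, and $U\mapsto\diamond U$ is an order-isomorphism $\Open(X)\to\Open(\Sob(X))$ (cf.\ Remark~\ref{Rmk:SheavesOnSobEqv}).

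\textbf{Noetherian implies compact.} Let $Y=\Sob(X)$, which is sober and Noetherian. Take a cover of $Y$ by Skula-basic sets $U_i\cap C_i$, with $U_i$ open and $C_i$ closed. We use Noetherian induction on closed subsets: suppose some closed $F\subseteq Y$ is not covered by finitely many of these sets, and choose such an $F$ minimal, which is possible since closed sets satisfy the descending chain condition. Write $F$ as a finite union of irreducible closed components. If $F$ is reducible, each component is a proper closed subset, hence finitely covered by minimality, and so $F$ is finitely covered, a contradiction. Therefore $F$ is irreducible, so $F=\overline{\{y\}}$ by sobriety. Pick $i$ with $y\in U_i\cap C_i$. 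Then $F\subseteq C_i$, since $C_i$ is closed and contains $y$. Moreover $F\setminus U_i$ is a closed set not containing $y$, hence a proper closed subset of $F$, and so it is finitely covered. Thus $F\subseteq (U_i\cap C_i)\cup(F\setminus U_i)$ is finitely covered, a contradiction. Applying this with $F=Y$ gives compactness.

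\textbf{Compact implies Noetherian.} Suppose $Y$ is not Noetherian, so there is a strictly increasing chain of open sets $V_1\subsetneq V_2\subsetneq\cdots$. Put $V=\bigcup_n V_n$. The sets $V_{n+1}\setminus V_n$ for $n\ge 0$ (with $V_0=\emptyset$), together with $Y\setminus V$, are locally closed and hence Skula-open. They form a disjoint open cover of $Y$ in which infinitely many members are nonempty, so it has no finite subcover. Hence $\Sk(Y)$ is not compact. Note that this direction needs neither sobriety nor the $T_0$ property.

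\textbf{Main obstacle.} The delicate step is the Noetherian induction in the first direction. It requires sobriety, so that an irreducible closed set has a generic point. This is exactly why we pass to $\Sob(X)$ rather than working with $X$ directly: for a non-sober Noetherian space, $\Sk(X)$ can fail to be compact. One example is an infinite set with the cofinite topology, whose Skula topology is discrete. The second point to check is that $X$ Noetherian implies $\Sob(X)$ Noetherian, which follows from the lattice isomorphism above.
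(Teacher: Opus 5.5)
Your proof is correct and follows the paper's route. Via Corollary~\ref{Cor:GabrielIsSkulaSob} you reduce to compactness of $\Sk(\Sob(X))$, use that $\Sob(X)$ is sober, and transfer Noetherianity between $X$ and $\Sob(X)$. The only difference is that the paper cites Hoffmann's theorem \cite[Theorem~3.1]{sobrificationHoffmannRemainder} ($\Sk(Y)$ is compact iff $Y$ is sober and Noetherian) and asserts the transfer without proof. You instead prove the two needed implications directly, by Noetherian induction using generic points and by the disjoint locally closed cover $\{V_{n+1}\setminus V_n\}\cup\{Y\setminus V\}$. You also justify the transfer through the frame isomorphism $U\mapsto\diamond U$.

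One slip: the ``classical statement'' in your first paragraph, with hypothesis $T_0$, is false. Your own cofinite example is $T_1$ and Noetherian, yet its Skula topology is discrete and not compact. Your argument only ever uses the correct hypothesis, sobriety, so the proof itself is unaffected.
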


\begin{proof}
Hoffmann proved that $\Sk(Y)$ is compact if and only if $Y$ is sober and Noetherian \cite[Theorem~3.1]{sobrificationHoffmannRemainder}. By Corollary~\ref{Cor:GabrielIsSkulaSob}, there is a homeomorphism $\Sp(X)\cong \Sk(\Sob(X))$. Since $\Sob(X)$ is always sober, it follows that $\Sp(X)$ is compact if and only if $\Sob(X)$ is Noetherian. Finally, a topological space is Noetherian if and only if its sobrification is Noetherian. Hence $\Sp(X)$ is compact if and only if $X$ is Noetherian. 
\end{proof}

\begin{theorem}\label{Thm:GabrielForHausdorff}
Let $X$ be a Hausdorff space. 
Then the Gabriel spectrum $\Sp(X)$ is discrete (with cardinality $\abs{X}$). 
\end{theorem}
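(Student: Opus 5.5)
The plan is to reduce the statement to Theorem~\ref{Thm:ForSobSpaceGabrielIsSkula} and then identify the Skula topology of a Hausdorff space. Every Hausdorff space is sober: its only irreducible subsets are singletons, so each irreducible closed set is $\{x\}=\overline{\{x\}}$ with a unique generic point. Theorem~\ref{Thm:ForSobSpaceGabrielIsSkula} therefore gives the homeomorphism $\sky\colon \Sk(X)\to\Sp(X)$. Since $\sky$ is a bijection by Proposition~\ref{Prop:SkyInjBij}~(ii), we also get $\abs{\Sp(X)}=\abs{X}$.

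It remains to show that $\Sk(X)$ is discrete. Hausdorff spaces are $T_1$, so every singleton $\{x\}$ is closed in $X$. Closed sets are Skula-open by definition, so every singleton is open in $\Sk(X)$, and $\Sk(X)$ is discrete. Transporting this along the homeomorphism shows that $\Sp(X)$ is discrete.

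The same conclusion can be checked directly on the categorical side, without the homeomorphism. For $x\in X$, the localizing subcategory $\Supp^{-1}(\{x\})$ has support $\{x\}$ by Proposition~\ref{Prop:OneSideInverseForSuppSets}, since $\{x\}$ is closed and hence Skula-open. Proposition~\ref{Prop:SkyContMap}~(i) then gives $\mathcal{O}(\Supp^{-1}(\{x\}))=\{\sky(x)\}$, using that $\sky$ is a bijection. So each point of $\Sp(X)$ is a basic Gabriel open set.

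There is no genuine obstacle. The only points that need care are that Hausdorff implies sober, which was remarked earlier in the paper, and that Hausdorff implies $T_1$; both are standard. Note that $T_1$ alone gives discreteness of $\Sk(\Sob(X))$ only once $X$ is also sober, which is why the sobriety of Hausdorff spaces is used.
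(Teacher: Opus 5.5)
Your proposal is correct and follows the paper's own proof: you show that Hausdorff implies sober, use Theorem~\ref{Thm:ForSobSpaceGabrielIsSkula} to get $\Sp(X)\approx\Sk(X)$, and observe that closed singletons are Skula-open, so $\Sk(X)$ is discrete. The paper leaves two points implicit that you state explicitly, and both are correct: the cardinality count via Proposition~\ref{Prop:SkyInjBij}~(ii), and the direct check $\mathcal{O}(\Supp^{-1}(\{x\}))=\{\sky(x)\}$.
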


\begin{proof}
Since $X$ is Hausdorff, it is sober. Hence, by Theorem \ref{Thm:ForSobSpaceGabrielIsSkula}, $\Sp(X) \cong \Sk(X)$.
Moreover, every singleton of $X$ is closed, and therefore Skula-open. It follows that every singleton is open in $\Sk(X)$, so $\Sk(X)$ is discrete. Consequently, $\Sp(X)$ is also discrete.
\end{proof}

%%%%%%%%%%%%%%%%%%%%%%%%%%%%%%%%%%%%%%%%%%%%%%%%%%%%%%%
\section{Applications to Topological Data Analysis}\label{Sec:ApplicationsToTDA}

%%%%%%%%%%%%%%%%%%%%%%%%%%%%%%%%%%%%%%%%%%%%%%%%%%%%%%%%%%%%%%
\subsection{Persistence Modules and Scott Sheaves}

Let $P$ be a poset, let $k$ be a field, and let $\Vect$ denote the category of $k$-vector spaces. If $\posetC$ is the thin category associated with $P$, then a (covariant) functor $M\colon \posetC\to\Vect$ is called a \emph{persistence module} over $P$. Thus, for every $p\in P$, there is a $k$-vector space $M_p$, and for every relation $p\le q$, there is a $k$-linear map $M(p\le q)\colon M_p\to M_q$, called an \emph{internal morphism}.
These morphisms satisfy 
\[
  M(p \le p) = \id \quad \text{and} \quad M(q \le r) \circ M(p \le q) = M(p \le r)
\]
for all $p \le q \le r$ in $P$. 
The persistence modules form a category, where the morphisms are just natural transformations, and it is denoted by $\Fun(\posetC, \Vect)$. 
Let $I \subseteq P$ be a convex set. 
The \emph{indicator module} $k[I]$ is the persistence module defined by setting 
\[
  k[I]_p = 
    \begin{cases}
      k, &\text{if } p \in I; \\
      0, &\text{otherwise}. 
    \end{cases}
\]
for all $p\in P$.
The internal morphism $k[I](p \le q)$ is the identity if $p,q \in I$, and zero otherwise. 
When $I = \{ p \}$ for some $p \in P$, we write $k_p$ for the corresponding indicator module. 

For a given poset $P$, the collection of the basic up-sets $U_p = \{ x \in P \mid x \ge p \}$, where $p \in P$, form a basis of the so-called \emph{Alexandrov topology}. 
A subset of $P$ is open with respect to the Alexandrov topology if and only if it is an up-set. 
We write $P^a$ for the poset $P$ endowed with the Alexandrov topology. 
It is well known that the category of persistence modules over $P$ is equivalent to the category of sheaves on $P^a$ \cite[Theorem~4.2.10]{Curry1}. 
In \cite{EphModAndScottShOverContPoset}, it was observed that the so-called 
Scott topology also plays a natural role in topological data analysis. 
Since the Scott topology originates in domain theory, we briefly recall some relevant notions from domain theory. For further background, see \cite{goubault-larrecq_2013}.

Let $P$ be a poset.
For $x,y \in P$, we say that $x$ is \emph{way below} $y$ (and $y$ is \emph{way above} $x$), and write $x \ll y$, if for all directed sets $D \subseteq P$ with $y \le \sup D$ (whenever the supremum exists) there exists $d \in D$ such that $x \le d$. 
An element $x \in P$ is called \emph{compact} (or \emph{isolated from below}) if $x \ll x$. 
We also write ${\twoheaddownarrow} p = \{ x \in P \mid x \ll p \}$ for all $p \in P$. 
A poset is called a \emph{directed-complete partial order}, or a \emph{dcpo} for short, if each of its directed subsets has a supremum. 
A poset $P$ is \emph{continuous}, if for all $p \in P$ the set ${\twoheaddownarrow} p$ is
upward-directed with supremum $p$.

A subset $U \subseteq P$ is called \emph{Scott-open}, if it is an up-set and for every directed subset $D \subseteq P$, the intersection $D \cap U$ is non-empty whenever $\sup D$ exists and is in $U$. The Scott-open subsets form a topology, called the \emph{Scott topology}. We write $P^\sigma$ for the poset $P$ endowed with this topology.
Since every Scott-open subset is an up-set, the identity map $j \colon P^a \to P^{\sigma}$ is continuous.
If the poset $P$ is a continuous dcpo, then $P^{\sigma}$ is sober \cite[Proposition 8.2.12 (b)]{goubault-larrecq_2013}.

A subset $F \subseteq P$ is called an \emph{ideal} if it is downward-closed and upward-directed. For $p \in P$, the basic down-set $D_p = \{ x \in P \mid x \le p \}$ is the simplest example of an ideal. The set of all ideals of $P$ is denoted by $\Idl(P)$ and is ordered by inclusion.

\begin{proposition}\label{Prop:BasisOfSkulaOfContPoset}
Let $P$ be a continuous poset. 
Then the Skula topology of $P^{\sigma}$ has a basis consisting of sets $\Int U_x \cap D_y$, where $x,y \in P$. 
In particular, if $P = \Rset^n$, this basis consists of the rectangles \[ ]x_1, y_1] \times \dots \times \left] x_n, y_n \right]\] where
$x_i, y_i \in \Rset$ for all $i = 1, \dots, n$.
\end{proposition}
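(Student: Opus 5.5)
Here $\Int U_x$ is the Scott interior of the principal up-set $U_x$. In a continuous poset this is ${\twoheaduparrow} x = \{ z \mid x \ll z \}$, and the sets ${\twoheaduparrow} x$ form a basis of the Scott topology. That is the standard domain-theoretic fact I will use (\cite{goubault-larrecq_2013}). Since \verb|\twoheaduparrow| is not defined in the paper, I keep writing $\Int U_x$.

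The plan is to show two things. First, every set $\Int U_x \cap D_y$ is Skula-open. Second, every point of every locally closed subset $W \cap C$ (with $W$ Scott-open and $C$ Scott-closed) has such a set containing it inside $W\cap C$. Since the locally closed sets already form a basis of the Skula topology, this shows the proposed family is a basis.

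For the first step, $\Int U_x$ is Scott-open by definition. The set $D_y$ is Scott-closed: its complement $\{z \mid z\not\le y\}$ is an up-set, and if $\sup E \not\le y$ for a directed $E$, then some $e\in E$ satisfies $e \not\le y$, since otherwise $y$ would be an upper bound of $E$. So $\Int U_x\cap D_y$ is locally closed, hence Skula-open.

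For the second step, let $p\in W\cap C$. Because $W$ is Scott-open and the sets $\Int U_x$ form a basis, there is $x$ with $p\in \Int U_x\subseteq W$; concretely, continuity gives $x\ll p$ with $x\in W$. Take $y=p$. Then $p\in D_p$, and $D_p\subseteq C$ because Scott-closed sets are down-sets. So $p\in \Int U_x\cap D_p\subseteq W\cap C$. The main point needing care is the identification $\Int U_x={\twoheaduparrow}x$ and the fact that these sets are a basis, which uses the interpolation property of continuous posets. I would cite this rather than reprove it.

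For $P=\Rset^n$, with $x \le y$ in the product order, the relation $x\ll z$ means $x_i<z_i$ for all $i$. Indeed, for a single coordinate, $a\ll b$ in $\Rset$ iff $a<b$, and way-below in a finite product of continuous posets is computed coordinatewise. Hence $\Int U_x=\prod_i\,]x_i,\infty[$ and $D_y=\prod_i\,]-\infty,y_i]$, so their intersection is the half-open rectangle $\prod_i\,]x_i,y_i]$, which is empty unless $x_i<y_i$ for all $i$.
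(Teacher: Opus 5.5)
Your proof is correct and follows the paper's argument. The paper also uses that the sets $\Int U_x$ form a basis of the Scott topology and that every Scott-closed set, being a down-set, is the union of the sets $D_y$ it contains; hence each locally closed set is a union of sets $\Int U_x \cap D_y$, and the rectangles for $\Rset^n$ are computed exactly as you do, with your version making explicit why $D_y$ is Scott-closed and why the way-below relation on $\Rset^n$ is strict coordinatewise inequality.
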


\begin{proof} 
Recall that the collection of sets $\Int U_x$ $(x\in P)$ forms a basis for the Scott topology. Moreover, every down-set $D\subseteq P$ can be written as a union of basic down-sets, 
\[ 
  D=\bigcup_{y\in D} D_y, 
\] 
and each $D_y$ is Scott-closed. 
Since the Skula topology is generated by the Scott-open subsets together with the Scott-closed subsets, it follows that $\Sk(P^\sigma)$ admits a basis consisting of the sets $\Int U_x \cap D_y$, where $x,y\in P$. 
For $\bm{x}=(x_1,\dots,x_n)$ and $\bm{y}=(y_1,\dots,y_n)$ in $\Rset^n$, we have 
\[
  \Int U_{\bm{x}} = \left]x_1,\infty\right[\times\cdots\times\left]x_n,\infty\right[ 
  \quad \text{and} \quad  
  D_{\bm{y}} = \left]-\infty,y_1\right]\times\cdots\times\left]-\infty,y_n\right]. 
\] 
Therefore, 
\[ 
  \Int U_{\bm{x}}\cap D_{\bm{y}} = \left]x_1,y_1\right]\times\cdots\times\left]x_n,y_n\right], 
\] 
which proves the second claim. 
\end{proof}

\begin{remark}
Note that the Sorgenfrey topology on $\Rset$ is often defined as the topology generated by the half-open intervals $[x,y[$, where $x,y \in \Rset$. 
This \emph{Sorgenfrey line} is clearly homeomorphic to the space considered in the previous proposition for $n = 1$. 
In this article, we use the former definition for the Sorgenfrey topology instead, since it is the one related to the Scott topology on $\Rset$.
\end{remark}

Let $P$ be a continuous poset. 
The sheaves on $P^{\sigma}$ are called \emph{Scott sheaves}, for short. 
In \cite[Theorem 3.18]{EphModAndScottShOverContPoset} we observe that the category of Scott sheaves is equivalent with the full subcategories of upper and lower 
semi-continuous persistence modules, making 
Scott sheaves interesting for topological data analysis. 
For a persistence module $M$ over $P$, we set
\[
  \underline{M}_p = \varprojlim_{x \gg p} M_x \quad \text{and} \quad \overline{M}_p = \varinjlim_{x \ll p} M_x 
\]
for all $p \in P$. 
A persistence module $M$ is called \emph{upper semi-continuous}, 
if the canonical morphism $M \to \underline{M}$ is an isomorphism. 
Dually, $M$ is \emph{lower semi-continuous}, if the canonical morphism $\overline{M} \to M$ is an isomorphism. 
We denote the full subcategories of upper semi-continuous modules and lower semi-continuous modules by 
$\Fun^c(\posetC, \Mod)$ and $\Fun_c(\posetC, \Mod)$, respectively. 

A persistence module $M$ over a continuous poset is called \emph{ephemeral} if $M(p \le q)=0$ for all $p\ll q$. 
The full subcategory of ephemeral modules is denoted by $\Eph$, and it is known to be a (bi)localizing subcategory of $\Fun(\posetC, \Vect)$. 
Moreover, the main result of \cite[Theorem 4.17]{EphModAndScottShOverContPoset} states that the quotient category $\Fun(\posetC,\Vect) / \Eph$ is equivalent to the category of Scott sheaves $\Sh(P^{\sigma})$. 
For further details on Scott sheaves, ephemeral modules, and semi-continuous modules, see \cite{EphModAndScottShOverContPoset}.

Next, we consider the Gabriel spectra of persistence modules and Scott sheaves. 
The poset $\Rset^n$ provides a particularly useful example for illustrating these spectra. 
Note that a subset of a poset $P$ is an ideal if and only if it is closed and irreducible with respect to the Alexandrov topology. 
Therefore, the underlying set of the sobrification $\Sob(P^a)$ may be identified with the set of ideals $\Idl(P)$.
In the case $P = \Rset$, there are three types of ideals: $\Rset$ itself, the closed ideals $\left]-\infty,x\right]$ and the open ideals $\left]-\infty,x\right[$ where $x \in \Rset$. 
Since the sobrification commutes with products (see Remark \ref{Rmk:SobPresProd}), an ideal of $\Rset^n$, for $n \ge 1$, can be expressed as a product of these ideals. 
In other words, 
\[
  \Idl(\Rset^n) = \Idl(\Rset)^n. 
\]

We begin with the persistence modules over $\Rset^n$, where the spectrum is now naturally described in terms of a topological space of ideals.

\begin{theorem}\label{Thm:GabrielSpecOfPerModOverR}
Let $P = \Rset^n$. Then there is a homeomorphism  
\[
  \Sp(\Fun(\Rset^n, \Vect)) \approx \Idl(\Rset^n)
\]
where $\Idl(\Rset^n)$ is endowed with the topology generated by the rectangles \[\left] F_1, F_1' \right] \times \dots \times \left] F_{n}, F_{n}' \right],\] for $F_i,F_i' \in \Idl(\Rset)$ ($i = 1, \dots, {n}$). 
\end{theorem}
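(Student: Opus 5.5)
We reduce the statement to the Main Theorem and then compute the Skula topology of the sobrification concretely. Since $\Fun(\posetC,\Vect)\simeq\Sh(P^a)$ for $P=\Rset^n$, Corollary~\ref{Cor:GabrielIsSkulaSob} gives $\Sp(\Fun(\Rset^n,\Vect))\approx\Sk(\Sob((\Rset^n)^a))$. As noted just before the statement, the points of $\Sob(P^a)$ are the closed irreducible subsets of $P^a$, namely the ideals of $P$. So the underlying set is $\Idl(\Rset^n)$. It remains to identify the Skula topology on it with the topology generated by the stated rectangles.

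First we handle $n=1$. The open sets of $\Sob(\Rset^a)$ are the sets $\diamond U$ with $U$ an up-set of $\Rset$. We claim that these are exactly the up-sets of $\Idl(\Rset)$ that are open for the Scott topology of the complete chain $\Idl(\Rset)$. Indeed, $\diamond U$ is an up-set under inclusion, and it is inaccessible by directed unions, since an ideal meets $U$ iff some member of a directed family covering it does. The converse direction we do not actually need: it suffices to have the sets $\diamond U$ explicitly. Writing $\downarrow x$ for $]-\infty,x]$ and $\downarrow\!\!x^-$ for $]-\infty,x[$, the chain $\Idl(\Rset)$ is totally ordered as
\[
\dots<\ ]-\infty,x[\ <\ ]-\infty,x]\ <\ \dots<\Rset .
\]
For $U=[a,\infty[$ we get $\diamond U=\{F\mid F>\,]-\infty,a[\,\}$, and for $U=]a,\infty[$ we get $\diamond U=\{F\mid F>\,]-\infty,a]\,\}$. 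Every up-set of $\Rset$ is of one of these forms, or is $\emptyset$ or $\Rset$. Hence the open sets of $\Sob(\Rset^a)$ are precisely the strict up-rays $\{F\mid F>G\}$ for $G\in\Idl(\Rset)$, together with $\emptyset$ and the whole space. Their complements are the down-rays $\{F\mid F\le G'\}$. The Skula topology therefore has as basis the intersections $\{F\mid G<F\le G'\}=\,]G,G']$, that is, the half-open intervals in the chain $\Idl(\Rset)$. We must check that finite intersections of such sets are again of this form, which holds because we are in a chain.

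For general $n$, Remark~\ref{Rmk:SobPresProd} gives $\Sob((\Rset^n)^a)=\Sob(\Rset^a)^n$. This requires $(\Rset^n)^a=(\Rset^a)^n$, which holds for finite products of Alexandrov spaces: the up-sets of $\Rset^n$ are unions of the principal up-sets $U_p=\prod_i U_{p_i}$. Then \cite[Lemma 1.2]{sobrificationHoffmannRemainder} gives $\Sk(\Sob(\Rset^a)^n)=\Sk(\Sob(\Rset^a))^n$. The product topology on $\Idl(\Rset)^n=\Idl(\Rset^n)$ is generated by products of basic half-open intervals $]F_1,F_1']\times\dots\times]F_n,F_n']$, which is exactly the claimed topology. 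One should also check that the identification $\Idl(\Rset^n)=\Idl(\Rset)^n$ used here agrees with the bijection of points coming from the sobrification. This holds because $\Sob$ of a product sends a tuple of irreducible closed sets to their product.

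The main obstacle is the $n=1$ computation of the lower Vietoris topology on $\Idl(\Rset)$. In particular, we must confirm that the up-sets $[a,\infty[$ and $]a,\infty[$ of $\Rset$ produce exactly the strict up-rays beyond the two ideals $]-\infty,a[$ and $]-\infty,a]$. This is what makes the endpoints of the basic sets range over all of $\Idl(\Rset)$ rather than over $\Rset$.
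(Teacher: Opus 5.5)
Your proposal is correct and follows essentially the same route as the paper. You reduce via $\Fun(\posetC,\Vect)\simeq\Sh((\Rset^n)^a)$ and Corollary~\ref{Cor:GabrielIsSkulaSob}, compute the lower Vietoris opens $\diamond[a,\infty[$ and $\diamond]a,\infty[$ on $\Idl(\Rset)$ to obtain the half-open intervals $]G,G']$, and pass to general $n$ through $(\Rset^n)^a=(\Rset^a)^n$ together with the compatibility of $\Sob$ and $\Sk$ with products, which the paper packages as Theorem~\ref{Thm:ProductOfGabrielSpectrums}. The only cosmetic slip is that the down-rays $\{F\mid F\le G'\}$ are not literally of the form $]G,G']$, but they are unions of such intervals because $\Idl(\Rset)$ has no least element, so the intervals still form a basis.
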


\begin{proof}
We identify persistence modules over $\Rset^n$ with sheaves on $(\Rset^n)^a$. 

First consider the case $n = 1$.
The proper non-empty open subsets of $\Rset^a$ are precisely $\left[x, \infty \right[$ and $\left] x, \infty \right[$, for $x \in \Rset$. 
The corresponding open subsets in the sobrification $\Sob(\Rset^a)$ are 
\[
  \diamond \left[x, \infty \right[ 
  = \{ F \in \Idl(\Rset) \mid \left]-\infty, x \right] \subseteq F \} 
  = \{ F \in \Idl(\Rset) \mid \left]-\infty, x \right[ \subsetneq F \} 
\]
and 
\[
  \diamond \left]x, \infty \right[ 
  = \{ F \in \Idl(\Rset) \mid \left]-\infty, x \right] \subsetneq F \}. 
\]
The closed subsets are their complements, namely 
\[
  \{ F \in \Idl(\Rset) \mid F \subseteq \left] -\infty, x \right[ \}
  \quad \text{and} \quad 
  \{ F \in \Idl(\Rset) \mid F \subseteq \left] -\infty, x \right] \}. 
\]
The Skula topology of $\Sob(\Rset^a)$ is therefore generated by the sets 
\[
  \left] F_1, F_2 \right] = \{ F \in \Idl(\Rset) \mid F_1 \subsetneq F \subseteq F_2 \}, 
\]
where $F_1, F_2 \in \Idl(\Rset)$. 
The claim now follows from Corollary \ref{Cor:GabrielIsSkulaSob}. 

We now turn to the general case.
Recall that the Alexandrov topology preserves products. 
In particular, $(\Rset^n)^a = (\Rset^a)^n$ for all $n \ge 1$. 
Therefore, the case $n=1$, together with Theorem \ref{Thm:ProductOfGabrielSpectrums}, yields 
\[
  \Sp(\Fun(\Rset^n, \Vect)) \approx \Sp(\Fun(\Rset, \Vect))^n \approx \Idl(\Rset)^n = \Idl(\Rset^n), 
\]
where $\Idl(\Rset^n)$ is endowed with corresponding product topology. 
This topology has a basis that consists of the rectangles $\left] F_1, F_1' \right] \times \dots \times \left] F_{n}, F_{n}' \right]$, where $F_i,F_i' \in \Idl(\Rset)$ for $i = 1, \dots, n$. 
\end{proof}

For Scott sheaves, the situation is even more concrete. The points of the spectrum are parametrized by $(\Rset\cup\{\infty\})^n$, and the Gabriel topology becomes the product Sorgenfrey topology.

\begin{theorem}\label{Thm:GabrielSpecOfScottShOverR}
Let $P = \Rset^{n}$. 
Then there is a homeomorphism  
\[
  \Sp((\Rset^n)^{\sigma}) \approx (\Rset \cup \{ \infty \})^n, 
\]
where $(\Rset \cup \{ \infty \})^n$ is endowed with the product Sorgenfrey topology, that is, the topology generated by the rectangles \[ ]x_1,y_1]\times\cdots\times ]x_n,y_n], \] with $x_i\in\Rset$ and $y_i\in\Rset\cup\{\infty\}$.
\end{theorem}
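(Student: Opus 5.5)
By Corollary~\ref{Cor:GabrielIsSkulaSob}, it suffices to compute $\Sk(\Sob((\Rset^n)^\sigma))$. First reduce to $n=1$, and then compute the sobrification of $\Rset^\sigma$ explicitly.

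\emph{Reduction to $n=1$.} The Scott topology on $\Rset^n$ (coordinatewise order) coincides with the product of the Scott topologies on the factors. This holds because $\Rset^n$ is a continuous poset. The sets $\Int U_{\bm x} = \left]x_1,\infty\right[\times\cdots\times\left]x_n,\infty\right[$ form a basis of $(\Rset^n)^\sigma$, as recalled in the proof of Proposition~\ref{Prop:BasisOfSkulaOfContPoset}. These sets are exactly the basic open sets of the product $(\Rset^\sigma)^n$, apart from whole factors $\Rset$, which are unions of such intervals. Hence $(\Rset^n)^\sigma = (\Rset^\sigma)^n$. Theorem~\ref{Thm:ProductOfGabrielSpectrums} then gives $\Sp((\Rset^n)^\sigma)\approx \Sp(\Rset^\sigma)^n$. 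It therefore remains to show $\Sp(\Rset^\sigma)\approx \Rset\cup\{\infty\}$ with the topology generated by the intervals $\left]x,y\right]$, where $x\in\Rset$ and $y\in\Rset\cup\{\infty\}$. The product of these is the claimed rectangle topology.

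\emph{Sobrification of $\Rset^\sigma$.} The Scott-open subsets of $\Rset$ are $\emptyset$, $\Rset$, and $\left]x,\infty\right[$ for $x\in\Rset$. Indeed, $[x,\infty[$ is not Scott-open, since a directed set increasing to $x$ misses it. So the closed sets are $\emptyset$, $\Rset$, and $\left]-\infty,x\right]$. Every non-empty closed set is irreducible, because the open sets form a chain. Thus $\Sob(\Rset^\sigma)$ is in bijection with $\Rset\cup\{\infty\}$ via $\left]-\infty,x\right]\mapsto x$ and $\Rset\mapsto\infty$. Under this bijection the lower Vietoris open set $\diamond\left]x,\infty\right[$ corresponds to $\left]x,\infty\right]$. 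So $\Sob(\Rset^\sigma)$ is $\Rset\cup\{\infty\}$ with open sets $\emptyset$, the whole space, and $\left]x,\infty\right]$. Its closed sets are $\left]-\infty,y\right]$ for $y\in\Rset$, together with $\emptyset$ and the whole space.

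\emph{Skula topology.} The Skula topology has a basis of locally closed sets, namely the intersections of an open and a closed set. These are $\left]x,y\right]$ with $x\in\Rset$ and $y\in\Rset\cup\{\infty\}$, together with the sets $\left]-\infty,y\right]$ and the whole space. The last two kinds are unions of intervals $\left]x,y\right]$ with $x\to-\infty$, so the intervals $\left]x,y\right]$ alone generate the topology. This gives the case $n=1$, and the theorem follows.

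The main point needing care is the identification $(\Rset^n)^\sigma=(\Rset^\sigma)^n$. If one prefers to avoid it, an alternative is to apply Remark~\ref{Rmk:SobPresProd} after proving it. A second option is to compute $\Sk(\Sob((\Rset^n)^\sigma))$ directly from Proposition~\ref{Prop:BasisOfSkulaOfContPoset}, extended to the added points at infinity.
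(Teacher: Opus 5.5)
Your proof is correct and follows the paper's route: reduce to $n=1$ via $(\Rset^n)^\sigma=(\Rset^\sigma)^n$ and Theorem~\ref{Thm:ProductOfGabrielSpectrums}, identify $\Sob(\Rset^\sigma)$ with $\Rset\cup\{\infty\}$ carrying the open sets $\left]x,\infty\right]$, and read off its Skula topology as the one generated by the intervals $\left]x,y\right]$. The only difference is that you verify by hand what the paper cites — the product identification (from Goubault-Larrecq), the sobrification, and the Skula basis (via Proposition~\ref{Prop:BasisOfSkulaOfContPoset}) — so your version is more self-contained.
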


\begin{proof}
Let us first consider the case $n = 1$. 
The sobrification of $\Rset^{\sigma}$ is $(\Rset \cup \{ \infty \})^{\sigma}$ as the smallest sober space containing $\Rset^{\sigma}$. 
By Proposition \ref{Prop:BasisOfSkulaOfContPoset}, its Skula topology is generated by the intervals $\left] x, y \right]$, for $x < y$.
Corollary~\ref{Cor:GabrielIsSkulaSob} therefore yields $\Sp(\Rset^\sigma) \approx \Rset\cup\{\infty\}$ endowed with the Sorgenfrey topology.

For the general case, recall that the Scott topology preserves products of continuous posets \cite[Proposition 5.1.54]{goubault-larrecq_2013}. 
Hence, $(\Rset^n)^{\sigma} = (\Rset^{\sigma})^n$ for all $n \ge 1$. 
Thus, by Theorem \ref{Thm:ProductOfGabrielSpectrums}, 
\[
  \Sp((\Rset^n)^{\sigma}) \approx \Sp(\Rset^{\sigma})^n \approx (\Rset \cup \{ \infty \})^n, 
\]
where the right-hand side is endowed with the product Sorgenfrey topology. Equivalently, its topology is generated by the boxes \[ ]x_1,y_1]\times\cdots\times ]x_n,y_n]. \]  
\end{proof}

\begin{remark}\label{Rmk:Embedding}
Let $\mathcal{G}$ be a Grothendieck category. 
If $\mathcal{L}$ is a localizing subcategory of $\mathcal{G}$, then both $\mathcal{L}$ and the quotient category $\mathcal{G} / \mathcal{L}$ are Grothendieck categories. 
An indecomposable injective object of $\mathcal L$ is uniform when regarded as an object of $\mathcal G$, and therefore its injective envelope in $\mathcal G$ is an indecomposable injective object of $\mathcal G$. This defines an injective map
$\InjSpec(\mathcal{L}) \to \InjSpec(\mathcal{G})$. 
The image of this map is precisely the Gabriel open set $\mathcal{O}(\mathcal{L})$. 
Consequently, $\Sp(\mathcal{L})$ is homeomorphic to an open subspace of $\Sp(\mathcal{G})$. 
Similarly, the section functor $\sec \colon \mathcal{G} / \mathcal{L} \to \mathcal{G}$ preserves indecomposable injective objects, and therefore induces an injective map $\InjSpec(\mathcal{G} / \mathcal{L}) \to \InjSpec(\mathcal{G})$. 
The image of this map is the complementary Gabriel closed subset $\Sp(\mathcal{G}) \setminus \mathcal{O}(\mathcal{L})$. 
For more details, see, for example, \cite[Corollary 2.2.18]{KrauseHomTheoryRep}. 

In the situation $\mathcal{G} = \Fun(\posetC, \Vect)$ and $\mathcal{L} = \Eph$, where $P$ is a continuous poset, it then follows that $\Sp(\Eph)$ is homeomorphic to an open subspace of $\Sp(\Fun(\posetC, \Vect))$, whereas $\Sp(P^{\sigma})$ is homeomorphic to the complementary closed subspace.
\end{remark}

When $P=\Rset$, the Gabriel spectrum of ephemeral modules has a particularly simple structure.

\begin{proposition}\label{Prop:GabrielSpecOfDenseLinPoset}
Let $P = \Rset$. Then $\Sp(\Eph)$ is homeomorphic to $\Rset$ endowed with discrete topology. 
\end{proposition}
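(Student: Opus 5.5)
We use Remark~\ref{Rmk:Embedding} with $\mathcal{G}=\Fun(\posetC,\Vect)\simeq\Sh(\Rset^a)$ and $\mathcal{L}=\Eph$: the spectrum $\Sp(\Eph)$ is homeomorphic to the Gabriel open subset $\mathcal{O}(\Eph)\subseteq\Sp(\Sh(\Rset^a))$, with the subspace topology. So it suffices to identify $\mathcal{O}(\Eph)$ inside $\Sp(\Rset^a)\approx\Sk(\Sob(\Rset^a))$, where the homeomorphism is Corollary~\ref{Cor:GabrielIsSkulaSob}. Concretely, the points of $\Sp(\Rset^a)$ are the sheaves $\sky(F)=k[F]$ for ideals $F\in\Idl(\Rset)$. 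These ideals are $\Rset$, $\left]-\infty,x\right]$ and $\left]-\infty,x\right[$.

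\textbf{Identifying $\mathcal{O}(\Eph)$.} The plan is to show that $\mathcal{O}(\Eph)=\{k[\left]-\infty,x\right]] \mid x\in\Rset\}$.

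For $F=\left]-\infty,x\right]$, the simple module $k_x$ is ephemeral. The map $k_x\to k[F]$ into the top of the down-set $F$ is non-zero, so $\Hom(k_x,k[F])\neq 0$.

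Conversely, suppose $F=\left]-\infty,x\right[$ or $F=\Rset$, and let $\varphi\colon M\to k[F]$ with $M$ ephemeral. For any $p\in F$ there is $q\in F$ with $q>p$, hence $p\ll q$ in $\Rset$. Naturality gives
\[
\varphi_p = k[F](p\le q)^{-1}\circ\varphi_q\circ M(p\le q),
\]
where $k[F](p\le q)$ is the identity on $k$. Since $M(p\le q)=0$, we get $\varphi_p=0$. Thus $\varphi=0$.

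An alternative route is via $\Supp$. Under the sobrification, $\mathcal{O}(\Eph)$ corresponds to the Skula-open set $\Supp$ of $\Eph$ computed on $\Sob(\Rset^a)$. One could check that this set consists exactly of the ideals with a maximum, but the direct Hom computation above is cleaner.

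\textbf{Identifying the topology.} Map $x\mapsto k[\left]-\infty,x\right]]$; this is a bijection $\Rset\to\mathcal{O}(\Eph)$. It remains to show the subspace topology is discrete. By the description in the proof of Theorem~\ref{Thm:GabrielSpecOfPerModOverR}, the set
\[
\left]\,\left]-\infty,x\right[,\ \left]-\infty,x\right]\,\right] = \{F \mid \left]-\infty,x\right[\subsetneq F\subseteq\left]-\infty,x\right]\}
\]
is Skula-open. It equals the singleton $\{\left]-\infty,x\right]\}$, so every point of $\mathcal{O}(\Eph)$ is open and the subspace is discrete.

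The main obstacle is the bookkeeping in Remark~\ref{Rmk:Embedding}. One must confirm that the map $\InjSpec(\Eph)\to\InjSpec(\mathcal{G})$, $E\mapsto E_{\mathcal G}(E)$, is a homeomorphism onto $\mathcal{O}(\Eph)$ with the subspace topology, and not merely a bijection. Here this is harmless: the target is discrete, so it suffices that the bijection is continuous from $\Sp(\Eph)$. I would actually verify that $\Sp(\Eph)$ is discrete directly from its own localizing subcategories: for each $x$, take the localizing subcategory of $\Eph$ generated by $k_x$. This separates the point $x$, since $\Hom(k_x,E(k_y))=0$ for $y\neq x$ by the same naturality argument.
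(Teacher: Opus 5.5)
Your proposal is correct and follows the paper's proof. You use Remark~\ref{Rmk:Embedding} to identify $\Sp(\Eph)$ with $\mathcal{O}(\Eph)$, compute $\mathcal{O}(\Eph)=\{k\left]-\infty,x\right]\mid x\in\Rset\}$ by the same naturality argument, and show each such point is open. The only difference is cosmetic: the paper exhibits the open point directly as $\mathcal{O}(\Supp^{-1}(\{x\}))$, whereas you read it off from the Skula-open singleton in $\Sob(\Rset^a)$ via Corollary~\ref{Cor:GabrielIsSkulaSob}, which amounts to the same thing.
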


\begin{proof} 
By Remark \ref{Rmk:Embedding}, $\Sp(\Eph)$ is homeomorphic to the open set $\mathcal{O}(\mathcal{\Eph})$ of $\Sp(\Fun(\Rset, \Vect))$. The injective spectrum of
$\Fun(\Rset, \Vect)$ consists of the interval modules $k\left] -\infty, p \right]$ and $k\left] -\infty, p \right[$, where $p\in \Rset$, together with the constant module $k[\Rset]$.
Recall that a persistence module $M\in \Fun(\Rset, \Vect)$ is ephemeral if and only if all its internal morphisms are zero. A direct calculation shows that \[\Hom(M, k\left] -\infty, p \right])=\Hom_k(M_p,k)\quad\text{and}\quad \Hom(M, k\left] -\infty, p \right[])=0.\] Moreover,  
$\Hom(M, k[\Rset]) = 0$. Therefore 
\[\mathcal{O}(\Eph) = \{ k\left] -\infty, p \right] \mid p \in \Rset \}\] 
Furthermore,
\[\mathcal{O}(\Supp^{-1}(\{p\})) = \{ k\left] -\infty, p \right] \}\] for all $p \in \Rset$.
Hence every point $k\left] -\infty, p \right]$ is open in $\Sp(\Fun(\Rset, \Vect))$.
Thus $\mathcal O(\Eph)$ is a discrete subspace. Since its points are naturally parametrized by $\Rset$, it follows that $\Sp(\Eph)$ is homeomorphic to $\Rset$ endowed with the discrete topology.
\end{proof}

We also obtain the following general results concerning Gabriel spectra of Scott sheaves.

\begin{theorem}
Let $P$ be a linearly ordered poset. 
Then the Gabriel spectrum $\Sp(P^{\sigma})$ is compact if and only if $P$ is well-ordered. 
\end{theorem}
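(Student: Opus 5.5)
By Theorem~\ref{Thm:GabrielSpecCompact}, the Gabriel spectrum $\Sp(P^{\sigma})$ is compact if and only if the space $P^{\sigma}$ is Noetherian. So the statement reduces to a purely order-theoretic claim: for a linearly ordered poset $P$, the Scott space $P^{\sigma}$ is Noetherian if and only if $P$ is well-ordered. I would use the standard characterization that a space is Noetherian if and only if every open subset is compact, or equivalently, every ascending chain of open subsets stabilizes.

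For the direction ``Noetherian $\Rightarrow$ well-ordered'', suppose $P$ is not well-ordered. Then there is a strictly decreasing sequence $p_1 > p_2 > \cdots$ in $P$. I would consider the up-sets $U_{p_i}=\{x \mid x\ge p_i\}$. These form a strictly ascending chain of up-sets, and the aim is to show that each one is Scott-open. Let $D$ be a directed subset whose supremum $\sup D$ lies in $U_{p_i}$. If some element of $D$ is at least $p_i$, we are done. Otherwise $D\subseteq\ ]-\infty,p_i[$, so $\sup D\ge p_i$ forces $\sup D=p_i$. In that case the subset fails to be Scott-open exactly when $p_i$ is a supremum of elements strictly below it.

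To avoid this problem, I would use the open sets $V_i=\ ]p_{i+1},\infty[$ instead. Each $V_i$ is an up-set. It is Scott-open, because a directed set (in a linear order, a chain) whose supremum exceeds $p_{i+1}$ must contain an element above $p_{i+1}$; otherwise $p_{i+1}$ would be an upper bound, contradicting $\sup D > p_{i+1}$. The chain $V_1\subsetneq V_2\subsetneq\cdots$ is strictly increasing, since $p_{i+1}\in V_{i+1}\setminus V_i$. Hence $P^{\sigma}$ is not Noetherian.

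For the direction ``well-ordered $\Rightarrow$ Noetherian'', note that every nonempty up-set of a linear order is a final segment. If $P$ is well-ordered, every nonempty open set is therefore of the form $U_m$, where $m$ is its least element. A strictly ascending chain of open sets would then give a strictly decreasing sequence of least elements, which is impossible in a well-order. The empty set is harmless. Hence every ascending chain of opens stabilizes, and $P^{\sigma}$ is Noetherian.

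The main obstacle is the first direction, namely making sure the chosen sets are genuinely Scott-open and not merely up-sets. Using the strict final segments $]p,\infty[$ sidesteps the issue of suprema approached from below. A minor additional point is the convention for an empty $P$ or for directed sets with no supremum; Scott-openness only constrains those directed sets whose supremum exists, so neither case causes trouble.
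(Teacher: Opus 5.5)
Your proposal is correct and follows the paper's route: reduce via Theorem~\ref{Thm:GabrielSpecCompact} to the claim that $P^{\sigma}$ is a Noetherian space if and only if the linear order $P$ is well-ordered. The paper only recalls this order-theoretic fact, whereas you prove it, and your use of the strict final segments $]p_{i+1},\infty[$ instead of $U_{p_i}$ correctly ensures Scott-openness in the non-well-ordered direction.
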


\begin{proof}
Recall that a linearly ordered poset $P$ is well-ordered, or equivalently Artinian, if and only if $P^\sigma$ is a Noetherian space. The claim therefore follows from Theorem~\ref{Thm:GabrielSpecCompact}.
\end{proof}

\begin{theorem}
Let $P$ be a continuous poset. 
Then 
\begin{enumerate}[(i)]
    \item a singleton $\{ p \}$ $(p \in P)$ is Skula-open in the Scott topology if and only if $p$ is compact; 
    \item $P$ is Noetherian if and only if $P$ is a dcpo and $p$ is compact for all $p \in P$; 
    \item $P$ is Noetherian if and only if $\Sob(P^{\sigma})$ is Noetherian as a poset; 
    \item the Gabriel spectrum $\Sp(P^{\sigma})$ is discrete if and only if $P$ is a Noetherian poset. 
\end{enumerate}
\end{theorem}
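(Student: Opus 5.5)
We read "Noetherian poset" as satisfying the ascending chain condition. Equivalently, every non-empty directed subset has a greatest element. The plan is to prove (i) and (ii) by direct order-theoretic arguments in a continuous poset, and (iii) via the order-embedding $\eta$. Then (iv) follows by applying (i)–(iii) to the sober space $\Sob(P^\sigma)$ and invoking Corollary~\ref{Cor:GabrielIsSkulaSob}. Throughout we use the standard fact for continuous posets that $\Int U_x = \{ y \mid x \ll y \}$, and that these sets form a basis of the Scott topology (as in the proof of Proposition~\ref{Prop:BasisOfSkulaOfContPoset}).

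For (i), suppose first that $p$ is compact. Then $U_p = \Int U_p$ is Scott-open, $D_p$ is Scott-closed, and $U_p \cap D_p = \{p\}$ is locally closed, hence Skula-open. Conversely, suppose $\{p\}$ is Skula-open. Since the Skula topology has a basis of locally closed sets, $\{p\} = U \cap C$ with $U$ Scott-open and $C$ Scott-closed. Because $C$ is a down-set containing $p$, we get $U \cap D_p = \{p\}$. By continuity, ${\twoheaddownarrow} p$ is directed with supremum $p \in U$, so some $x \ll p$ lies in $U$. Then $x \in U \cap D_p$ forces $x = p$, so $p \ll p$.

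For (ii), assume the ascending chain condition. Then every directed set $D$ has a maximum, which is its supremum, so $P$ is a dcpo. Moreover, if $p \le \sup D = \max D$, then $p$ lies below an element of $D$, so $p$ is compact. Conversely, assume $P$ is a dcpo and every element is compact. An ascending chain has a supremum $s$, and $s \ll s$ yields a chain element $c \ge s$, so the chain stabilizes at $c$.

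For (iii), the map $\eta \colon P \to \Sob(P^\sigma)$ is an order embedding for the specialization orders, because the specialization order of $P^\sigma$ is the original order. Hence the ascending chain condition on $\Sob(P^\sigma)$ passes to $P$. For the other direction, if $P$ is Noetherian, then by (ii) it is a continuous dcpo, indeed an algebraic one. So $P^\sigma$ is already sober by \cite[Proposition 8.2.12 (b)]{goubault-larrecq_2013}, and $\Sob(P^\sigma) = P$.

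For (iv), we use Corollary~\ref{Cor:GabrielIsSkulaSob}: $\Sp(P^\sigma) \approx \Sk(Q)$ with $Q = \Sob(P^\sigma)$, and a space is discrete iff all its singletons are open. The key input is that, for a continuous poset $P$, the sobrification $Q$ is (homeomorphic to) a continuous dcpo with its Scott topology. Concretely, $Q$ is the rounded-ideal completion of $P$, ordered by inclusion; we would cite \cite{goubault-larrecq_2013} for this. Granting it, (i) applied to $Q$ shows that $\Sk(Q)$ is discrete iff every point of $Q$ is compact. Since $Q$ is a dcpo, (ii) turns this into "$Q$ is Noetherian", and (iii) into "$P$ is Noetherian". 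The main obstacle is this identification of $\Sob(P^\sigma)$ with the Scott space of a continuous dcpo. If a clean reference is unavailable, we would instead argue (iv) directly. The idea is that a point of $\Sob(P^\sigma)$ is an irreducible closed set $F$, and $\{F\}$ is Skula-open iff some Scott-open $U$ satisfies $U \cap \overline{F} = \{F\}$ in $Q$. One would then show that this forces $F = D_p$ for some compact $p$, using that $F$ is the closure of a directed set.
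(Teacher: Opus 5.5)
Your proposal is correct and follows essentially the same route as the paper. Like the paper, you prove (iii) via the embedding $\eta$ and the sobriety of $P^\sigma$ for continuous dcpos, and you prove (iv) by identifying $\Sob(P^\sigma)$ with the Scott space of the rounded ideal completion and chaining Corollary~\ref{Cor:GabrielIsSkulaSob} with (i)--(iii). The differences are minor: you prove (i) and (ii) directly, where the paper compresses (i) into a one-line chain and cites Li for (ii); your continuity argument extracting $p \ll p$ from $\{p\} = U \cap D_p$, and your note that the completion must be continuous for (i) to apply, make explicit steps the paper leaves implicit.
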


\begin{proof}
(i) 
Simply, 
\[
  \{ p \} \text{ is Skula-open} 
  \iff U_p \text{ is Scott-open} 
  \iff p \ll p 
  \iff p \text{ is compact}.
\]

(ii)
See \cite[Remark 2.1]{li2026conoetherianspaces}. 

(iii)
Note that $P$ is a subposet of $\Sob(P^\sigma)$. Hence, if $\Sob(P^\sigma)$ is Noetherian, then so is $P$. Conversely, if $P$ is Noetherian, then $P$ is  dcpo by (ii). We then know by \cite[Proposition 8.2.12]{goubault-larrecq_2013} that $P^\sigma$ is sober, and therefore $\Sob(P^\sigma)=P$. Hence $\Sob(P^\sigma)$ is a Noetherian poset.

(iv) By \cite[Exercise 8.2.48]{goubault-larrecq_2013} the topological space $\Sob(P^{\sigma})$ is the Scott space of the underlying poset, which coincides with the so-called rounded ideal completion.  Moreover, we know by \cite{goubault-larrecq_2013} that this poset is always a dcpo. The claim now follows from Corollary \ref{Cor:GabrielIsSkulaSob}, (i), (ii) and (iii): 
\begin{align*}
  \Sp(P^{\sigma}) \text{ is discrete} 
  &\iff \Sk(\Sob(P^{\sigma})) \text{ is discrete} \\
  &\iff \text{every point of } \Sob(P^{\sigma}) \text{ is Skula-open} \\
  &\iff \text{every point of } \Sob(P^{\sigma}) \text{ is compact} \\
  &\iff \Sob(P^{\sigma}) \text{ is a Noetherian poset} \\
  &\iff P \text{ is a Noetherian poset}. 
\end{align*}
\end{proof}

%%%%%%%%%%%%%%%%%%%%%%%%%%%%%%%%%%%%%%%%%%%%%%%%%%%%
\subsection{Localizing Subcategories of Scott Sheaves on Reals}

It is well known that classifying localizing subcategories is generally a difficult problem. Nevertheless, we obtain a complete classification in one non-trivial case, namely when $ P=\Rset\cup\{\infty\}$ is endowed with the Scott topology.

\begin{proposition}
Let $X$ be a topological space, and let $\mathcal{L}$ be a localizing subcategory of $\Sh(X)$. 
Then $x \in \Supp(\mathcal{L})$ if and only if $k_Z \in \mathcal{L}$ for some locally closed set $Z$ containing $x$. 
\end{proposition}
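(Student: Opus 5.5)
The proof will treat the two implications separately. The reverse direction is immediate. If $k_Z \in \mathcal{L}$ for some locally closed $Z$ with $x \in Z$, then $(k_Z)_x = k \neq 0$. Hence $x \in \Supp(k_Z) \subseteq \Supp(\mathcal{L})$ by the definition of $\Supp(\mathcal{L})$.

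For the forward direction, suppose $x \in \Supp(\mathcal{L})$. Then there is some $\mathcal{F} \in \mathcal{L}$ with $\mathcal{F}_x \neq 0$, so $x \in \Supp(\mathcal{F})$. Apply Lemma \ref{Lemma:SectionInduceMonomorf} (ii) to $\mathcal{F}$ and $x$. It gives a locally closed set $Z \subseteq X$ containing $x$ together with a monomorphism $k_Z \to \mathcal{F}$.

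Concretely, $Z = \supp(s)$ for a section $s \in \mathcal{F}(U)$ with $s_x \neq 0$. The monomorphism identifies $k_Z$ with the subsheaf generated by $s$, as in Lemma \ref{Lemma:SectionInduceMonomorf} (i). Since $\mathcal{L}$ is localizing, it is in particular a Serre subcategory, so it is closed under subobjects. Therefore $k_Z \in \mathcal{L}$, and $x \in Z$ because $s_x \neq 0$.

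The only step needing any care is confirming that the locally closed set produced by Lemma \ref{Lemma:SectionInduceMonomorf} (ii) genuinely contains $x$. In that lemma's proof $Z$ is chosen as $\supp(s)$ with $s_x \neq 0$, so $x \in Z$ holds by construction. Beyond this there is no real obstacle: the argument uses only the subobject-closure of Serre subcategories and the earlier lemma.
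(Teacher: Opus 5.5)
Your proposal is correct and follows essentially the same route as the paper's proof. For the forward direction you use Lemma~\ref{Lemma:SectionInduceMonomorf}~(ii) to embed $k_Z$ into some $\mathcal{F} \in \mathcal{L}$ and then use closure of $\mathcal{L}$ under subobjects, and for the converse you use $x \in \Supp(k_Z) \subseteq \Supp(\mathcal{L})$.
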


\begin{proof}
If $x \in \Supp(\mathcal{L})$, then $x \in \Supp(\mathcal{F})$ for some $\mathcal{F} \in \mathcal{L}$. 
By Lemma \ref{Lemma:SectionInduceMonomorf} (ii), there exists a locally closed subset $Z \subseteq X$ and a monomorphism $k_Z \to \mathcal{F}$. 
Since $\mathcal{L}$ is closed under subobjects, it follows that $k_Z \in \mathcal{L}$.

Conversely, if $k_Z \in \mathcal{L}$ for some locally closed subset
$Z$ containing $x$, then $x \in \Supp(k_Z) \subseteq \Supp(\mathcal{L})$. 
\end{proof}

\begin{lemma}\label{Lemma:PointInSupportImpliesInterval2}
Let $P$ be a continuous poset and let $\mathcal{L}$ be a localizing subcategory of $\Sh(P^{\sigma})$. 
Then $p \in \Supp(\mathcal{L})$ if and only if $j_*k[x,p] \in \mathcal{L}$ for some $x \ll p$. 
\end{lemma}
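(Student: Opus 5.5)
The plan is to reduce both directions to the preceding proposition, which says $p \in \Supp(\mathcal{L})$ if and only if $k_Z \in \mathcal{L}$ for some locally closed $Z \subseteq P^\sigma$ containing $p$. The bridge is the following claim: for $x \ll p$ the Scott sheaf $j_*k[x,p]$ is (isomorphic to) the sheaf $k_{Z_{x,p}}$, where
\[
  Z_{x,p} = \Int U_x \cap D_p .
\]
This set is locally closed in $P^\sigma$, since $\Int U_x$ is Scott-open and $D_p$ is Scott-closed. It contains $p$, because $x \ll p$ means exactly that $p \in \Int U_x$ for a continuous poset.

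I will verify the claim first, and I expect it to be the main obstacle. Here $j \colon P^a \to P^\sigma$ is the identity, so $(j_*k[x,p])(V) = \varprojlim_{z \in V} k[x,p]_z$ for Scott-open $V$.

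\emph{Restricting to $\Int U_x$.} For $V \subseteq \Int U_x$, the set $[x,p] \cap V = D_p \cap V$ is a down-set of $V$. A compatible family on it is constant along comparabilities, and it must vanish on all points of $V$ that are not below $p$. Hence the limit is $k$ when $p \in V$ and $0$ otherwise. This agrees with $k_{Z_{x,p}}(V)$ on a basis of $\Int U_x$, namely the sets $\Int U_y$ with $y \ll q$.

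\emph{Away from $\Int U_x$.} I also need $j_*k[x,p]$ to have no stalk outside $\Int U_x$. Let $z \in [x,p] \setminus \Int U_x$ and let $V$ be a Scott neighbourhood of $z$. By continuity I can shrink $V$ to some $\Int U_y$ with $y \ll z$ and $y \not\geq x$. Then $V$ contains elements of $[x,p]$ together with elements outside $U_x$, and the directedness of ${\twoheaddownarrow} z$ should force any compatible section to vanish near $z$.

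If a full isomorphism proves awkward, I will settle for the weaker statement the proof actually uses. This is that $j_*k[x,p]$ has stalk $k$ at $p$, and that $j_*k[x,p]$ lies in the Serre closure of $k_{Z_{x,p}}$, for instance as a subobject of an extension by zero.

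($\Leftarrow$) Suppose $j_*k[x,p] \in \mathcal{L}$ with $x \ll p$. The computation above gives a stalk of $k$ at $p$: take the colimit over $\Int U_y$ with $x \le y \ll p$, where every section space is $k$ and all transition maps are identities. Hence $p \in \Supp(j_*k[x,p]) \subseteq \Supp(\mathcal{L})$.

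($\Rightarrow$) Suppose $p \in \Supp(\mathcal{L})$. By the preceding proposition, $k_Z \in \mathcal{L}$ with $Z = U \cap C \ni p$, where $U$ is Scott-open and $C$ is Scott-closed.
\begin{enumerate}[(1)]
    \item Since the sets $\Int U_x$ with $x \ll p$ form a neighbourhood basis of $p$ in the Scott topology of a continuous poset, choose $x \ll p$ with $\Int U_x \subseteq U$.
    \item Since $C$ is a down-set containing $p$, we have $D_p \subseteq C$. Hence $Z_{x,p} \subseteq Z$.
    \item The set $Z \cap \Int U_x$ is open in $Z$, so $k_{Z \cap \Int U_x}$ is a subsheaf of $k_Z$. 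Inside $Z \cap \Int U_x$ the subset $Z_{x,p}$ is closed, since it is cut out by $D_p$. Therefore $k_{Z_{x,p}}$ is a quotient of $k_{Z \cap \Int U_x}$, using the exact sequence $0 \to k_{W \setminus F} \to k_W \to k_F \to 0$ for $F$ closed in $W$.
    \item Since $\mathcal{L}$ is a Serre subcategory, $k_{Z_{x,p}} \in \mathcal{L}$, and by the claim $j_*k[x,p] \in \mathcal{L}$.
\end{enumerate}
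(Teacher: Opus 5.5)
Your proof is correct and follows the paper's argument: you use the preceding proposition to get some $k_Z\in\mathcal{L}$, shrink $Z$ to $\Int U_x\cap D_p$ with $x\ll p$ to obtain a subquotient, and read off the stalk at $p$ for the converse. The identification $j_*k[x,p]\cong k_{\Int U_x\cap D_p}$ that you flag as the main obstacle is used implicitly in the paper, and your sketch of it closes easily: any Scott neighbourhood $V$ of a point $z\in[x,p]\setminus\Int U_x$ contains some $y\ll z$, this $y$ cannot satisfy $y\ge x$ (otherwise $x\ll z$), and so the compatibility condition along $y\le p$ kills every section over $V$.
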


\begin{proof}
Suppose first that $p \in \Supp(\mathcal{L})$. By the previous proposition,
$j_*k[I] \in \mathcal{L}$ for some locally Scott-closed interval
$I \subseteq P$ containing $p$.
In particular, $\Int U_x \cap D_p \subseteq I$ for some $x \ll p$.
Therefore, $j_*k[x,p]$ is a subquotient of $j_*k[I]$, and hence $j_*k[x,p] \in \mathcal{L}$.

Conversely, if $j_*k[x, p] \in \mathcal{L}$ for some $x \ll p$, then $p \in \Supp(\mathcal{L})$, because $j_*k[x,p]_p = k \not= 0$, 
so $p \in \Supp(j_*k[x,p]) \subseteq \Supp(\mathcal{L})$.
\end{proof}

\begin{proposition}\label{Prop:IndicatorsInLocalizingSubcategory3}
Let $P = \Rset \cup \{ \infty \}$, and let $\mathcal{L}$ be a localizing subcategory of $\Sh(P^{\sigma})$. 
\begin{enumerate}[(i)]
    \item If $p, q \in \Rset \cup \{ \infty \}$ such that $p \ll q$ and $\left] p, q \right] \subseteq \Supp(\mathcal{L})$, then $j_*k[p,q] \in \mathcal{L}$; 

    \item If $I \subseteq \Rset \cup \{ \infty \}$ is a locally Scott-closed interval such that $I \subseteq \Supp(\mathcal{L})$, then $j_*k[I] \in \mathcal{L}$.  
\end{enumerate}
\end{proposition}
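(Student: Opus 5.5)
Let $P = \Rset \cup \{\infty\}$ with the Scott topology, so that Scott-open sets are the intervals $\left]a,\infty\right]$ together with $\emptyset$ and $P$, and $p \ll q$ just means $p < q$ (or $p = q$ is the bottom, which does not occur here). The sheaf $j_*k[p,q]$ should be read as the Scott sheaf whose stalks are $k$ exactly on $\left]p,q\right]$, i.e.\ $k_{\left]p,q\right]}$ with $\left]p,q\right] = \Int U_p \cap D_q$ locally closed; this is the meaning forced by Lemma \ref{Lemma:PointInSupportImpliesInterval2}. The plan for (i) is a compactness-type argument: cover $\left]p,q\right]$ by small intervals whose indicators lie in $\mathcal{L}$, then glue them using closure of $\mathcal{L}$ under extensions.

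For (i), fix $t \in \left]p,q\right] \subseteq \Supp(\mathcal{L})$. By Lemma \ref{Lemma:PointInSupportImpliesInterval2} there is $x_t < t$ with $j_*k[x_t,t] \in \mathcal{L}$. Since $\mathcal{L}$ is closed under subquotients, the sheaves $k_{\left]a,b\right]}$ with $x_t \le a < b \le t$ also lie in $\mathcal{L}$: the subsheaf of $k_{\left]x_t,t\right]}$ cut out by the open set $\left]a,\infty\right]$ is $k_{\left]a,t\right]}$, and its quotient by the subsheaf on $\left]b,\infty\right]$ is $k_{\left]a,b\right]}$. Let
\[
  S = \{ s \in \left[p,q\right] \mid k_{\left]p,s\right]} \in \mathcal{L} \},
\]
where $k_{\left]p,p\right]} = 0$, so $p \in S$. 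Extension closure shows that $S$ is an interval downward from $p$: the short exact sequence $0 \to k_{\left]s,s'\right]} \to k_{\left]p,s'\right]} \to k_{\left]p,s\right]} \to 0$ (open part first) gives $s' \in S$ when $s \in S$ and $k_{\left]s,s'\right]} \in \mathcal{L}$.

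Let $m = \sup S$; the goal is $m = q$ and $m \in S$. If $m < q$, or if $m = q$, pick the interval $\left]x_m,m\right]$ attached to the point $m$. When $m > p$, choose $s \in S$ with $s > x_m$; then $k_{\left]s,m\right]} \in \mathcal{L}$, and the extension step gives $m \in S$. When $m = p$ this step is vacuous. If $m < q$, then $m \in \left]p,q\right]$ or $m = p$, and the point $t$ slightly above $m$ is handled as follows. Choose $t \in \left]m,q\right]$ with $x_t < m$. This is the \emph{main obstacle}: Lemma \ref{Lemma:PointInSupportImpliesInterval2} gives, for each point, only a left neighbourhood. So I would instead run the argument from the top down. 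Define
\[
  T = \{ s \in \left[p,q\right] \mid k_{\left]s,q\right]} \in \mathcal{L} \},
\]
and let $m = \inf T$. Using the left neighbourhood of $q$ shows $T \ne \{q\}$. For $m > p$, use the neighbourhood $\left]x_m,m\right]$ and elements of $T$ near $m$ (the infimum is approached from above); combined with extension (the closed piece $k_{\left]m,s\right]}$, obtained as a subquotient, and the open piece $k_{\left]s,q\right]}$), this yields $x_m' \in T$ for some $x_m' < m$, contradicting $m = \inf T$. Hence $m = p$. Then, if $p \notin T$, pick $s_n \downarrow p$ in $T$. Now $k_{\left]p,q\right]}$ is the directed union of its subsheaves $k_{\left]s_n,q\right]}$ over the opens $\left]s_n,\infty\right]$, and $\mathcal{L}$ is closed under direct sums and quotients, hence under directed unions. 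So $p \in T$, which proves (i).

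For (ii), a locally Scott-closed interval $I \subseteq P$ has the form $\left]p,q\right]$, $\left]p,\infty\right]$ with $p \in P$ or $p = -\infty$, or $\left]-\infty,q\right]$, or $P$. For bounded-below intervals, (i) applies directly (with $q = \infty$ allowed). For $I = \left]-\infty,q\right]$ or $P$, write $k_I$ as the directed union of $k_{\left]-n,q\right]}$, each in $\mathcal{L}$ by (i), and conclude again by closure of $\mathcal{L}$ under directed unions.
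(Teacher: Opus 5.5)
Your final strategy (the set $T$, its infimum $m$, gluing with the left neighbourhood from Lemma~\ref{Lemma:PointInSupportImpliesInterval2}, and closure of $\mathcal{L}$ under directed unions) is the paper's strategy, and your treatment of (ii) is fine. However, the step that rules out $m>p$ has a gap. If $m\notin T$, you need $k_{]m,s]}\in\mathcal{L}$ for some $s\in T$, and you say it is ``obtained as a subquotient''. It is not a subquotient of any sheaf you know to lie in $\mathcal{L}$ at that point, because $k_{]m,s]}$ is a subquotient of $k_{]a,b]}$ only when $a\le m$ and $s\le b$:
\begin{itemize}
\item $k_{]x_m,m]}$ lies to the left of $m$;
\item every $k_{]s',q]}$ with $s'\in T$ has $s'>m$;
\item the left neighbourhoods $]x_t,t]$ of points $t>m$ reach down to $m$ only if $x_t\le m$, and nothing guarantees this as $t\downarrow m$.
\end{itemize}
This is exactly the obstacle you diagnosed for $S$, now reappearing on the right of $m$.

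The repair is already in your proposal: apply the directed-union argument at $m$, not only at $p$. If $m\notin T$, choose $s_n\downarrow m$ in $T$. Then $k_{]m,q]}=\bigcup_n k_{]s_n,q]}\in\mathcal{L}$, so $m\in T$. Now take $x=\max(x_m,p)<m$; note $k_{]x,m]}\in\mathcal{L}$ as a subquotient of $k_{]x_m,m]}$. The exact sequence $0\to k_{]m,q]}\to k_{]x,q]}\to k_{]x,m]}\to 0$ then gives $x\in T$, contradicting $m=\inf T$ whenever $m>p$. This is the order of the paper's proof. It first shows $a=\inf A\in A$ via $j_*k[a,q]=\varinjlim_{a<x\le q}j_*k[x,q]$, and only then proves $a=p$. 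With this reordering, your separate final step at $p$ and the abandoned attempt with $S$ become unnecessary.
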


\begin{proof}
(i) 
Consider the set
\[
  A = \{ x \in [p,q] \mid j_*k[x,q] \in \mathcal{L} \}.
\]
First, $A$ is non-empty, since $q \in A$ (indeed, $j_*k[q,q]=0\in\mathcal{L}$). Moreover, $A$ is bounded below. Hence, by the completeness of $\Rset$, $A$ has an infimum, which we denote by $a = \inf A$.

We first show that $j_*k[a,q] \in \mathcal{L}$. Since $a = \inf A$, every point $x$ satisfying $a < x \le q$ belongs to $A$. 
Thus, $j_*k[x,q] \in \mathcal{L}$ for every $a < x \le q$. 
These sheaves form a directed system $(j_*k[x,q])_{a < x \le q}$, and
\[
  j_*k[a,q] = \varinjlim_{a < x \le q} j_*k[x,q].
\]
Since $\mathcal{L}$ is closed under direct limits as a localizing subcategory, it follows that
$j_*k[a,q]\in\mathcal{L}$. 
In particular, $a \in A$. 

It remains to prove that $a = p$. 
Suppose, to the contrary, that $a > p$. 
Then $a \in \left] p,q \right] \subseteq \Supp(\mathcal{L})$.
By Lemma \ref{Lemma:PointInSupportImpliesInterval2}, there exists $y < a$ such that $j_*k[y,a] \in \mathcal{L}$.
Since
$\left] y, q \right] = \left] y, a \right] \cup \left] a , q \right]$, there is a short exact sequence
\[
  0 \to j_*k[a,q] \to j_*k[y, q] \to j_*k[y,a] \to 0. 
\]
Because $\mathcal{L}$ is closed under extensions, $j_*k[y,q]\in\mathcal{L}$. 
Hence $y \in A$, contradicting the fact that $a = \inf A$, since $y < a$.
Therefore $p = a \in A$, and consequently $j_*k[p,q]\in\mathcal{L}$.

(ii) 
By the previous item, the claim has already been established for all bounded locally Scott-closed intervals. Thus, it remains only to consider the unbounded intervals $I= \left] -\infty , q \right]$, where $q\in\Rset\cup\{\infty\}$.

For every $x\in I$, we have
\[
  \left] x,q \right] \subseteq I \subseteq \Supp(\mathcal L).
\]
Hence, (i) yields $j_*k[x,q]\in\mathcal L$.
Moreover, the sheaves $(j_*k[x,q])_{x\in I}$ form a directed system, and
\[
  j_*k[I] = \varinjlim_{x\in I}j_*k[x,q].
\]
Since $\mathcal L$ is closed under direct limits, it follows that $j_*k[I]\in\mathcal L$.
\end{proof}

We can now show that when $P = \Rset \cup \{ \infty \}$, a localizing subcategory is completely determined by its support.

\begin{theorem}\label{Thm:OneSideInverseForLocSubCat}
Let $P = \Rset \cup \{ \infty \}$, and let $\mathcal{L}$ be a localizing subcategory of $\Sh(P^{\sigma})$. 
Then 
\[
  \mathcal{L} = (\Supp^{-1} \circ \Supp)(\mathcal{L}). 
\]
\end{theorem}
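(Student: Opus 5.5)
The inclusion $\mathcal{L} \subseteq (\Supp^{-1} \circ \Supp)(\mathcal{L})$ holds for any Galois connection, so it suffices to prove the reverse inclusion. Let $\mathcal{F}$ be a sheaf on $P^\sigma$ with $\Supp(\mathcal{F}) \subseteq \Supp(\mathcal{L})$. We must show that $\mathcal{F} \in \mathcal{L}$. The strategy is to show that $\mathcal{F}$ is built from sheaves of the form $k_Z \cong j_*k[I]$, where $I \subseteq \Supp(\mathcal{F})$ is a locally Scott-closed interval. Proposition \ref{Prop:IndicatorsInLocalizingSubcategory3}(ii) places each such $j_*k[I]$ in $\mathcal{L}$, and the closure properties of $\mathcal{L}$ then give $\mathcal{F} \in \mathcal{L}$.

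\textbf{Reduction to cyclic subsheaves.} By the description of the subsheaf generated by a set of sections, $\mathcal{F}$ is the image of $\bigoplus_{i} k_{U_i} \to \mathcal{F}$ over all sections $s_i \in \mathcal{F}(U_i)$. Equivalently, $\mathcal{F} = \sum_i \langle s_i \rangle$, a quotient of $\bigoplus_i \langle s_i\rangle$. Here $\langle s_i \rangle$ denotes the subsheaf generated by $s_i$ on $U_i$, extended by zero. Since $\mathcal{L}$ is closed under direct sums and quotients, it suffices to show that each $\langle s \rangle \cong k_{\supp(s)} \in \mathcal{L}$ for a section $s \in \mathcal{F}(U)$ with $U$ Scott-open. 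Note that $\supp(s) \subseteq \Supp(\mathcal{F}) \subseteq \Supp(\mathcal{L})$, and $\supp(s)$ is locally closed in $P^\sigma$, i.e.\ $\supp(s) = U \cap C$ with $U = \left]a,\infty\right]$ (or $P$) and $C$ a Scott-closed down-set.

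\textbf{Identifying locally closed sets.} In $P = \Rset\cup\{\infty\}$ with the Scott topology, the open sets are $\left]a,\infty\right]$ and $P$, and the closed sets are $\left]-\infty,b\right]$ (including $\emptyset$ and $P$). Every closed set is therefore a down-set with a maximum or is all of $P$, since the Scott-closed sets are down-sets closed under directed sups and $\infty$ is the sup of $\Rset$. Consequently every locally closed subset is an interval $\left]a,b\right]$ or $\left]-\infty,b\right]$, that is, a locally Scott-closed interval $I$. Hence $k_{\supp(s)} = j_*k[I]$ with $I \subseteq \Supp(\mathcal{L})$, and Proposition \ref{Prop:IndicatorsInLocalizingSubcategory3}(ii) gives $k_{\supp(s)} \in \mathcal{L}$. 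Concluding as above, $\mathcal{F} \in \mathcal{L}$.

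\textbf{Main obstacle.} The only delicate point is matching the paper's notation $j_*k[I]$ with $k_Z$ for $Z = I$ locally Scott-closed. Concretely, one must check that the Scott sheaf associated to the indicator module $k[I]$ coincides with the extension by zero of the constant sheaf on $I$. One must also confirm that the edge cases $I = P$ and $I=\emptyset$ are covered: $\emptyset$ is trivial, and $P = \left]-\infty,\infty\right]$ is handled by part (ii). Everything else is formal from the closure of $\mathcal{L}$ under sums and quotients.
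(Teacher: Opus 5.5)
Your proposal is correct and follows essentially the same route as the paper. Both proofs write $\mathcal{F}$ as a quotient of a direct sum of sheaves $k_{\supp(s)} \cong j_*k[I]$, where each $I$ is a locally Scott-closed interval contained in $\Supp(\mathcal{L})$, then apply Proposition \ref{Prop:IndicatorsInLocalizingSubcategory3}(ii) and use closure of $\mathcal{L}$ under direct sums and quotients. The differences are cosmetic: the paper indexes the sum by pairs (point, non-zero germ) and checks surjectivity on stalks, while you sum over all sections, so surjectivity is immediate; and your explicit description of the locally closed subsets of $P^\sigma$ as the intervals $]a,b]$ and $]-\infty,b]$ fills in a step the paper leaves implicit.
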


\begin{proof}
Recall first that $\mathcal{L} \subseteq (\Supp^{-1} \circ \Supp)(\mathcal{L})$. 
Suppose now that $\mathcal{F} \in (\Supp^{-1} \circ \Supp)(\mathcal{L})$. 
Equivalently, $\Supp(\mathcal{F}) \subseteq \Supp(\mathcal{L})$. 
Fix $p \in \Supp(\mathcal{F})$ and a non-zero germ $s_p \in \mathcal{F}_p$. 
By Lemma \ref{Lemma:SectionInduceMonomorf}, there exist a locally Scott-closed interval $I_{s,p} \subseteq P$ that contains $p$ and a morphism $\varphi \colon j_*k[I_{s,p}] \to \mathcal{F}$, where $\varphi_p(1) = s_p \not= 0$. 
Thus, 
\[
  I_{s,p} = \supp(s) \subseteq \Supp(\mathcal{F}) \subseteq \Supp(\mathcal{L}). 
\]
It follows from Proposition \ref{Prop:IndicatorsInLocalizingSubcategory3} (ii) that $j_*k[I_{s,p}] \in \mathcal{L}$. 
Repeating this construction for every non-zero germ at $p$ yields a morphism
\[
  \bigoplus_{0 \not= s_p \in \mathcal{F}_p} j_*k[I_{s,p}] \to \mathcal{F}
\]
that is surjective on the stalk at $p$. 
Taking the direct sum over all points of $\Supp(\mathcal{F})$, we obtain an epimorphism 
\[
  \bigoplus_{p \in \Supp(\mathcal{F})} \bigoplus_{0 \not= s_p \in \mathcal{F}_p} j_*k[I_{s,p}] \to \mathcal{F}. 
\]
Since $\mathcal{L}$ is closed under arbitrary direct sums, the domain of this morphism is in $\mathcal{L}$. 
As $\mathcal{L}$ is also closed under quotients, it follows that $\mathcal{F} \in \mathcal{L}$.
Hence, $(\Supp^{-1} \circ \Supp)(\mathcal{L}) \subseteq \mathcal{L}$. Together with the reverse inclusion, this proves the claim.
\end{proof}

\begin{corollary}\label{Cor:LocalizingSubCatAndSkulaOpenR}
Let $P = \Rset \cup \{ \infty \}$. 
Then there are one-to-one correspondences 
\[
  \left\{ \begin{tabular}{c} Localizing \\ subcategories of $\Sh(P^{\sigma})$ \end{tabular}  \right\} 
  \xrightleftharpoons[\Supp^{-1}]{\Supp}  
  \left\{ \begin{tabular}{c} Skula-open \\ sets of $P^{\sigma}$ \end{tabular}  \right\} 
  \xrightleftharpoons[\sky^{-1}]{\sky} 
  \left\{ \begin{tabular}{c} Gabriel open \\ sets of $\Sp(P^{\sigma})$ \end{tabular}  \right\}.
\]
\end{corollary}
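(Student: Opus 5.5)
We must show the two displayed bijections for $P=\Rset\cup\{\infty\}$. For the left correspondence, we use the Galois connection of Remark~\ref{Rmk:SkulaOpenByLocSubCat}, restricted so that the codomain consists of Skula-open subsets of $P^\sigma$. By Proposition~\ref{Prop:SkulaOpenCorrespondsSupports}, $\Supp(\mathcal{L})$ is Skula-open for every localizing subcategory $\mathcal{L}$. Also, $\Supp^{-1}(Z)$ is localizing for every subset $Z$. So both maps are well defined between the two sets. Proposition~\ref{Prop:OneSideInverseForSuppSets} gives $\Supp\circ\Supp^{-1}=\id$ on Skula-open sets. Theorem~\ref{Thm:OneSideInverseForLocSubCat} gives $\Supp^{-1}\circ\Supp=\id$ on localizing subcategories. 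Hence $\Supp$ and $\Supp^{-1}$ are mutually inverse bijections.

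For the right correspondence, we first note that $P^\sigma$ is sober. This holds because $P=\Rset\cup\{\infty\}$ is a continuous dcpo; the only directed sets lacking a supremum in $\Rset$ are those unbounded above, and these have supremum $\infty$ in $P$. We then invoke \cite[Proposition 8.2.12 (b)]{goubault-larrecq_2013}. By Proposition~\ref{Prop:SkyInjBij}~(ii), $\sky\colon P\to\InjSpec(P^\sigma)$ is a bijection. By Theorem~\ref{Thm:ForSobSpaceGabrielIsSkula}, $\sky\colon\Sk(P^\sigma)\to\Sp(P^\sigma)$ is a homeomorphism. A homeomorphism induces a bijection of open sets, namely $Z\mapsto\sky(Z)$ with inverse $U\mapsto\sky^{-1}(U)$. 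The Skula-open subsets of $P^\sigma$ are precisely the open sets of $\Sk(P^\sigma)$, and the Gabriel open sets are the open sets of $\Sp(P^\sigma)$.

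Finally, we check that the two correspondences are compatible with the natural maps $\mathcal{L}\mapsto\mathcal{O}(\mathcal{L})$. Proposition~\ref{Prop:SkyContMap}~(i) gives $\sky^{-1}(\mathcal{O}(\mathcal{L}))=\Supp(\mathcal{L})$. Since $\sky$ is bijective, this means $\sky(\Supp(\mathcal{L}))=\mathcal{O}(\mathcal{L})$. So the composite bijection sends $\mathcal{L}$ to its basic Gabriel open set $\mathcal{O}(\mathcal{L})$. In particular, every Gabriel open set is basic, and $\mathcal{L}\mapsto\mathcal{O}(\mathcal{L})$ is injective.

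The only point requiring care is the sobriety of $P^\sigma$. Everything substantive, namely the injectivity of $\Supp$, is already contained in Theorem~\ref{Thm:OneSideInverseForLocSubCat}. So I expect no genuine obstacle beyond confirming that the hypotheses of the cited earlier results are met.
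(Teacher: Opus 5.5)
Your proposal is correct and follows the paper's own argument. The first bijection comes from combining Proposition~\ref{Prop:OneSideInverseForSuppSets} with Theorem~\ref{Thm:OneSideInverseForLocSubCat}, and the second from the homeomorphism of Theorem~\ref{Thm:ForSobSpaceGabrielIsSkula}. Two details the paper leaves implicit are made explicit in your version: the check that $P^\sigma$ is sober because $P$ is a continuous dcpo, and the compatibility $\sky(\Supp(\mathcal{L}))=\mathcal{O}(\mathcal{L})$.
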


\begin{proof}
For the first bijection, combine Proposition \ref{Prop:OneSideInverseForSuppSets} and Theorem \ref{Thm:OneSideInverseForLocSubCat}. 
The second one is a consequence of Theorem \ref{Thm:ForSobSpaceGabrielIsSkula}. 
\end{proof}

%%%%%%%%%%%%%%%%%%%%%%%%%%%%%%%%%%%%%%%%%%%%%%%%%%%%%%%%%%
\subsection{Induced Ziegler Spectra}

In this subsection $P = \Rset$. 
Lerch \cite{LerchSpectrum} showed that the category of persistence modules $\Fun(\Rset, \Mod)$ is locally coherent, and that its Ziegler spectrum is homeomorphic to $\Idl(\Rset)$ endowed with the order topology (i.e.\ the topology generated by the rays $\{ D \mid D \subsetneq F \}$ and $\{ D \mid F \subsetneq D \}$, where $F \in \Idl(\Rset)$). For brevity, we write $\Zg(\Rset^a)$ for the Ziegler spectrum of $\Fun(\Rset, \Mod)$.

In contrast, it is easy to see that $\Sh(\Rset^{\sigma})$ is neither locally coherent nor even locally finitely presented.

\begin{proposition}\label{Prop:Easy}
Let $P = \Rset$. 
Then the following hold: 
\begin{enumerate}[(i)]
    \item the Scott sheaf $j_*k[p,q]$ is not finitely presented for any $p < q$; 
    \item $j^* \mathcal{F}$ is a finitely presented persistence module over $\Rset$, when $\mathcal{F}$ is a finitely presented Scott sheaf on $\Rset^{\sigma}$; 
    \item a Scott sheaf $\mathcal{F}$ on $\Rset^{\sigma}$ is finitely presented if and only if $\mathcal{F} = 0$; 
    \item $\Sh(\Rset^{\sigma})$ is not locally finitely presented. 
\end{enumerate}
\end{proposition}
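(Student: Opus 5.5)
The plan is to treat the four items in order. Items (i) and (iii) rest on explicit computations with indicator sheaves and semi-continuity. Item (ii) is an adjunction argument, and (iv) is a formal consequence of (iii). Throughout, I use the notation of Lemma~\ref{Lemma:PointInSupportImpliesInterval2}: $j_*k[x,q]$ denotes the Scott sheaf supported on $\left]x,q\right]$.

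For (i), fix $p<q$ and consider the subsheaves $j_*k[x,q]\subseteq j_*k[p,q]$ for $p<x\le q$. The set $\left]x,q\right]$ is open in the support $\left]p,q\right]$, so $j_*k[x,q]$ really is a subsheaf. These subsheaves form a directed system as $x$ decreases to $p$. Since $\bigcup_{x>p}\left]x,q\right]=\left]p,q\right]$, I expect
\[
  j_*k[p,q]=\varinjlim_{x\downarrow p} j_*k[x,q],
\]
which I would verify on stalks. If $\Hom(j_*k[p,q],-)$ commuted with this direct limit, the identity of $j_*k[p,q]$ would factor through some $j_*k[x,q]$ with $x>p$. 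That is impossible: the identity is non-zero on the stalk at any point of $\left]p,x\right]$, while $j_*k[x,q]$ vanishes there.

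For (ii), I would use the adjunction $j^*\dashv j_*$. Suppose that $j_*\colon \Fun(\Rset,\Vect)\to\Sh(\Rset^\sigma)$ preserves direct limits. Then for a finitely presented Scott sheaf $\mathcal F$ we get
\[
  \Hom(j^*\mathcal F,\varinjlim M_i)\cong\Hom(\mathcal F,\varinjlim j_*M_i)\cong\varinjlim\Hom(\mathcal F,j_*M_i)\cong\varinjlim\Hom(j^*\mathcal F,M_i),
\]
so $j^*\mathcal F$ is finitely presented. The main obstacle is showing that $j_*$ commutes with direct limits, since colimits of sheaves involve sheafification. I would first try to go through \cite[Theorem~3.18]{EphModAndScottShOverContPoset}, which realizes Scott sheaves as lower semi-continuous modules. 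Under that identification, $j_*$ should become the coreflection $M\mapsto\overline M$ with $\overline M_p=\varinjlim_{x\ll p}M_x$. This is built from filtered colimits, and filtered colimits commute with one another, so the coreflection should preserve direct limits.

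For (iii), let $\mathcal F\neq 0$ be finitely presented and set $M=j^*\mathcal F$. By (ii), $M$ is a finitely presented persistence module over $\Rset$, so it is generated by finitely many elements in degrees $a_1,\dots,a_m$. Put $a=\min a_i$. Then $M_x=0$ for all $x<a$, and $M_a\neq 0$, since $M\neq0$ and $j^*$ is faithful on the equivalence. On the other hand, $M$ is lower semi-continuous, so
\[
  M_a\cong\varinjlim_{x\ll a}M_x=\varinjlim_{x<a}M_x=0,
\]
which is a contradiction. Finally, (iv) is immediate: if $\Sh(\Rset^\sigma)$ were locally finitely presented, every Scott sheaf would be a direct limit of finitely presented ones. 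By (iii) these are all zero, so every sheaf would vanish, which is absurd since $\sky(0)\neq 0$.
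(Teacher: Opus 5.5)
Your proposal is correct. For (i), (ii) and (iv) it follows the paper's line.

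In (i) the paper uses the same system $(j_*k[x,q])_{p<x<q}$. It shows $\Hom(j_*k[p,q],j_*k[x,q])=\Hom(k[p,q],k[x,q[)=0$ through the adjunction $j_*\dashv j^!$. Your stalk argument at a point of $]p,x]$ does the same job without needing $j^!$.

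In (ii), the step you flag as the main obstacle is immediate in the paper. There $j_*$ has the right adjoint $j^!$, so it preserves all colimits. Your detour via $j^*j_*M\cong\overline M$ also works, using interpolation to get $\varinjlim_{x\ll p}\varprojlim_{y\gg x}M_y\cong\varinjlim_{x\ll p}M_x$, but it is not needed.

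The real difference is in (iii). The paper invokes Lerch's classification to write $j^*\mathcal F\cong\bigoplus_{i=1}^n k[p_i,q_i[$. It then uses $\mathcal F\cong j_*j^*\mathcal F\cong\bigoplus_i j_*k[p_i,q_i]$ and concludes from (i), so (iii) becomes a consequence of the explicit non-example. You use only two facts: finitely presented modules are finitely generated, and $j^*\mathcal F$ is lower semi-continuous. A finitely generated module vanishes strictly below its lowest non-zero generator, which contradicts $M_a\cong\varinjlim_{x<a}M_x$. This is more elementary and independent of both Lerch's structure theorem and item (i). It also isolates the actual obstruction: finitely generated modules are ``closed on the left'', while lower semi-continuous ones are ``open on the left''.

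One small repair is needed. Take $a$ to be the least degree of a non-zero generator; one exists since $M\neq0$. With an arbitrary generating set, $M_a$ may vanish, and faithfulness of $j^*$ does not by itself give $M_a\neq0$.
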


Recall that in \cite[Proposition 3.4]{EphModAndScottShOverContPoset}, we observed that the direct image functor $j_* \colon \Sh(P^a) \to \Sh(P^{\sigma})$ along the identity map $j \colon P^a \to P^{\sigma}$ admits both a left and a right adjoint, denoted by $j^*$ and $j^!$, respectively.

\begin{proof}
(i) 
Consider the directed system $(j_*k[x,q])_{p < x < q}$ of subsheaves of $j_*k[p,q]$. 
Clearly, 
\[
  \Hom(j_*k[p,q], j_*k[x,q]) 
  = \Hom(k[p,q], j^!j_*k[x,q]) 
  = \Hom(k[p,q], k[x,q[) 
  = 0
\]
for all $p < x < q$. 
But 
\[
  \Hom \left(j_*k[p,q], \varinjlim_{p < x < q} j_*k[x,q] \right) 
  = \Hom(j_*k[p,q], j_*k[p,q]) 
  \cong k. 
\]
Hence, $j_*k[p,q]$ is not finitely presented.

(ii) 
Let $\mathcal{F}$ be a finitely presented Scott sheaf on $\Rset$, and let $(M_{i})_{i \in I}$ be a directed system of persistence modules over $\Rset$. 
Since $(j^*, j_*, j^!)$ is an adjoint triple and $j_*$ preserves direct limits, we have 
\begin{align*}
  \Hom \left(j^* \mathcal{F}, \varinjlim_{i \in I} M_i \right) 
  &= \Hom \left(\mathcal{F}, j_* \varinjlim_{i \in I} M_i \right) 
  = \Hom \left(\mathcal{F}, \varinjlim_{i \in I} j_*M_i \right) \\
  &= \varinjlim_{i \in I} \Hom(\mathcal{F}, j_*M_i) 
  = \varinjlim_{i \in I} \Hom(j^*\mathcal{F}, M_i). 
\end{align*}

(iii) 
Let $\mathcal{F}$ be a finitely presented Scott sheaf on $\Rset$. 
By (ii), $j^* \mathcal{F}$ is a finitely presented persistence module over $\Rset$. 
Lerch showed that $j^* \mathcal{F}$ must then be of the form $\bigoplus_{i = 1}^n k[p_i, q_i[$ for some $p_i < q_i$ ($i = 1, \dots, n$) (\cite[Lemma 3.6]{LerchSpectrum}). 
Thus, 
\[
  \mathcal{F} 
  = j_* j^* \mathcal{F} 
  = j_* \bigoplus_{i = 1}^n k \left[ p_i, q_i \right[ 
  = \bigoplus_{i = 1}^n j_*k[p_i, q_i]. 
\]
By (i), this cannot be finitely presented unless $\mathcal{F} = 0$. 

(iv) This is an immediate consequence of (iii).
\end{proof}

As a consequence of the previous proposition, there is no true Ziegler spectrum for Scott sheaves on $\Rset^{\sigma}$. 
Nevertheless, by \cite[Theorem~3.18]{EphModAndScottShOverContPoset}, the category of Scott sheaves is equivalent to the category of upper and lower semicontinuous persistence modules. Hence we may regard $\InjSpec(\Rset^\sigma)$ as a subset of $\InjSpec(\Rset^a)$ and endow it with the induced subspace topology. 

There are, in fact, two natural ways to do this, corresponding to upper and lower semicontinuous persistence modules. 
We define \[ \Zg_c(\Rset^\sigma) \coloneq \Zg(\Rset^a)\cap\Fun_c(\Rset,\Mod) \] and \[ \Zg^c(\Rset^\sigma) \coloneq \Zg(\Rset^a)\cap\Fun^c(\Rset,\Mod). \] We refer to these spaces as the \emph{open induced Ziegler spectrum} and the \emph{closed induced Ziegler spectrum}, respectively.

Identifying $\Zg(\Rset^a)$ with the homeomorphic space $\Idl(\Rset)$ as in \cite[Theorem~7.2]{LerchSpectrum}, we see that the underlying set of $\Zg_c(\Rset^\sigma)$ (resp.\ $\Zg^c(\Rset^\sigma)$) consists precisely of the ideals of $\Rset$ that are open (resp.\ closed) in the standard topology. Besides $\Rset$ itself, the closed ideals of $\Rset$ are the intervals $]-\infty,x]$ for $x\in\Rset$, whereas the open ideals are the intervals $]-\infty,x[$ for $x\in\Rset$.

Thus both $\Zg_c(\Rset^\sigma)$ and $\Zg^c(\Rset^\sigma)$ may be identified with the poset $\Rset\cup\{\infty\}$. These identifications are given by the natural bijections 
\[
  I_c\colon \Rset\cup\{\infty\}\to \Zg_c(\Rset^\sigma) 
  \qquad\text{and}\qquad 
  I^c\colon \Rset\cup\{\infty\}\to \Zg^c(\Rset^\sigma), 
\] 
defined by 
\[
  I_c(x)=k]-\infty,x[ \qquad\text{and}\qquad I^c(x)=k]-\infty,x] 
\] 
for $x\in\Rset$, and 
\[ 
  I_c(\infty)=k[\Rset]=I^c(\infty). 
\]

\begin{lemma}\label{Lemma:IndZieglerSubbasis}
\hfill
\begin{enumerate}[(i)]
    \item The open induced Ziegler spectrum $\Zg_c(\Rset^{\sigma})$ is generated by the families of sets 
    
    \[\{ k\left]-\infty, x\right[ \in \Zg_c(\Rset^{\sigma}) \mid x \le p \} \quad \text{and} \quad \{ k\left]-\infty, x\right[ \in \Zg_c(\Rset^{\sigma}) \mid x > p \},\] where $p \in \Rset$; 
    \item The closed induced Ziegler spectrum $\Zg^c(\Rset^{\sigma})$ is generated by the families of sets 
    \[\{ k\left] -\infty, x\right] \in \Zg^c(\Rset^{\sigma}) \mid x < p \} \quad \text{and} \quad \{ k\left] -\infty, x\right] \in \Zg^c(\Rset^{\sigma}) \mid x \ge p \},\] where $p \in \Rset$. 
\end{enumerate}
\end{lemma}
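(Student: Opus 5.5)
The plan is to restrict Lerch's description of the Ziegler topology on $\Zg(\Rset^a) \approx \Idl(\Rset)$ to the two subsets. By \cite[Theorem~7.2]{LerchSpectrum}, this is the order topology on $\Idl(\Rset)$, with subbasis the rays $\{D \mid D \subsetneq F\}$ and $\{D \mid F \subsetneq D\}$ for $F \in \Idl(\Rset)$. A subbasis of a subspace topology is obtained by intersecting a subbasis of the ambient space with the subspace. So it suffices to compute the traces of these rays on the open ideals (for (i)) and on the closed ideals (for (ii)), and then to discard redundant or trivial traces.

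For (i), I will identify $\Zg_c(\Rset^\sigma)$ with $\{\,]-\infty,x[\;\mid x\in\Rset\}\cup\{\Rset\}$ via $I_c$. The ideals $F$ come in three types, and I will treat each one.
\begin{itemize}
\item For $F=\;]-\infty,p[$, the lower ray $\{D\subsetneq F\}$ meets the open ideals exactly in $\{]-\infty,x[ \mid x<p\}$. The upper ray $\{F\subsetneq D\}$ gives $\{]-\infty,x[ \mid x>p\}$, including $\Rset$ if we read $x=\infty$.
\item For $F=\;]-\infty,p]$, the lower ray traces to $\{]-\infty,x[ \mid x\le p\}$, and the upper ray to $\{]-\infty,x[ \mid x>p\}$.
\item For $F=\Rset$, the traces are everything except $\Rset$, and the empty set.
\end{itemize}
Next I will show that the sets of the form $x<p$ are unions of the sets $x\le p'$ with $p'<p$, and that the set of all proper open ideals is a union of the $x\le p$. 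Every trace is therefore a union of finite intersections of the two listed families, and conversely both listed families occur as traces. So the listed families generate exactly the subspace topology.

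Part (ii) is the mirror argument on the closed ideals $]-\infty,x]$ and $\Rset$.
\begin{itemize}
\item For $F=\;]-\infty,p[$, the lower ray gives $\{x<p\}$ and the upper ray gives $\{x\ge p\}$.
\item For $F=\;]-\infty,p]$, the lower ray gives $\{x<p\}$ and the upper ray gives $\{x>p\}$.
\end{itemize}
Here $\{x>p\}$ is the union of the sets $\{x\ge p'\}$ over $p'>p$, so the listed families again suffice.

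The main obstacle is bookkeeping rather than depth: I have to make sure the element $\Rset$, viewed as $x=\infty$, lands in the correct sets, and that the redundant traces really are unions of the listed ones. The redundancy uses density of $\Rset$: for instance, $\{x<p\}=\bigcup_{p'<p}\{x\le p'\}$ needs, for each $x<p$, a real $p'$ with $x\le p'<p$. I would also double-check that Lerch's identification of injectives with ideals matches $I_c$ and $I^c$ as stated, namely $k]-\infty,x[ \leftrightarrow \;]-\infty,x[$ and $k]-\infty,x]\leftrightarrow \;]-\infty,x]$.
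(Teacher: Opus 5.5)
Your proposal is correct and follows the paper's own proof. Like the paper, you intersect Lerch's subbasis of rays with the open (resp.\ closed) ideals, compute the traces for $F=\left]-\infty,p\right[$ and $F=\left]-\infty,p\right]$ (the paper treats $F=\Rset$ as the case $p=\infty$), and read off the generating families. You go slightly further than the paper in checking that leftover traces such as $\{x<p\}$ and $\{x>p\}$ are unions of the listed sets; there $p'=x$ already works, so no density argument is actually needed.
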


\begin{proof}
(i) 
The Ziegler topology of persistence modules is generated by the families of sets $\{ k[D] \in \InjSpec(\Rset^a) \mid D \subsetneq F \}$ and $\{ k[D] \in \InjSpec(\Rset^a) \mid F \subsetneq D \}$, where $F \subseteq \Rset$ is an ideal. 

Suppose first that $F$ is open in the standard topology, so that $F = \left]-\infty, p\right[$ for some $p \in \Rset \cup \{ \infty \}$. The intersections with $\Zg_c(\Rset^{\sigma})$ are then simply 
\begin{align*}
  \{ k[D] \in \InjSpec(\Rset^a) \mid D \subsetneq \left] -\infty, p\right[ \,\} \cap \Zg_c(\Rset^{\sigma}) 
  &= \{ \, k\left]-\infty, x\right[ \in \Zg_c(\Rset^{\sigma}) \mid \left]-\infty, x\right[ \subsetneq \left]-\infty, p\right[ \, \} \\
  &= \{ \, k\left]-\infty, x\right[ \in \Zg_c(\Rset^{\sigma}) \mid x < p \}
\end{align*}
and 
\begin{align*}
  \{ k[D] \in \InjSpec(\Rset^a) \mid \left] -\infty, p\right[ \subsetneq D \} \cap \Zg_c(\Rset^{\sigma}) 
  &= \{ k\left]-\infty, x\right[ \in \Zg_c(\Rset^{\sigma}) \mid \left]-\infty, p\right[ \subsetneq \left]-\infty, x\right[ \, \} \\
  &= \{ k\left]-\infty, x\right[ \in \Zg_c(\Rset^{\sigma}) \mid p < x \}. 
\end{align*}

Now suppose that $F=\left]-\infty,p\right]$ for some $p \in \Rset$. Then  
\begin{align*}
  \{ k[D] \in \InjSpec(\Rset^a) \mid \left] -\infty, p\right] \subsetneq D \} \cap \Zg_c(\Rset^{\sigma}) 
  &= \{ k\left]-\infty, x\right[ \in \Zg_c(\Rset^{\sigma}) \mid \left]-\infty, p\right] \subsetneq \left]-\infty, x\right[ \, \} \\
  &= \{ k\left]-\infty, x\right[\in \Zg_c(\Rset^{\sigma}) \mid p < x \} 
\end{align*}
and 
\begin{align*}
  \{ k[D] \in \InjSpec(\Rset^a) \mid D \subsetneq \left] -\infty, p\right] \, \} \cap \Zg_c(\Rset^{\sigma}) 
    &= \{ k\left]-\infty, x\right[ \in \Zg_c(\Rset^{\sigma}) \mid \left]-\infty, x\right[ \subseteq \left]-\infty, p\right[ \, \} \\ 
  &= \{ k\left]-\infty, x\right[ \in \Zg_c(\Rset^{\sigma}) \mid x \le p \}. 
\end{align*}

From these computations, we see that the families of sets \[ \{ k\left]-\infty, x\right[ \in \Zg_c(\Rset^{\sigma}) \mid x \le p \}\quad\text{and}\quad \{ k\left]-\infty, x\right[ \in \Zg_c(\Rset^{\sigma}) \mid x > p \}, \] where $p \in \Rset$, generate the open induced Ziegler topology on $\Zg_c(\Rset^{\sigma})$.

(ii) The argument is analogous.
\end{proof}

The generating families obtained in Lemma~\ref{Lemma:IndZieglerSubbasis} allow us to identify the open and closed induced Ziegler spectra with two natural topologies on $\Rset\cup\{\infty\}$.

\begin{theorem}\label{Thm:InducedZieglerSpectrumsHomeo} Let $P=\Rset$. 
\begin{enumerate}[(i)] 
    \item The map \[ I_c \colon \Rset\cup\{\infty\}\to\Zg_c(\Rset^\sigma) \] is a homeomorphism, where $\Rset\cup\{\infty\}$ is endowed with the Sorgenfrey topology, that is, the topology generated by the intervals $]x,y]$ with $x<y$. 
    
    \item The map \[ I^c \colon \Rset\cup\{\infty\}\to\Zg^c(\Rset^\sigma) \] is a homeomorphism, where $\Rset\cup\{\infty\}$ is endowed with the topology generated by the intervals $[x, \infty]$ ja $\left] - \infty , y\right[$. \end{enumerate} \end{theorem}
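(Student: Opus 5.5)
Since $I_c$ and $I^c$ are bijections by construction, the task is to compare topologies. For that it suffices to show that the preimages under $I_c$ (resp.\ $I^c$) of the generating sets of Lemma~\ref{Lemma:IndZieglerSubbasis} generate exactly the claimed topology on $\Rset\cup\{\infty\}$. First I would fix conventions: identify $k[\Rset]$ with the point $\infty$, so that $\infty > x$ for every $x\in\Rset$. I would also re-check the lemma's computation for the ideal $F=\Rset$ (equivalently $p=\infty$). That case contributes only $\{x\mid x<\infty\}=\Rset$ and the empty set, and these do not change the generated topology.

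For (i), the preimages of the subbasic sets are $I_c^{-1}\{\cdots\mid x\le p\}=\left]-\infty,p\right]$ and $I_c^{-1}\{\cdots\mid x>p\}=\left]p,\infty\right]$ for $p\in\Rset$. Finite intersections of these give $\left]p,q\right]$ for $p<q$ with $q\in\Rset$, together with the rays $\left]p,\infty\right]$ and $\left]-\infty,q\right]$. Each of these sets is a union of Sorgenfrey intervals $\left]x,y\right]$, with $y\in\Rset\cup\{\infty\}$ as in Theorem~\ref{Thm:GabrielSpecOfScottShOverR}; for instance $\left]-\infty,q\right]=\bigcup_{n}\left]q-n,q\right]$. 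Conversely, every $\left]x,y\right]$ is such an intersection, and $\left]x,\infty\right]$ is itself subbasic. Hence the two topologies coincide and $I_c$ is a homeomorphism.

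For (ii), the same computation gives the preimages $\left]-\infty,p\right[$ and $\left[p,\infty\right]$ for $p\in\Rset$. These are exactly the stated generators, so the statement follows at once.

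The only real obstacle is bookkeeping at the point $\infty$, and specifically the interval $\left]x,\infty\right]$. In (i) this interval must be open, and it is, because it is one of the subbasic rays. I also need $\infty$ not to be isolated in error: every open set containing $\infty$ contains some $\left]p,\infty\right]$, which matches the Sorgenfrey-type topology on $\Rset\cup\{\infty\}$ in which the basic neighbourhoods of $\infty$ are $\left]x,\infty\right]$. I would state this explicitly and then conclude.
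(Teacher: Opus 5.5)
Your proposal is correct and follows the paper's own proof. Both arguments pull the subbasic sets of Lemma~\ref{Lemma:IndZieglerSubbasis} back along the bijection $I_c$ (resp.\ $I^c$) and check that the resulting rays generate the stated topology on $\Rset\cup\{\infty\}$. Your explicit treatment of the point $\infty$ is more careful than the paper: the rays are $\left]p,\infty\right]$, which contain $I_c(\infty)=k[\Rset]$, whereas the paper writes $\left]p,\infty\right[$.
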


\begin{proof}
(i)
By Lemma~\ref{Lemma:IndZieglerSubbasis} (i), the generating families of $\Zg_c(\Rset^\sigma)$ correspond under $I_c$ to the families \[ \{\, \left]-\infty,p \right] \mid p\in\Rset\,\} \quad\text{and}\quad \{\, \left]p,\infty\right[\mid p\in\Rset\,\}, \] which generate the Sorgenfrey topology.
Hence $I_c$ is a homeomorphism.

(ii) The proof is analogous.
\end{proof}

\begin{corollary}\label{Cor:OpenIndZieglerHomeoGabriel}
Let $P = \Rset$. 
The open induced Ziegler spectrum and the Gabriel spectrum are homeomorphic: 
\[
  \Zg_c(\Rset^{\sigma}) \approx \Sp(\Rset^{\sigma}). 
\]
\end{corollary}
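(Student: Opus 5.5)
The plan is to compose two homeomorphisms already available in the paper, passing through $\Rset\cup\{\infty\}$ with its Sorgenfrey topology (generated by the intervals $]x,y]$).

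\textbf{Step 1: the Ziegler side.} Theorem~\ref{Thm:InducedZieglerSpectrumsHomeo}~(i) states that
\[
  I_c\colon \Rset\cup\{\infty\}\to \Zg_c(\Rset^\sigma)
\]
is a homeomorphism. Here the domain carries the Sorgenfrey topology generated by the intervals $]x,y]$ with $x<y$.

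\textbf{Step 2: the Gabriel side.} Theorem~\ref{Thm:GabrielSpecOfScottShOverR} with $n=1$ gives a homeomorphism
\[
  \Sp(\Rset^\sigma)\approx \Rset\cup\{\infty\}.
\]
Again the right-hand side carries the topology generated by the intervals $]x,y]$, with $x\in\Rset$ and $y\in\Rset\cup\{\infty\}$.

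Concretely, this homeomorphism is obtained as follows. Since $\Sob(\Rset^\sigma)=(\Rset\cup\{\infty\})^\sigma$, Theorem~\ref{Thm:ForSobSpaceGabrielIsSkula} shows that
\[
  \sky\colon \Sk\bigl((\Rset\cup\{\infty\})^\sigma\bigr)\to \Sp\bigl((\Rset\cup\{\infty\})^\sigma\bigr)
\]
is a homeomorphism. Remark~\ref{Rmk:SheavesOnSobEqv} then identifies the target with $\Sp(\Rset^\sigma)$. Finally, Proposition~\ref{Prop:BasisOfSkulaOfContPoset} identifies the Skula topology on the domain as the one generated by the intervals $]x,y]$.

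\textbf{Step 3: check that the two topologies agree.} The only real point is that the two descriptions of "the Sorgenfrey topology on $\Rset\cup\{\infty\}$" coincide. In Step 1 the subbasis is the rays $]-\infty,p]$ and $]p,\infty]$ for $p\in\Rset$, as in the proof of Theorem~\ref{Thm:InducedZieglerSpectrumsHomeo}. In Step 2 the basis is the intervals $]x,y]$ with $y$ possibly $\infty$. The two generate the same topology:
\begin{itemize}
  \item each $]x,y]=]x,\infty]\cap]-\infty,y]$ for finite $y$, and $]x,\infty]$ is itself a ray;
  \item conversely, $]-\infty,p]=\bigcup_{x<p}]x,p]$ and $]p,\infty]$ is itself a basic set.
\end{itemize}
I expect this bookkeeping to be the only obstacle, including checking that the point $\infty$ is handled identically on both sides. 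On the Ziegler side, $I_c(\infty)=k[\Rset]$. On the Gabriel side, $\infty$ is the generic point of the whole space, corresponding to $\sky(\infty)$.

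\textbf{Step 4: compose.} The composite
\[
  \Zg_c(\Rset^\sigma)\xrightarrow{I_c^{-1}}\Rset\cup\{\infty\}\xrightarrow{\ \sky\ }\Sp(\Rset^\sigma)
\]
is then a homeomorphism, which proves the claim.

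As a remark, it sends $k]-\infty,x[$ to the indecomposable injective Scott sheaf corresponding to $x$. Under the equivalence $\Sh(\Rset^\sigma)\simeq\Fun_c(\Rset,\Vect)$, this sheaf is exactly $k]-\infty,x[$, so the homeomorphism is the natural identification of points. Checking this is not needed for the bare homeomorphism claim.
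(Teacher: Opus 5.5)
Your proposal is correct and follows the paper's proof: both $\Zg_c(\Rset^\sigma)$ and $\Sp(\Rset^\sigma)$ are identified with $\Rset\cup\{\infty\}$ in the Sorgenfrey topology, via Theorems~\ref{Thm:InducedZieglerSpectrumsHomeo}~(i) and \ref{Thm:GabrielSpecOfScottShOverR}, and the two homeomorphisms are then composed. Your Step~3 check, that the rays $]-\infty,p]$, $]p,\infty]$ and the intervals $]x,y]$ (with $y=\infty$ allowed) generate the same topology, is correct bookkeeping that the paper leaves implicit, and your closing naturality remark is, as you say, not needed.
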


\begin{proof}
By Theorems \ref{Thm:GabrielSpecOfScottShOverR} and \ref{Thm:InducedZieglerSpectrumsHomeo} (i), both the Gabriel spectrum $\Sp(\Rset^{\sigma})$ and the open induced Ziegler spectrum $\Zg_c(\Rset^{\sigma})$ are homeomorphic to $\Rset\cup\{\infty\}$ endowed with the Sorgenfrey topology. 
\end{proof}

\begin{remark}
Let $P = \Rset$. 
Note that the Ziegler spectrum and the Gabriel spectrum of $\Fun(\Rset, \Mod)$ are not homeomorphic: 
\[
  \Zg(\Rset^a) \not\approx \Sp(\Rset^a). 
\]
The difference is caused by the ephemeral modules. 
Indeed, Proposition \ref{Prop:GabrielSpecOfDenseLinPoset} shows that the Gabriel spectrum contains many open points due to ephemeral modules. 
However, ephemeral modules are not finitely presented (\cite[Lemma 3.6]{LerchSpectrum}), and therefore they do not contribute to the Ziegler spectrum.
\end{remark}

%%%%%%%%%%%%%%%%%%%%%%%%%%%%%%%%%%%%%%%%%%%%%%%%%%%%%%%%%
%\bibliographystyle{plain}
%\bibliography{mybib}
\printbibliography

\end{document}